\tikzstyle{v} = [circle, draw, inner sep=2pt, minimum size=3pt, fill=black]
\tikzstyle{l} = [rectangle, draw, rounded corners]
\theoremstyle{plain}
\newtheorem{theorem}{Theorem}[section]
\newtheorem{lemma}[theorem]{Lemma}
\newtheorem{proposition}[theorem]{Proposition}
\theoremstyle{definition}
\newtheorem{definition}[theorem]{Definition}
\newtheorem{conjecture}[theorem]{Conjecture}
\newtheorem{problem}[theorem]{Problem}
\newtheorem{remark}[theorem]{Remark}
\DeclareMathOperator{\Sym}{Sym}
\DeclareMathOperator{\SSym}{SSym}
\DeclareMathOperator{\type}{type}
\DeclareMathOperator{\codim}{codim}
\title {Chromatic Signed-Symmetric Functions of Signed Graphs}
\author{
Masamichi Kuroda
\thanks{Faculty of Engineering, Nippon Bunri University, Oita 870-0316, Japan. 
E-mail:kurodamm@nbu.ac.jp}
\and
Shuhei Tsujie
\thanks{Department of Mathematics, Hokkaido University of Education, Asahikawa, Hokkaido 070-8621, Japan. 
E-mail:tsujie.shuhei@a.hokkyodai.ac.jp}
}
\date{}
\begin{document}
\maketitle
	
\begin{abstract}
\textcolor{red}{One of the main theorems (Theorem \ref{main theorem reciprocity}) was proved by Wolfgang \cite{wolfgang1997two} in 1997.} 
Stanley introduced the chromatic symmetric function of a simple graph, which is a generalization of a chromatic polynomial. 
This is expressed in terms of the integer points of the complements of the corresponding graphic arrangement. 
Stanley proved a combinatorial reciprocity theorem for chromatic functions. 
This is considered as an Ehrhart-type reciprocity theorem for the graphic arrangement. 

We introduce the chromatic signed-symmetric function of a signed graph, an analogue of the chromatic symmetric function, by the integer points of the complements of the corresponding signed-graphic arrangement and prove a generalization of Stanley's reciprocity theorem. 

Stanley has conjectured that the chromatic symmetric function distinguishes trees. 
This conjecture is also generalized for signed trees. 
We verify the conjecture for certain classes of signed paths.
\end{abstract}

{\footnotesize \textit{Keywords}: 
signed graph,
coloring, 
hyperplane arrangement, 
combinatorial reciprocity
}

{\footnotesize \textit{2020 MSC}: 
05C22, 
05C15, 
52C35, 
05B35 
}

\tableofcontents

\section{Introduction}

\subsection{Background}
Let $ \Gamma = (V_{\Gamma}, E_{\Gamma}) $ be a simple graph with vertex set $ V_{\Gamma} $ and edge set $ E_{\Gamma} $. 
We call a map $ \kappa \colon V_{\Gamma} \rightarrow \mathbb{Z}_{>0} $ a \textbf{coloring} of $ \Gamma $. 
A coloring $ \kappa $ is said to be \textbf{proper} if $ \kappa(u) \neq \kappa(v) $ whenever $ \{u,v\} \in E_{\Gamma} $. 

We define a function $ \chi_{\Gamma} $ by for every nonnegative integer $ n $
\begin{align*}
\chi_{\Gamma}(n) \coloneqq \#\Set{\kappa \colon V_{\Gamma} \rightarrow [n] | \kappa \text{ is proper. }}, 
\end{align*}
where $ [n] $ denotes the set $ \{1, \dots, n\} $. 
It is well known that there exists a unique monic polynomial in $ \mathbb{Z}[t] $ such that its evaluation at $ t=n $ coincides with $ \chi_{\Gamma}(n) $. 
Let $ \chi_{\Gamma}(t) $ denote the polynomial and it is called \textbf{chromatic polynomial} of $ \Gamma $. 

For example, we have $ \chi_{K_{3}}(t) = t(t-1)(t-2) $ and $ \chi_{P_{3}}(t) = t(t-1)^{2} $, where $ K_{\ell} $ and $ P_{\ell} $ denote the complete graph and the path on $ \ell $ vertices. 
There are non-isomorphic graphs having the same chromatic polynomial. 
Especially, trees on $ \ell $ vertices have the same chromatic polynomial $ t(t-1)^{\ell-1} $. 

Stanley \cite{stanley1995symmetric-aim} introduced a symmetric function generalization of the chromatic polynomial $ \chi_{\Gamma}(t) $ as follows. 
\begin{align*}
X_{\Gamma} &\coloneqq \sum_{\kappa} \prod_{v \in V_{\Gamma}} x_{\kappa(v)}, 
\end{align*}
where $ x_{1}, x_{2}, \dots  $ are infinitely many indeterminates and the sum runs over all proper colorings $ \kappa \colon V_{\Gamma} \rightarrow \mathbb{Z}_{>0} $. 
Note that if we evaluate $ X_{\Gamma} $ at $ x_{1}=x_{2} = \dots = x_{n} = 1 $ and $ x_{n+1} = x_{n+2} = \dots = 0 $, then the resulting value coincides with $ \chi_{\Gamma}(n) $. 
Therefore $ X_{\Gamma} $ is an invariant of $ \Gamma $ stronger than $ \chi_{\Gamma}(t) $. 

\begin{conjecture}[Stanley {\cite[p.170]{stanley1995symmetric-aim}}]\label{Stanley tree conjecture}
The chromatic symmetric function distinguishes  trees. 
Namely, when $ T_{1} $ and $ T_{2} $ are trees satisfying $ X_{T_{1}} = X_{T_{2}} $, they are isomorphic. 
\end{conjecture}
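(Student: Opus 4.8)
\subsection*{Proof proposal}

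The plan is to attack the conjecture through the power-sum expansion of $ X_{\Gamma} $, which for trees converts it into a purely combinatorial reconstruction question. By Stanley's expansion, for any graph one has
\begin{align*}
X_{\Gamma} = \sum_{S \subseteq E_{\Gamma}} (-1)^{|S|}\, p_{\lambda(S)},
\end{align*}
where $ \lambda(S) $ records the sizes of the connected components of the spanning subgraph $ (V_{\Gamma}, S) $ and $ p_{\lambda} $ is the power-sum symmetric function. When $ \Gamma = T $ is a tree on $ \ell $ vertices, every edge subset $ S $ is a forest, so deleting the complement $ D = E_{T} \setminus S $ yields $ |D|+1 $ components and the sign $ (-1)^{|S|} $ depends only on the number of parts of $ \lambda(S) $. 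Hence the coefficients of $ X_{T} $ in the power-sum basis are equivalent to the integers
\begin{align*}
a_{\lambda}(T) \coloneqq \#\Set{D \subseteq E_{T} | \text{the component sizes of } T \setminus D \text{ form } \lambda},
\qquad \lambda \vdash \ell.
\end{align*}
Thus the conjecture is equivalent to showing that the family $ \{a_{\lambda}(T)\}_{\lambda \vdash \ell} $ of edge-cut statistics determines $ T $ up to isomorphism, and the first step is to establish this reformulation cleanly.

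Second, I would extract a hierarchy of increasingly fine invariants from this family. The vertex count is $ \ell = |\lambda| $, and $ a_{(2,1^{\ell-2})}(T) = \ell - 1 $ certifies that the object is a tree. Partitions with a single nontrivial part, $ \lambda = (k{+}1, 1^{\ell-k-1}) $, count the subtrees of $ T $ with exactly $ k $ edges, so $ X_{T} $ recovers the full subtree-size distribution; partitions with two nontrivial parts of sizes $ a,b $ (with $ a+b=\ell $) record, for each edge, the sizes of the two sides of the induced cut, i.e. the edge-split distribution, from which one reads off further invariants such as the degree sequence. More generally, for each $ j $ the $ j $-part statistics give the distribution of piece-sizes obtained by deleting $ j-1 $ edges. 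The strategy is then to organize this data around a canonically chosen centroid, root $ T $ there, and reconstruct the multiset of rooted branches by induction on $ \ell $, using the multi-part statistics to pin down how the branches attach.

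The reconstruction step is where the genuine difficulty lies, and it is precisely the point at which the known partial results halt. The fundamental obstruction is that $ a_{\lambda}(T) $ records only the \emph{sizes} of the pieces produced by cutting edges, never their isomorphism \emph{types}; the adjacency structure of $ T $ is thereby smeared out by the symmetry of $ X_{T} $, and even identifying a single vertex canonically is nontrivial since edge-splits are known only as an unordered multiset. The decisive task is to promote size statistics to shape statistics --- equivalently, to recover the generalized degree sequence (the number of subtrees of each fixed isomorphism type) from the $ a_{\lambda}(T) $. The two lines of attack I would pursue in parallel are, first, to exploit the \emph{correlations} encoded by partitions with several large parts, which measure how families of vertex-disjoint subtrees can be packed and so carry distance and branching information beyond the raw size counts; and second, to classify all local modifications of a tree that preserve $ X_{T} $ and show that none connects non-isomorphic trees. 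I expect this promotion from sizes to shapes to be the main obstacle, and the principal reason the conjecture has resisted proof.
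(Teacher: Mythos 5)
Your proposal does not prove the statement, and there is no proof in the paper to compare it against: Conjecture \ref{Stanley tree conjecture} is Stanley's conjecture, stated in the paper only as motivation, and it is open --- the paper itself records merely a computational verification for trees with up to $29$ vertices \cite{heil2019algorithm-ajc}. What you establish in your first two steps is correct and standard. For a tree $T$ on $\ell$ vertices every $S \subseteq E_{T}$ is a forest, so $(-1)^{|S|} = (-1)^{\ell - c(S)}$ where $c(S)$ is the number of parts of $\lambda(S)$, and Stanley's expansion $X_{T} = \sum_{S \subseteq E_{T}} (-1)^{|S|} p_{\lambda(S)}$ makes $X_{T}$ equivalent to the family of statistics $a_{\lambda}(T)$; from these one indeed reads off the subtree-size distribution, the multiset of edge-cut splits, and the degree sequence. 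It is worth noting that this expansion, in its signed form (Theorem \ref{power sum expansion subset}), is exactly the engine behind the paper's own \emph{partial} distinguishing results for signed paths (Theorem \ref{main signed paths}), where the analogous component-type counts (Proposition \ref{signed tree components}) suffice because the paths considered are so constrained; so your starting point matches the toolkit the paper actually deploys.

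The genuine gap is your third step. ``Promote size statistics to shape statistics'' is not an argument but a renaming of the conjecture: the entire difficulty is precisely that $a_{\lambda}(T)$ records component sizes and never isomorphism types, and you supply no mechanism for recovering the types of branches, for canonically identifying a root (the centroid is canonical as a vertex of $T$, but nothing in the $a_{\lambda}(T)$ data lets you locate it or attach data to it), or for carrying out the proposed induction on rooted branches --- the induction has no verified base beyond small cases and no inductive step. Your two suggested lines of attack, exploiting correlations among partitions with several large parts and classifying $X$-preserving local modifications, are reasonable research directions, and you candidly flag that the decisive step is missing; but a proposal that terminates in ``I expect this to be the main obstacle'' proves nothing. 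As mathematics, the submission is a correct reformulation together with a survey of known derived invariants; as a proof it has a hole at exactly the point where the conjecture is hard, and that hole is not closable by the techniques you list --- which is why the statement remains a conjecture both in this paper and in the literature.
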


This conjecture is verified for trees with up to $ 29 $ vertices by Heil and Ji \cite{heil2019algorithm-ajc}. 
For recent studies, see \cite{aliste-prieto2017trees-dm, aliste-prieto2014proper-dm, huryn2020few-iajom, loebl2019isomorphism-adlhpd, martin2008distinguishing-joctsa, orellana2014graphs-dm, smith2015symmetric-a}

There is another conjecture concerning the $ e $-positivity of chromatic symmetric functions. 
See \cite{brosnan2018unit-aim, cho2019positivity-a, cho2019$e$-positivity-sjodm,  cho2020$e$-positivity-sldc, dahlberg2018triangular-a, dahlberg2019new-sldc, dahlberg2020resolving-jotems, dahlberg2018lollipop-sjodm, foley2019classes-tejoc, guay-paquet2013modular-a, guay-paquet2016second-a, harada2018cohomology-sldc, harada2019cohomology-ac, shareshian2016chromatic-aim, tsujie2018chromatic-gac} for recent studies. 
\begin{conjecture}[Stanley and Stembridge {\cite[Conjecture 5.5]{stanley1993immanants-joctsa}}, see also Stanley {\cite[Conjecture 5.1]{stanley1995symmetric-aim}}]
The chromatic symmetric function of the incomparability graph of a $ (\boldsymbol{3}+\boldsymbol{1}) $-free poset is $ e $-positive, that is, the coefficients of the expansion of the chromatic symmetric function with respect to the elementary symmetric functions are nonnegative. 
\end{conjecture}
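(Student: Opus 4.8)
The statement is the \textbf{Stanley–Stembridge conjecture}, which remains open in full generality; so rather than a proof I can only lay out the strategy that has brought the problem closest to resolution, and indicate exactly where it stalls. The first move is to reduce the class of posets. By Guay-Paquet's modular law \cite{guay-paquet2013modular-a}, the chromatic symmetric function of any $(\boldsymbol{3}+\boldsymbol{1})$-free poset is a convex combination of those of \emph{unit interval orders} (equivalently, natural unit interval orders), so $e$-positivity for all $(\boldsymbol{3}+\boldsymbol{1})$-free posets follows from $e$-positivity for this much more rigid subclass. I would therefore fix attention on the incomparability graphs of unit interval orders, whose edge sets are exactly the indifference graphs cut out by a Dyck-path/Hessenberg datum.

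Next I would pass to the refinement introduced by Shareshian and Wachs \cite{shareshian2016chromatic-aim}: the chromatic quasisymmetric function $X_{\Gamma}(\mathbf{x},q)$, which specializes to $X_{\Gamma}$ at $q=1$ and is in fact symmetric for these graphs. The sharpened target is that $X_{\Gamma}(\mathbf{x},q)$ is $e$-positive (indeed palindromic and $e$-unimodal) with coefficients in $\mathbb{Z}_{\ge 0}[q]$; proving this and setting $q=1$ would settle the conjecture. The advantage of the refinement is its geometry: by the theorem of Brosnan–Chow \cite{brosnan2018unit-aim} and Guay-Paquet \cite{guay-paquet2016second-a}, the coefficients of $X_{\Gamma}(\mathbf{x},q)$ record the graded character of the symmetric-group dot action on the cohomology of the regular semisimple Hessenberg variety attached to the same Hessenberg datum. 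This turns the desired combinatorial positivity into a question about how that graded $S_{n}$-representation decomposes.

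The plan from here forks into a combinatorial route and a geometric route. On the combinatorial side I would try to build a \emph{sign-reversing involution}: expand $X_{\Gamma}(\mathbf{x},q)$ over acyclic orientations or over the relevant fillings, group terms by an $e_{\lambda}$-template, and cancel everything of the wrong sign, leaving a manifestly nonnegative count — the style of argument that already resolves paths, cycles, and several structured families \cite{dahlberg2020resolving-jotems}. On the geometric side I would instead seek an $S_{n}$-equivariant filtration or explicit basis of the Hessenberg cohomology that cuts each graded piece into permutation modules $\mathrm{Ind}_{S_{\lambda}}^{S_{n}}\mathbf{1}$, since those are precisely what assemble into the $e_{\lambda}$, following the cohomological programs of Harada–Precup and others \cite{harada2019cohomology-ac}.

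The hard part — and the reason the conjecture is still open — is the final conversion from \emph{Schur-} (or geometric-) positivity to \emph{elementary-}positivity. Both routes deliver a clean $S_{n}$-module, and from it Schur-positivity of $X_{\Gamma}$ essentially for free; but $e$-positivity is strictly stronger, and neither a compatible sign-reversing involution nor an equivariant basis respecting the $e_{\lambda}$-decomposition is known in general. Every serious attempt, including the ones above, founders at precisely this gap, so I would expect the genuine obstacle of any honest proof plan to be producing the missing positive model rather than any of the preparatory reductions.
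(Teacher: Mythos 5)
The paper offers no proof of this statement at all---it is quoted as the still-open Stanley--Stembridge conjecture, included only as background motivation---so there is no argument of the paper's for your attempt to be measured against, and you were right to decline to manufacture one. Your survey is accurate on every point: Guay-Paquet's modular law does reduce the conjecture to unit interval orders, the Shareshian--Wachs chromatic quasisymmetric refinement and its Brosnan--Chow/Guay-Paquet identification with the dot action on regular semisimple Hessenberg cohomology are the main structural handles, and the genuine obstruction is exactly where you place it: the geometric picture yields Schur-positivity (already known for this class by Gasharov), but $e$-positivity demands a decomposition into permutation modules $\mathrm{Ind}_{S_{\lambda}}^{S_{n}}\mathbf{1}$ that no known involution or equivariant basis provides.
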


An \textbf{orientation} of $ \Gamma $ is an assignment of a direction to each edge $ \{u,v\} $, denoted by $ (u,v) $ or $ (v,u) $. 
An orientation is called \textbf{acyclic} if it has no directed cycles. 
A coloring $ \kappa $ is said to be \textbf{compatible} with an orientation $ \mathfrak{o} $ if $ (u,v) \in \mathfrak{o} $, then $ \kappa(u) \geq \kappa
(v) $. 
We define a function $ \overline{\chi}_{\Gamma} $ by 
\begin{align*}
\overline{\chi}_{\Gamma}(n) \coloneqq \#\Set{(\mathfrak{o},\kappa) | \mathfrak{o} \text{ is acyclic and } \kappa \colon V_{\Gamma} \rightarrow [n] \text{ is compatible with } \mathfrak{o}. }. 
\end{align*}

Since $ \chi_{\Gamma} $ is a polynomial, we can evaluate it at negative integers although it is nonsense by definition. 
Stanley proved the following \emph{combinatorial reciprocity theorem}. 
\begin{theorem}[{Stanley \cite[Theorem 1.2]{stanley1973acyclic-dm}}]\label{Stanley CR chromatic polynomial}
Let $ \Gamma $ be a simple graph. 
Then 
$ \chi_{\Gamma}(-n) = (-1)^{|V_{\Gamma}|}\overline{\chi}_{\Gamma}(n)
 $. 
\end{theorem}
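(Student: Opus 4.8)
The plan is to prove Stanley's reciprocity theorem $\chi_{\Gamma}(-n) = (-1)^{|V_{\Gamma}|}\overline{\chi}_{\Gamma}(n)$ by a direct bijective argument that identifies the quantity counted by $\overline{\chi}_{\Gamma}(n)$ with a signed count naturally expressible as a polynomial. First I would fix $n \geq 1$ and consider the set of pairs $(\mathfrak{o}, \kappa)$ where $\mathfrak{o}$ is an acyclic orientation and $\kappa \colon V_{\Gamma} \to [n]$ is compatible with $\mathfrak{o}$, meaning $(u,v) \in \mathfrak{o}$ implies $\kappa(u) \geq \kappa(v)$. The key observation to establish is a uniqueness/compatibility lemma: given any coloring $\kappa \colon V_\Gamma \to [n]$, the acyclic orientations compatible with it are exactly those obtained by orienting each edge $\{u,v\}$ from the larger color toward the smaller, while edges joining equal-color endpoints may be oriented either way subject to acyclicity. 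Since equal-color endpoints never create a constraint forcing a cycle among themselves in a way that cannot be broken, I would count, for each $\kappa$, the number of compatible acyclic orientations and show it equals $2^{m(\kappa)}$-type data only after incorporating the alternating sign.

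The cleaner route, which I would carry out, is via the \emph{acyclic orientation counting polynomial} and an inclusion-exclusion over colorings. I would define, for each coloring $\kappa$ (not necessarily proper), a local factor and observe that summing over compatible acyclic orientations regroups the count so that $\overline{\chi}_{\Gamma}(n) = \sum_{\kappa} w(\kappa)$ where $w(\kappa)$ records the number of acyclic orientations for which $\kappa$ is compatible. The heart of the matter is then to recognize that evaluating the chromatic polynomial at $-n$ produces, via the broken-circuit or deletion-contraction expansion, exactly the same sum up to the global sign $(-1)^{|V_\Gamma|}$. Concretely, I would use the deletion-contraction recurrence $\chi_{\Gamma}(t) = \chi_{\Gamma \setminus e}(t) - \chi_{\Gamma / e}(t)$ for both sides: the chromatic polynomial satisfies it by the standard argument, and I would verify that $(-1)^{|V_\Gamma|}\overline{\chi}_{\Gamma}(n)$ satisfies the \emph{same} recurrence with the correct sign bookkeeping (noting $|V_{\Gamma/e}| = |V_\Gamma| - 1$).

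The inductive scheme is therefore the backbone: I would proceed by induction on the number of edges $|E_\Gamma|$. For the base case with no edges, both $\chi_\Gamma(t) = t^{|V_\Gamma|}$ and $\overline{\chi}_\Gamma(n) = n^{|V_\Gamma|}$ (the unique empty orientation is acyclic and every coloring is compatible), so $\chi_\Gamma(-n) = (-n)^{|V_\Gamma|} = (-1)^{|V_\Gamma|} n^{|V_\Gamma|}$ as required. For the inductive step, fixing an edge $e = \{u,v\}$, I would partition the compatible pairs $(\mathfrak{o}, \kappa)$ according to the behavior at $e$: those where $\kappa(u) \neq \kappa(v)$ (the orientation of $e$ is then forced, contributing to the deletion term) and those where $\kappa(u) = \kappa(v)$ (which correspond to colorings of the contraction $\Gamma / e$). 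Establishing that this partition yields precisely $\overline{\chi}_{\Gamma}(n) = \overline{\chi}_{\Gamma \setminus e}(n) - (\text{correction})$ matching the deletion-contraction identity is the crux.

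The main obstacle I anticipate is the bookkeeping around edges with equal-colored endpoints and the acyclicity constraint. When $\kappa(u) = \kappa(v)$, the edge $e$ may be oriented in two ways, and one must show that enforcing acyclicity interacts correctly with the contraction $\Gamma / e$ so that these pairs are counted with the right multiplicity and sign; carelessly double-counting acyclic orientations on the contracted graph is the easy error. I expect to handle this by a careful compatibility argument showing that acyclic orientations of $\Gamma$ compatible with a $\kappa$ having $\kappa(u)=\kappa(v)$ biject with acyclic orientations of $\Gamma/e$ compatible with the induced coloring, after removing the redundant orientation choice on $e$. Once this bijection and the forced-orientation count for $\kappa(u)\neq\kappa(v)$ are pinned down, the deletion-contraction recurrence for $(-1)^{|V_\Gamma|}\overline{\chi}_\Gamma(n)$ follows and the induction closes.
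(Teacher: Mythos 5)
Your plan is correct and would go through, but it is a genuinely different route from the one this paper takes. The paper does not prove Theorem \ref{Stanley CR chromatic polynomial} by induction at all: following Beck--Zaslavsky, it identifies acyclic orientations with chambers $ C $ of the graphic arrangement $ \mathcal{A}_{\Gamma} $, compatible pairs $ (\mathfrak{o},\kappa) $ with integer points of $ \overline{C} \cap [1,n]^{\ell} $, and proper colorings with integer points of the open chambers, and then applies Ehrhart reciprocity to each polytope $ \overline{C} \cap [1,n]^{\ell} $ and sums over chambers. Your deletion--contraction induction is in fact Stanley's original 1973 argument (the paper explicitly attributes that method to Stanley), and what it buys is elementarity: no polytopes, no Ehrhart machinery, just the recurrence $ \chi_{\Gamma}(t) = \chi_{\Gamma\setminus e}(t) - \chi_{\Gamma/e}(t) $ together with the key lemma $ \overline{\chi}_{\Gamma}(n) = \overline{\chi}_{\Gamma\setminus e}(n) + \overline{\chi}_{\Gamma/e}(n) $; note the correction term enters with a \emph{plus} (your ``$-(\text{correction})$'' is the one sign to fix), which combines with $ (-1)^{|V_{\Gamma/e}|} = -(-1)^{|V_{\Gamma}|} $ to match the minus in the chromatic recurrence. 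To nail the crux, sharpen your ``biject after removing the redundant orientation choice'' to the standard one-or-two-extensions lemma: every compatible pair on $ \Gamma\setminus e $ extends to at least one compatible pair on $ \Gamma $ (if both orientations of $ e $ closed a directed cycle, $ \Gamma\setminus e $ would already contain one), it extends uniquely when $ \kappa(u)\neq\kappa(v) $ or when a directed path joins $ u $ and $ v $, and the pairs admitting two extensions biject with compatible pairs of $ \Gamma/e $ (the no-directed-path condition is exactly what makes the induced orientation on the simple contraction well defined and acyclic, even when $ u,v $ have common neighbors). What the paper's geometric route buys, by contrast, is uniformity: the same chamber-by-chamber argument, recast via the intersection lattice in Theorems \ref{XA} and \ref{oXA}, is what generalizes to signed-graphic arrangements and yields the paper's main reciprocity (Theorem \ref{main theorem reciprocity}), where a deletion--contraction induction is much less directly available.
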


We also define a function $ \overline{X}_{\Gamma} $ by 
\begin{align*}
\overline{X}_{\Gamma} &\coloneqq \sum_{(\mathfrak{o}, \kappa)} \prod_{v \in V_{\Gamma}} x_{\kappa(v)}, 
\end{align*}
where the sum ranges over all pairs $ (\mathfrak{o}, \kappa) $ such that $ \mathfrak{o} $ is an acyclic orientation of $ \Gamma $ and $ \kappa \colon V_{\Gamma} \rightarrow \mathbb{Z}_{>0} $ is compatible with $ \mathfrak{o} $. 
By a reason similar to $ X_{\Gamma} $, the function $ \overline{X}_{\Gamma} $ is a generalization of $ \overline{\chi}_{\Gamma} $. 

Let $ \omega $ denote the standard involution of symmetric functions. 
It is characterized as an algebra homomorphism satisfying $ \omega p_{k} = (-1)^{k-1}p_{k} $, where $ p_{k}=\sum_{i=1}^{\infty}x_{i}^{k} $ denotes the power sum symmetric function of degree $ k $ for every positive integer $ k $ (See \cite[Proposition 7.7.5]{stanley1999enumerative}). 
Theorem \ref{Stanley CR chromatic polynomial} is generalized as follows. 
\begin{theorem}[{Stanley \cite[Theorem 4.2]{stanley1995symmetric-aim}}]\label{Stanley CR chromatic symmetric function}
Let $ \Gamma $ be a simple graph. 
Then $ \omega X_{\Gamma} = \overline{X}_{\Gamma} $. 
\end{theorem}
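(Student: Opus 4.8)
The plan is to prove the identity coefficient by coefficient in the monomial ``basis'' $\{x^{\kappa}\}_{\kappa}$, where $\kappa$ ranges over \emph{all} colorings $\kappa\colon V_{\Gamma}\to\mathbb{Z}_{>0}$ (not only proper ones) and $x^{\kappa}\coloneqq\prod_{v\in V_{\Gamma}}x_{\kappa(v)}$. The starting point is the Whitney-type power sum expansion of $X_{\Gamma}$, obtained by inclusion--exclusion over the set of edges that are forced to be monochromatic:
\[
X_{\Gamma}=\sum_{S\subseteq E_{\Gamma}}(-1)^{|S|}p_{\lambda(S)},
\]
where $\lambda(S)$ is the partition of $|V_{\Gamma}|$ whose parts are the sizes of the connected components of the spanning subgraph $(V_{\Gamma},S)$ and $p_{\lambda(S)}$ is the corresponding product of power sums. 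The point is that a coloring contributes to the $S$-term exactly when it is constant on each component of $(V_{\Gamma},S)$, and summing $x^{\kappa}$ over all such colorings yields $p_{\lambda(S)}$.

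Next I would apply $\omega$. Since $\omega$ is an algebra homomorphism with $\omega p_{k}=(-1)^{k-1}p_{k}$, we get $\omega p_{\lambda(S)}=(-1)^{|V_{\Gamma}|-c(S)}p_{\lambda(S)}$, where $c(S)$ denotes the number of components of $(V_{\Gamma},S)$, and hence
\[
\omega X_{\Gamma}=\sum_{S\subseteq E_{\Gamma}}(-1)^{|S|+|V_{\Gamma}|-c(S)}p_{\lambda(S)}.
\]
To read off the coefficient of $x^{\kappa}$, note that $x^{\kappa}$ occurs in $p_{\lambda(S)}$ (with coefficient $1$) precisely when $\kappa$ is constant on the components of $(V_{\Gamma},S)$, equivalently when $S$ is contained in the set $M(\kappa)$ of monochromatic edges of $\kappa$. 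Therefore the coefficient of $x^{\kappa}$ in $\omega X_{\Gamma}$ equals $\sum_{S\subseteq M(\kappa)}(-1)^{|S|+|V_{\Gamma}|-c(S)}$.

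On the other side, the coefficient of $x^{\kappa}$ in $\overline{X}_{\Gamma}$ is the number of acyclic orientations $\mathfrak{o}$ compatible with the fixed coloring $\kappa$. The key observation is that any directed cycle of a compatible orientation forces its vertices to share a common color, so every directed cycle lies inside the monochromatic subgraph $(V_{\Gamma},M(\kappa))$; conversely, each non-monochromatic edge has its orientation forced (from the larger color to the smaller) and can never lie on a directed cycle. Thus a compatible orientation is acyclic if and only if its restriction to $M(\kappa)$ is acyclic, so this coefficient equals the number $a(M(\kappa))$ of acyclic orientations of $(V_{\Gamma},M(\kappa))$.

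It remains to identify the two coefficients, that is, to show $a(M(\kappa))=\sum_{S\subseteq M(\kappa)}(-1)^{|S|+|V_{\Gamma}|-c(S)}$. This is exactly the polynomial reciprocity of Theorem~\ref{Stanley CR chromatic polynomial} applied to the graph $(V_{\Gamma},M(\kappa))$: evaluating the subgraph expansion $\chi_{(V_{\Gamma},M(\kappa))}(t)=\sum_{S\subseteq M(\kappa)}(-1)^{|S|}t^{c(S)}$ at $t=-1$, combined with $\chi(-1)=(-1)^{|V_{\Gamma}|}\overline{\chi}(1)$ and the fact that $\overline{\chi}(1)=a(M(\kappa))$ (the constant coloring is compatible with every orientation), yields the desired equality. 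Matching coefficients of every $x^{\kappa}$ then gives $\omega X_{\Gamma}=\overline{X}_{\Gamma}$. I expect the main obstacle to be the orientation argument of the third step, namely verifying cleanly that acyclicity of a compatible orientation is detected entirely by the monochromatic subgraph; once this is in place, the reduction to Theorem~\ref{Stanley CR chromatic polynomial} is essentially bookkeeping.
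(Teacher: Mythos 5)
Your proof is correct, but it takes a genuinely different route from the paper. The paper never proves this theorem directly: it quotes it from Stanley and recovers it (see the remark after Theorem \ref{main theorem reciprocity}) as the image under the projection $\pi$ of the signed generalization, whose proof runs through the intersection lattice of the arrangement --- the power sum expansion indexed by flats with M\"{o}bius coefficients (Theorems \ref{XA}, \ref{oXA}, \ref{power sum expansion flat}), the Beck--Zaslavsky chamber-multiplicity lemma (Lemma \ref{BZ multiplicity}) to express $\overline{X}$, Rota's sign theorem (Lemma \ref{Rota sign theorem}) to convert $\mu(\hat{0},Y)(-1)^{\codim Y}$ into $\left|\mu(\hat{0},Y)\right|$, and Zaslavsky's rank formula. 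You instead use only the edge-subset (Whitney) expansion --- the simple-graph analogue of Theorem \ref{power sum expansion subset}, with no M\"{o}bius inversion over flats --- and handle $\overline{X}_{\Gamma}$ by a direct orientation argument: your key lemma, that a $\kappa$-compatible orientation is acyclic iff its restriction to the monochromatic subgraph $(V_{\Gamma},M(\kappa))$ is acyclic (forced orientations on bichromatic edges, free acyclic choices on $M(\kappa)$), is correct, and the remaining identity $a(M(\kappa))=\sum_{S\subseteq M(\kappa)}(-1)^{|S|+|V_{\Gamma}|-c(S)}$ is exactly Theorem \ref{Stanley CR chromatic polynomial} at $n=1$ combined with Whitney's expansion of $\chi$ at $t=-1$. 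One presentational caveat: distinct colorings can produce the same monomial, so $\{x^{\kappa}\}$ is not literally a basis; your argument is nevertheless valid because both sides are naturally written as refined sums $\sum_{\kappa}c_{\kappa}x^{\kappa}$ over colorings, and you match the refined coefficients $c_{\kappa}$, which is stronger than matching monomial coefficients. As for trade-offs: your argument is more elementary and self-contained modulo the polynomial reciprocity, which you consume as a black box (it is stated as known in the paper, so this is legitimate); the paper's lattice-theoretic route costs more machinery but needs no acyclic-orientation input --- it effectively reproves that reciprocity --- and, crucially for the paper's purposes, it generalizes verbatim to signed graphs, where your orientation-based reduction would require reworking the notions of orientation and compatibility.
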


Stanley proved Theorem \ref{Stanley CR chromatic polynomial} and Theorem \ref{Stanley CR chromatic symmetric function} by using the deletion-contraction formula and the reciprocity theorem for $ P $-partitions. 
Beck and Zaslavsky \cite[Corollary 5.5]{beck2006inside-out-aim} showed Theorem \ref{Stanley CR chromatic polynomial} as a corollary of the Ehrhart reciprocity \cite{macdonald1971polynomials-jotlms, mcmullen1978lattice-adm}, which states a relation between the number of lattice points in a lattice polytope and the number of lattice points in the interior of the polytope. 

To see the reason why the Ehrhart reciprocity is effective, we introduce graphic arrangements. 
Let $ \Gamma $ be a simple graph on vertex set $ [\ell] = \{1, \dots, \ell \} $. 
The \textbf{graphic arrangement} $ \mathcal{A}_{\Gamma} $ is a hyperplane arrangement defined by
\begin{align*}
\mathcal{A}_{\Gamma} \coloneqq \Set{ \{z_{i}=z_{j}\} | \{i,j \} \in E_{\Gamma}}, 
\end{align*}
where $ (z_{1}, \dots, z_{\ell}) $ denotes a system of coordinates of the Euclidean space $ \mathbb{R}^{\ell} $. 

Many notions of graphs can be interpreted as notions of graphic arrangements. 
A vertex $ i $ corresponds to the coordinate $ z_{i} $. 
An edge $ \{i,j\} $ corresponds to the hyperplane $ \{z_{i}=z_{j}\} $. 
A direction $ (i,j) $ is considered as the half space $ \{z_{i} > z_{j}\} $. 
Given an acyclic orientation $ \mathfrak{o} $, we consider the intersection $ C \coloneqq \bigcap_{(i,j) \in \mathfrak{o}}\{z_{i} > z_{j}\} $. 
Then $ C $ is a \textbf{chamber} of $ \mathcal{A}_{\Gamma} $, a connected component of the complement $ M(\mathcal{A}_{\Gamma}) \coloneqq \mathbb{R}^{\ell} \setminus \bigcup_{H \in \mathcal{A}_{\Gamma}}H $. 
Furthermore, the construction is a bijection from the acyclic orientations of $ \Gamma $ to the chambers of $ \mathcal{A}_{\Gamma} $
(See \cite[Lemma 7.1]{greene1983interpretation-totams} for details). 
A coloring is considered to be an integer point in $ \mathbb{Z}_{>0}^{\ell} $ and hence a proper coloring corresponds to an integer point in $ M(\mathcal{A}_{\Gamma}) \cap \mathbb{Z}_{>0}^{\ell} $. 
Table \ref{Tab graph vs graphic arrangement} shows a summary. 

\begin{table}[t]
\centering
\begin{tabular}{c|c}
graph & graphic arrangement \\
\hline
vertex $ i $ & coordinate $ z_{i} $ \\
edge $ \{i,j\} $ & hyperplane $ \{z_{i}=z_{j}\} $ \\
direction $ (i,j) $ & half space $ \{z_{i} > z_{j}\} $ \\
acyclic orientation & chamber \\
coloring & integer point \\
proper coloring & integer point in the complement
\end{tabular}
\caption{Graphs versus graphic arrangements}\label{Tab graph vs graphic arrangement}
\end{table}

Let $ \mathfrak{o} $ be an acyclic orientation and $ C $ the corresponding chamber. 
Then a pair $ (\mathfrak{o},\kappa) $ such that $ \kappa \colon [\ell] \rightarrow \mathbb{Z}_{>0} $ is compatible with $ \mathfrak{o} $ corresponds to an integer point in $ \overline{C} \cap \mathbb{Z}_{>0}^{\ell} $, where $ \overline{C} $ denotes the closure of $ C $. 
Since integer points in $ C \cap \mathbb{Z}_{>0}^{\ell} $ correspond to proper colorings, if we apply the Ehrhart reciprocity to polytopes $ \overline{C} \cap [1,n]^{\ell} $ for each chamber $ C $, then we obtain Theorem \ref{Stanley CR chromatic polynomial}, where $ [1,n] $ denotes the closed interval $ \Set{a \in \mathbb{R} | 1 \leq a \leq n} $. 
For details and more general results, see \cite{beck2006inside-out-aim}. 

\begin{remark}\label{csf chamber}
The functions $ X_{\Gamma} $ and $ \overline{X}_{\Gamma} $ for a simple graph $ \Gamma $ on $ [\ell] $ can be expressed in terms of the graphic arrangement $ \mathcal{A}_{\Gamma} $ as follows. 
\begin{align*}
X_{\Gamma} &= \sum_{C}\sum_{\boldsymbol{\alpha} \in C \cap \mathbb{Z}_{>0}^{\ell}} x_{\alpha_{1}} \cdots x_{\alpha_{\ell}}, \\
\overline{X}_{\Gamma} &= \sum_{C}\sum_{\boldsymbol{\alpha} \in \overline{C} \cap \mathbb{Z}_{>0}^{\ell}} x_{\alpha_{1}} \cdots x_{\alpha_{\ell}}, 
\end{align*}
where $ C $ ranges over all chambers of $ \mathcal{A}_{\Gamma} $ and $ \boldsymbol{\alpha} = (\alpha_{1}, \dots, \alpha_{\ell}) $. 
\end{remark}

\subsection{Main results}
In this paper, we will present an analogue of Theorem \ref{Stanley CR chromatic symmetric function} for signed graphs (Theorem \ref{main theorem reciprocity}) and give a result concerning a generalization of Conjecture \ref{Stanley tree conjecture} for some signed paths (Theorem \ref{main signed paths}). 

A \textbf{signed graph} is a quadruple $ \Gamma = (V_{\Gamma}, E_{\Gamma}^{+}, E_{\Gamma}^{-}, L_{\Gamma}) $, where 
\begin{itemize}
\item $ V_{\Gamma} $ is a finite set, whose element is called a \textbf{vertex},  
\item $ E_{\Gamma}^{+} $ consists of 2-element sets in $ V_{\Gamma} $, whose element is called a \textbf{positive edge}, 
\item $ E_{\Gamma}^{-} $ consists of 2-element sets in $ V_{\Gamma} $, whose element is called a \textbf{negative edge}, 
\item $ L_{\Gamma} $ is a subset of $ V_{\Gamma} $, whose element is called a \textbf{loop}. 
\end{itemize}
Let $ E_{\Gamma} $ denote the \textbf{edge set} $ E_{\Gamma} \coloneqq E^{+}_{\Gamma} \sqcup E^{-}_{\Gamma} \sqcup L_{\Gamma} $, where $ \sqcup $ means the disjoint union. 

When $ V_{\Gamma} = [\ell] $, we define the \textbf{signed-graphic arrangement} $ \mathcal{A}_{\Gamma} $ in $ \mathbb{R}^{\ell} $ by
\begin{align*}
\mathcal{A}_{\Gamma} 
\coloneqq \Set{\{z_{i}=z_{j}\} | \{i,j\} \in E_{\Gamma}^{+}} 
\cup \Set{\{z_{i}=-z_{j}\} | \{i,j\} \in E_{\Gamma}^{-}} 
\cup \Set{\{z_{i}=0\} | i \in L_{\Gamma}}. 
\end{align*}

Note that every simple graph $ (V,E) $ can be regarded as a signed graph $ (V,E,\varnothing,\varnothing) $. 
The graphic arrangement $ \mathcal{A}_{(V,E)} $ and the signed-graphic arrangement $ \mathcal{A}_{(V,E,\varnothing,\varnothing)} $ coincide. 

As with simple graphs, it is natural to consider that a coloring of a signed graph is an integer point in the complement $ M(\mathcal{A}_{\Gamma}) = \mathbb{R}^{\ell}\setminus \bigcup_{H \in \mathcal{A}_{\Gamma}}H $ of the signed-graphic arrangement $ \mathcal{A}_{\Gamma} $ and also the edge set $ E_{\Gamma} $ has the matroid structure which stems from the linear dependence matroid on the signed-graphic arrangement. 
Zaslavsky \cite{zaslavsky1982signed-dm, zaslavsky1989biased-joctsb, zaslavsky1991biased-joctsb, zaslavsky1995biased-joctsb, zaslavsky2001supersolvable-ejoc, zaslavsky2003biased-joctsb} studied the matroids and the chromatic polynomials of signed graphs and gain graphs (further generalizations of graphs whose edges are labeled by group elements). 

We introduce analogues for a signed graph $ \Gamma $ following Remark \ref{csf chamber}. 
\begin{definition}
Let $ \Gamma $ be a signed graph with vertex set $ [\ell] $. 
Define $ X_{\Gamma} $ and $ \overline{X}_{\Gamma} $ by 
\begin{align*}
X_{\Gamma} &\coloneqq \sum_{C}\sum_{\boldsymbol{\alpha} \in C \cap \mathbb{Z}^{\ell}} x_{\alpha_{1}} \cdots x_{\alpha_{\ell}}, \\
\overline{X}_{\Gamma} &\coloneqq \sum_{C}\sum_{\boldsymbol{\alpha} \in \overline{C} \cap \mathbb{Z}^{\ell}} x_{\alpha_{1}} \cdots x_{\alpha_{\ell}}, 
\end{align*}
where $ C $ ranges over all chambers of $ \mathcal{A}_{\Gamma} $, $ \boldsymbol{\alpha} = (\alpha_{1}, \dots, \alpha_{\ell}) $, and $ (x_{i})_{i \in \mathbb{Z}} $ denote countably infinite indeterminates indexed by integers. 
We call $ X_{\Gamma} $ the \textbf{chromatic signed-symmetric function} of $ \Gamma $. 
\end{definition}

Every permutation $ \sigma $ of $ \mathbb{Z} $ acts the indeterminates $ (x_{i})_{i \in \mathbb{Z}} $ by $ \sigma(x_{i}) \coloneqq x_{\sigma(i)} $. 
A permutation $ \sigma $ of $ \mathbb{Z} $ is called a \textbf{signed permutation} if $ \sigma(-i) = -\sigma(i) $ for all $ i \in \mathbb{Z} $. 
A degree-bounded function which is invariant under the action of signed permutations is called a \textbf{signed-symmetric} function (See Definition \ref{definition of signed-symmetric functions}). 
It is easy to see that $ X_{\Gamma} $ is a signed-symmetric function (Proposition \ref{X is signed-symmetric}). 

Note that Egge \cite{egge2016chromatic} introduced another analogous invariant under the action of signed permutations for a loopless signed graph. 
Chmutov et al. \cite{chmutov2020b-symmetric} also investigate the invariant.

For nonnegative integers $ a $ and $ b $ with $ (a,b) \neq (0,0) $, we define
\begin{align*}
p_{\begin{psmallmatrix} a \\ b \end{psmallmatrix}}
\coloneqq \sum_{i \in \mathbb{Z}}x_{i}^{a}x_{-i}^{b}, 
\end{align*}
which is an analogue of the power sum symmetric function and is invariant under the action of signed permutations.

Signed-symmetric functions form a ring, which is similar to symmetric functions. 
Furthermore, the ring is actually a free commutative algebra (Theorem \ref{power sum basis}) and we may define an involution $ \omega $ on the ring by
\begin{align*}
\omega x_{0} \coloneqq -x_{0} \quad \text{ and } \quad
\omega p_{\begin{psmallmatrix} a \\ b \end{psmallmatrix}} \coloneqq (-1)^{a+b-1} p_{\begin{psmallmatrix} a \\ b \end{psmallmatrix}}. 
\end{align*}

One of the main results of this article is the following combinatorial reciprocity theorem generalizing Theorem \ref{Stanley CR chromatic symmetric function}. 
\begin{theorem}[\textcolor{red}{This was proved by Wolfgang \cite{wolfgang1997two}}]\label{main theorem reciprocity}
Let $ \Gamma $ be a signed graph. 
Then $ \omega X_{\Gamma} = \overline{X}_{\Gamma} $. 
\end{theorem}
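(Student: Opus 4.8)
The plan is to imitate the proof of Theorem~\ref{Stanley CR chromatic symmetric function}: expand $X_{\Gamma}$ in the power-sum generators $p_{\begin{psmallmatrix} a \\ b \end{psmallmatrix}}$ supplied by Theorem~\ref{power sum basis}, apply $\omega$ diagonally, and recognize the outcome as $\overline{X}_{\Gamma}$. The engine will be a signed-graph analogue of Whitney's theorem. Starting from the coloring description, in which a monomial $\prod_{v} x_{\kappa(v)}$ survives precisely when $\kappa$ violates no edge (a positive edge $\{i,j\}$ being violated when $\kappa(i)=\kappa(j)$, a negative edge when $\kappa(i)=-\kappa(j)$, and a loop at $i$ when $\kappa(i)=0$), inclusion--exclusion over the set $S$ of simultaneously violated edges should give
\begin{align*}
X_{\Gamma} = \sum_{S \subseteq E_{\Gamma}} (-1)^{|S|} \prod_{K} q_{K},
\end{align*}
where $K$ runs over the connected components of the spanning subgraph $(V_{\Gamma},S)$.

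The first real step is to identify the factor $q_{K}$. A coloring that violates every edge of $S$ is locked up to sign on each component $K$, and here Zaslavsky's balanced/unbalanced dichotomy enters: if $K$ is balanced it admits a Harary bipartition into classes of sizes $(a_{K},b_{K})$, the locked colorings assign some value $i$ to one class and $-i$ to the other, and summing over $i$ gives $q_{K}=p_{\begin{psmallmatrix} a_{K} \\ b_{K} \end{psmallmatrix}}$; if $K$ is unbalanced the only consistent assignment is identically $0$, so $q_{K}=x_{0}^{|K|}$. Applying $\omega$ to this expansion is then purely formal: by $\omega p_{\begin{psmallmatrix} a \\ b \end{psmallmatrix}}=(-1)^{a+b-1}p_{\begin{psmallmatrix} a \\ b \end{psmallmatrix}}$ and $\omega x_{0}=-x_{0}$, a balanced component of size $|K|$ contributes the sign $(-1)^{|K|-1}$ while an unbalanced one contributes $(-1)^{|K|}$, i.e. an extra factor $-1$.

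To finish I would run the same inclusion--exclusion for $\overline{X}_{\Gamma}$, where each edge now imposes a weak rather than a strict condition; equivalently, I would invoke a chamber-by-chamber open-to-closed reciprocity (a signed $\mathbb{Z}$-version of Stanley's reciprocity for $P$-partitions, organizing the closed-chamber lattice points of the cone $\overline{C}$ against the open ones of $C$) and sum over chambers. Either route should produce an expansion of $\overline{X}_{\Gamma}$ that agrees term by term with the $\omega$-image computed above, which is the assertion $\omega X_{\Gamma}=\overline{X}_{\Gamma}$; one can already confirm the mechanism on the single looped vertex, where $X_{\Gamma}=p_{\begin{psmallmatrix} 1 \\ 0 \end{psmallmatrix}}-x_{0}$ and $\overline{X}_{\Gamma}=p_{\begin{psmallmatrix} 1 \\ 0 \end{psmallmatrix}}+x_{0}=\omega X_{\Gamma}$.

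The main obstacle I anticipate is precisely the bookkeeping around the value $0$. Since $0=-0$ is the unique fixed point of the sign involution, the unbalanced components carry a sign $(-1)^{|K|}$ that differs from the balanced sign $(-1)^{|K|-1}$, and this discrepancy must be matched on the geometric side by the lattice points lying on the hyperplanes $\{z_{i}=0\}$ that are gained when passing from $C$ to $\overline{C}$. Getting this extra sign to come out correctly is exactly what forces the rule $\omega x_{0}=-x_{0}$, and verifying the signed Whitney expansion together with its closed counterpart---with the balanced/unbalanced classification tracked consistently through every component---is where the substance of the argument lies.
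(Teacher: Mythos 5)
Your open-side computation is sound---the signed Whitney expansion you describe, with $q_{K}=p_{\begin{psmallmatrix} a_{K} \\ b_{K} \end{psmallmatrix}}$ for balanced components and $q_{K}=x_{0}^{|K|}$ for unbalanced ones, is exactly the paper's Theorem \ref{power sum expansion subset}, the termwise application of $\omega$ is formal, and your looped-vertex check is correct. The genuine gap is the closed side, which you leave at ``either route should produce an expansion that agrees term by term.'' Your first route---rerun the same inclusion--exclusion with each edge imposing a weak condition---fails outright, because $\overline{X}_{\Gamma}$ is not a sum over colorings subject to weak edge constraints: it counts each lattice point $\boldsymbol{\alpha}$ with multiplicity $m_{\mathcal{A}}(\boldsymbol{\alpha})$, the number of chambers whose closure contains $\boldsymbol{\alpha}$. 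Already for a single positive edge on two vertices one has
\begin{align*}
X_{\Gamma} = p_{\begin{psmallmatrix} 1 \\ 0 \end{psmallmatrix}}^{2} - p_{\begin{psmallmatrix} 2 \\ 0 \end{psmallmatrix}}, \qquad
\overline{X}_{\Gamma} = p_{\begin{psmallmatrix} 1 \\ 0 \end{psmallmatrix}}^{2} + p_{\begin{psmallmatrix} 2 \\ 0 \end{psmallmatrix}},
\end{align*}
because the diagonal points lie in the closures of \emph{both} chambers and are counted twice, whereas a naive weak-constraint count would give only $p_{\begin{psmallmatrix} 1 \\ 0 \end{psmallmatrix}}^{2}$. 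Handling this multiplicity is precisely the content the paper imports from Beck--Zaslavsky: $m_{\mathcal{A}}(\boldsymbol{\alpha})$ equals the number of chambers of the localization $\mathcal{A}_{Z_{\boldsymbol{\alpha}}}$, which is $|\chi_{\mathcal{A}_{Z_{\boldsymbol{\alpha}}}}(-1)|$ (Lemma \ref{BZ multiplicity}), and converting that count into $\sum_{Y \leq Z_{\boldsymbol{\alpha}}}|\mu(\hat{0},Y)|$ requires Rota's sign theorem (Lemma \ref{Rota sign theorem}); together these give $\overline{X}_{\Gamma} = \sum_{Y}|\mu(\hat{0},Y)|\,p_{\type(Y)}$ (Theorems \ref{oXA} and \ref{power sum expansion flat}), which is the half of the theorem you have not supplied.

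Your second route has a further conceptual obstruction you should notice: $\omega$ is defined only on $\SSym_{\mathbb{Q}}$ (via Theorem \ref{power sum basis}), and the monomial sum over a \emph{single} chamber closure is not a signed-symmetric function, so a chamber-by-chamber $\omega$-reciprocity does not even typecheck; the reciprocity only materializes after summing over all chambers and regrouping by flats. There is also a bookkeeping mismatch between your two expansions: the open side is indexed by edge subsets $S$ with signs $(-1)^{|S|}$, while the correct closed side has nonnegative coefficients $|\mu(\hat{0},Y)|$ indexed by flats, so a term-by-term comparison first requires collapsing subsets to flats via M\"{o}bius inversion (i.e., passing from Theorem \ref{power sum expansion subset} to Theorem \ref{power sum expansion flat}). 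To your credit, your per-component signs are consistent with the paper's mechanism: $(-1)^{|K|-1}$ over balanced components times $(-1)^{|K|}$ over unbalanced ones multiplies out to $(-1)^{\ell - b(Y)} = (-1)^{\codim Y}$ by Zaslavsky's rank formula (Lemma \ref{Zaslavsky rank}), and $\mu(\hat{0},Y)(-1)^{\codim Y} = |\mu(\hat{0},Y)|$ by Rota---but those two lemmas, which carry the substance of the proof, are exactly what your sketch never establishes.
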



In order to state the other result, we introduce signed trees and paths. 
A \textbf{signed tree} (resp. \textbf{signed path}) is a signed graph $ T = (V_{T}, E_{T}^{+}, E_{T}^{-}, \varnothing) $ such that $ E_{T}^{+} \cap E_{T}^{-} = \varnothing $ and the simple graph $ (V_{T}, E_{T}^{+}\cup E_{T}^{-}) $ is a tree (resp. path). 

We may generalize Conjecture \ref{Stanley tree conjecture} as follows. 

\begin{problem}\label{question signed path}
Does the chromatic signed-symmetric function distinguish signed trees? 
Namely, if $ T_{1} $ and $ T_{2} $ are signed trees with $ X_{T_{1}} = X_{T_{2}} $, then are $ T_{1} $ and $ T_{2} $ isomorphic? 
\end{problem}

Even in the case of signed paths, the problem is hard. 
We have checked the chromatic signed-symmetric functions of signed paths with a computer. 
The question in Problem \ref{question signed path} is affirmative for signed paths with up to $ 15 $ vertices. 
 
Let $ \boldsymbol{\alpha} = (\alpha_{1}, \dots, \alpha_{\ell}) \in \mathbb{Z}_{>0}^{\ell} $ denote an integer composition of length $ \ell $. 
Let $ P_{\boldsymbol{\alpha}} $ be the signed path obtained by connecting paths consisting of positive edges $ P_{\alpha_{1}}, \dots, P_{\alpha_{\ell}} $ with negative edges in this order. 
Note that every signed path is isomorphic to $ P_{\boldsymbol{\alpha}} $ for some $ \boldsymbol{\alpha} $. 
We say that a composition $ \boldsymbol{\alpha} $ is \textbf{unimodal} if there exists $ t \in \{1, \dots, \ell\} $ such that $ \alpha_{1} \leq \dots \leq \alpha_{t} \geq \dots \geq \alpha_{\ell} $. 


\begin{theorem}\label{main signed paths}
Suppose that $ X_{P_{\boldsymbol{\alpha}}} = X_{P_{\boldsymbol{\beta}}} $. 
If one of the following conditions holds, then $ P_{\boldsymbol{\alpha}} $ and $ P_{\boldsymbol{\beta}} $ are isomorphic. 
\begin{enumerate}[(1)]
\item The length of $ \boldsymbol{\alpha} $ is less than or equal to $ 4 $. 
\item $ \boldsymbol{\alpha} $ and $ \boldsymbol{\beta} $ are unimodal. 
\end{enumerate}
\end{theorem}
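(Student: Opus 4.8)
The plan is to pass to the power-sum expansion of $X_{P_{\boldsymbol{\alpha}}}$ and to recover $\boldsymbol{\alpha}$, up to reversal, from its coefficients. First I would establish, by inclusion--exclusion over the edge set exactly as for the classical $X_{\Gamma}$, that for a signed graph all of whose components are balanced---in particular for any signed forest, hence any signed path---
\begin{align*}
X_{\Gamma} = \sum_{S \subseteq E_{\Gamma}} (-1)^{|S|} \prod_{K} p_{\type(K)},
\end{align*}
where $K$ runs over the connected components of the spanning subgraph $(V_{\Gamma}, S)$ and $\type(K) = (a_K, b_K)$ records the two switching-classes of vertices of $K$, so that $K$ contributes $\sum_{c \in \mathbb{Z}} x_{c}^{a_K} x_{-c}^{b_K}$, i.e. the generator $p_{(a_K,b_K)}$. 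For a path on $N$ vertices it is cleaner to sum over the complementary set $D = E \setminus S$ of deleted edges, since deleting $D$ cuts the path into $|D|+1$ consecutive intervals:
\begin{align*}
X_{P_{\boldsymbol{\alpha}}} = \sum_{D} (-1)^{N-1-|D|} \prod_{\text{intervals } I} p_{\type(I)}.
\end{align*}

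Because the $p_{(a,b)}$ are algebraically independent generators (Theorem \ref{power sum basis}), the coefficient of any monomial $\prod_{t} p_{t}^{m_t}$ in $X_{P_{\boldsymbol{\alpha}}}$ is a well-defined isomorphism invariant, and by the displays above it equals $(-1)^{N-\sum_t m_t}$ times the number of partitions of the path into consecutive intervals whose multiset of types is the prescribed one. From this dictionary I would read off the basic invariants: the degree recovers $N = \sum_i \alpha_i$; the coefficient of $p_{(1,1)} p_{(1,0)}^{N-2}$ counts the negative edges and so recovers $\ell$; the unique degree-$N$ generator $p_{(a,b)}$ occurs with coefficient $(-1)^{N-1}$ and records the global type $\{a,b\} = \{\sum_{i \text{ odd}} \alpha_i, \sum_{i \text{ even}} \alpha_i\}$. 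Most importantly, an interval is monochromatic of type $(m,0)$ iff it contains no negative edge, so the partitions into monochromatic intervals are exactly those deleting all $\ell-1$ negative edges together with some positive edges; among products of the generators $p_{(m,0)}$ the minimal number of factors is therefore $\ell$, realized by the unique coarsest such partition, giving the monomial $\prod_{i} p_{(\alpha_i,0)}$ with coefficient $\pm 1$. This recovers the entire multiset of parts $\{\alpha_1, \dots, \alpha_\ell\}$.

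For part (1) I would argue by cases on $\ell \le 4$. For $\ell \le 2$ the multiset of parts already determines $\boldsymbol{\alpha}$ up to reversal. For $\ell = 3$ the global type additionally pins down the middle part: since any two of the three positive parts sum to strictly less than $N$, the map $\alpha_2 \mapsto \{N-\alpha_2, \alpha_2\}$ is injective on the multiset, so the global type singles out $\alpha_2$ and hence the composition up to reversal. The case $\ell = 4$ is the substantial one, since here the multiset together with the global type determines only the sums $\alpha_1+\alpha_3$ and $\alpha_2+\alpha_4$; one must bring in further interval-count invariants---e.g. the two-interval coefficients, which record the multiset of prefix imbalances $\{|2A_k - k|\}_k$ where $A_k$ is the number of $+$ vertices among the first $k$---to separate the finitely many remaining arrangements.

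For part (2) the multiset of parts is known, and the only remaining freedom in a unimodal composition is the distribution of the parts to the ascending and the descending side of the peak, up to a global reversal. I would encode the path by its zigzag lattice walk $W$, which rises for $\alpha_1$ steps, falls for $\alpha_2$ steps, rises for $\alpha_3$, and so on, so that $\boldsymbol{\alpha}$ is precisely the sequence of run-lengths of $W$; the two-interval coefficients then determine the multiset $\{|W_k|\}$ of prefix imbalances, with the higher interval-counts supplying the analogous data for all intervals. The task reduces to reconstructing $W$---equivalently its turning values $T_j = \sum_{i \le j} (-1)^{i-1}\alpha_i$---from these absolute-value data, and the main obstacle is exactly that $|W|$ alone does not determine $W$: excursions between consecutive returns to a common level may be reflected independently, which corresponds to reshuffling parts between the two sides of the peak. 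I expect unimodality of the run-lengths to forbid all such reflections, since under $\alpha_1 \le \dots \le \alpha_t \ge \dots \ge \alpha_\ell$ the successive gaps of the turning values are monotone on each side of the peak, and I would show this monotonicity is incompatible with any nontrivial excursion-reflection, forcing a unique $W$ up to reversal. Carrying out this incompatibility argument rigorously---verifying that the global type, the parts multiset, and the prefix-type multiset together eliminate every competing unimodal arrangement---is the crux of the proof.
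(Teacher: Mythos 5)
Your setup is sound: the interval expansion is exactly Theorem \ref{power sum expansion subset} specialized to paths, and your coefficient extractions are correct --- in particular, recovering the multiset $\{\alpha_{1}, \dots, \alpha_{\ell}\}$ from the unique $\ell$-factor monomial $\prod_{i} p_{\begin{psmallmatrix} \alpha_{i} \\ 0 \end{psmallmatrix}}$ (coming only from $S = E^{+}$) is a nice direct alternative to the paper's Lemma \ref{signed path partition type}, which instead projects via $\pi$ to $\Sym_{\mathbb{Q}}$ and uses irreducibility plus unique factorization. Your $\ell \leq 3$ arguments are complete. But for $\ell = 4$ you stop at ``bring in further interval-count invariants \dots to separate the finitely many remaining arrangements,'' which is a plan, not a proof, and it is the substantive half of part (1). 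The missing step is in fact available inside your framework: count the decompositions in which \emph{every} interval contains a negative edge. With three such intervals the count is $(\alpha_{2}-1)(\alpha_{3}-1)$ and with two it is $\alpha_{2}+\alpha_{3}-1$, which together force $\{\alpha_{2},\alpha_{3}\} = \{\beta_{2},\beta_{3}\}$; in the residual case $\alpha_{1}=\beta_{4}$, $\alpha_{4}=\beta_{1}$ the global type $\{\alpha_{1}+\alpha_{3},\, \alpha_{2}+\alpha_{4}\}$ then forces $\alpha_{1}=\alpha_{4}$ or $\alpha_{2}=\alpha_{3}$, hence $\boldsymbol{\alpha} \in \{\boldsymbol{\beta}, \boldsymbol{\beta}^{r}\}$. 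This is the paper's Lemma \ref{signed path length at most 4}, and you would need to write it out rather than gesture at two-interval data (which, being a multiset over all single-edge deletions mixing positive and negative cuts, you have not shown to disentangle the cases).

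Part (2) is the genuine gap. You propose reconstructing the zigzag walk from multisets of prefix imbalances and ruling out excursion reflections by monotonicity of the turning gaps, but you explicitly defer this (``the crux of the proof''), and it is not clear your invariants suffice: multisets of interval types carry no positional information, and recovering a walk from absolute-value data is exactly the kind of reflection-ambiguous problem you identify. The paper's Lemma \ref{signed path unimodal} avoids reconstruction entirely by an algebraic peeling argument: since $\alpha_{t}$ is the maximal part, an interval of type $\begin{psmallmatrix} \alpha_{t} \\ 0 \end{psmallmatrix}$ must be a full peak block, so the top power of $p_{\begin{psmallmatrix} \alpha_{t} \\ 0 \end{psmallmatrix}}$ appearing in $X_{P_{\boldsymbol{\alpha}}}$ equals the length of the peak run and its cofactor is $X_{P_{\boldsymbol{\alpha}'} \sqcup P_{\boldsymbol{\alpha}''}}$, where $\boldsymbol{\alpha}'$, $\boldsymbol{\alpha}''$ are the two monotone flanks; unimodality is used only to guarantee the maximal parts sit consecutively at the peak. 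Then unique factorization in the polynomial ring $\SSym_{\mathbb{Q}}$ (Theorem \ref{power sum basis}) together with irreducibility of chromatic signed-symmetric functions of connected signed graphs splits $X_{P_{\boldsymbol{\alpha}'}}X_{P_{\boldsymbol{\alpha}''}} = X_{P_{\boldsymbol{\beta}'}}X_{P_{\boldsymbol{\beta}''}}$ (the crosswise matching being reversal), and induction plus monotonicity of the flanks gives $\boldsymbol{\alpha} = \boldsymbol{\beta}$. I recommend you either adopt this divisibility-and-induction route or actually prove your reflection-rigidity claim; as written, your proposal establishes the theorem only for $\ell \leq 3$.
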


The organization of this article is as follows. 
In \S \ref{sec: preliminaries}, we introduce signed-symmetric functions and prove that the ring of signed-symmetric functions is a free commutative algebra generated by $ x_{0} $ and the power sum signed-symmetric functions $ p_{\begin{psmallmatrix}
a \\ b
\end{psmallmatrix}} $ (Theorem \ref{power sum basis}). 
We also review some notions for arrangements and signed-graphs
Especially we prove the key theorems of this article (Theorem \ref{XA} and \ref{oXA}), which are essentially showed by Beck and Zaslavsky \cite{beck2006inside-out-aim}. 

In \S \ref{sec:chromatic signed-symmetric functions}, we study the chromatic signed-symmetric functions. 
An important property of chromatic signed-symmetric functions is the power sum expansion (Theorem \ref{power sum expansion subset} and \ref{power sum expansion flat}). 
Using the power sum expansion, we prove the main theorems.

\section{Preliminaries}\label{sec: preliminaries}

%
%

\subsection{Ring of signed-symmetric functions}\label{Subsec ho functions}
Let $ \mathbb{Z}\llbracket \boldsymbol{x}\rrbracket $ denote the ring of formal power series over $ \mathbb{Z} $ in the indeterminates $ \boldsymbol{x} = (x_{i})_{i \in \mathbb{Z}} $. 
\begin{definition}\label{definition of signed-symmetric functions}
A formal power series $ f \in \mathbb{Z}\llbracket\boldsymbol{x}\rrbracket $ is called a \textbf{signed-symmetric function} if the following conditions are satisfied. 
\begin{enumerate}[(i)]
\item The degrees of the monomials in $ f $ are bounded. 
\item $ f $ is invariant under the action of signed permutations, that is, the action defined by $ \sigma(x_{i}) = x_{\sigma(i)} $ with the permutations $ \sigma $ on $ \mathbb{Z} $ such that $ \sigma(-i) = -\sigma(i) $ for all $ i \in \mathbb{Z} $. 
\end{enumerate}
The signed-symmetric functions form a subring of $ \mathbb{Z}\llbracket \boldsymbol{x} \rrbracket $. 
We call it the \textbf{ring of signed-symmetric functions} over $ \mathbb{Z} $ , denoted by $ \SSym_{\mathbb{Z}} $. 
Moreover, let $ \SSym_{\mathbb{Q}} \coloneqq \SSym_{\mathbb{Z}} \otimes_{\mathbb{Z}} \mathbb{Q} $. 
\end{definition}

Recall that every well-known basis for the ring of symmetric functions, including the monomial basis or the power sum basis, is indexed by integer partitions. 
In order to introduce bases for $ \SSym_{\mathbb{Q}} $, we will define an analogous notion of integer partitions as follows. 

Consider $ 2 \times r $ rectangular arrays whose entries are nonnegative integers such that every column contains a positive integer. 
We identify two such arrays if one is obtained from the other by permuting columns and interchanging entries in some columns. 
For example, 
\begin{align*}
\begin{pmatrix}
1 \\ 2
\end{pmatrix} = \begin{pmatrix}
2 \\ 1
\end{pmatrix} \quad \text{ and } \quad
\begin{pmatrix}
2 & 1 & 1 \\
0 & 1 & 0
\end{pmatrix} = \begin{pmatrix}
1 & 1 & 0  \\
1 & 0 & 2 
\end{pmatrix}. 
\end{align*}
We will use the symbols $ \lambda $ and $ \mu $ for the arrays. 
A \textbf{partition} is a pair $ (u, \lambda) $, where $ u $ is a nonnegative integer, which is considered to be an analogue of a usual integer partition. 
When $ u=0 $, we will identify $ (0,\lambda) $ with $ \lambda $. 

Every monomial in $ \mathbb{Z}\llbracket \boldsymbol{x} \rrbracket $ with coefficient $ 1 $ is of the form 
\begin{align*}
x_{0}^{u}x_{i_{1}}^{a_{1}}x_{-i_{1}}^{b_{1}} x_{i_{2}}^{a_{2}}x_{-i_{2}}^{b_{2}} \cdots x_{i_{r}}^{a_{r}}x_{-i_{r}}^{b_{r}}, 
\end{align*}
where $ r \geq 0 $, at least one of $ a_{i_{j}} $ and $ b_{i_{j}} $ is positive for each $ j \in \{1,\dots, r\} $, and $ \{\pm i_{j}\} \cap \{\pm i_{k} \} = \varnothing $ if $ j \neq k $. 
Then the pair $ (u,\lambda) $ is called the \textbf{type} of the monomial, where $ \lambda $ denotes the array
\begin{align*}
\begin{pmatrix}
a_{1} & a_{2} & \dots & a_{r} \\
b_{1} & b_{2} & \dots & b_{r}
\end{pmatrix}. 
\end{align*}

\begin{definition}
The \textbf{monomial signed-symmetric function} $ m_{(u,\lambda)} $ is the sum of all the monomials with coefficient $ 1 $ of type $ (u,\lambda) $. 
\end{definition}

\begin{proposition}\label{monomial basis}
The set $ \{m_{(u,\lambda)}\}_{(u,\lambda)} $ is a basis for $ \SSym_{\mathbb{Z}} $ as a module. 
\end{proposition}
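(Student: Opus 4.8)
The plan is to mirror the classical argument showing that the monomial symmetric functions form a $\mathbb{Z}$-basis of $\Sym_{\mathbb{Z}}$, adapting each step to the signed setting and to the degree-boundedness condition. The whole argument rests on one combinatorial fact: the orbits of monomials of $\mathbb{Z}\llbracket\boldsymbol{x}\rrbracket$ under the signed-permutation action are exactly the sets of monomials sharing a common type. So the first thing I would establish is this orbit description, after which spanning and independence become routine.

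First I would record that a signed permutation $\sigma$ fixes $0$ (since $\sigma(0)=\sigma(-0)=-\sigma(0)$) and permutes the unordered pairs $\{i,-i\}$, possibly interchanging their two members. Consequently $\sigma$ carries a monomial $x_0^{u}x_{i_1}^{a_1}x_{-i_1}^{b_1}\cdots x_{i_r}^{a_r}x_{-i_r}^{b_r}$ to another monomial of the same type: the exponent $u$ of $x_0$ is unchanged, and each column $\begin{psmallmatrix} a_j \\ b_j \end{psmallmatrix}$ is sent to the column of the image pair, with its two entries possibly swapped, which is exactly the identification built into the definition of a type. Conversely, given two monomials of equal type, I would construct a signed permutation matching them: pair up the $r$ index-pairs according to a bijection of their column multisets (which exists since the two arrays agree up to permuting columns and interchanging entries), choose within each pair the sign that aligns the $a_j$'s and $b_j$'s, fix $0$, and extend arbitrarily on the unused pairs. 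Hence type is a complete invariant of orbits, and each $m_{(u,\lambda)}$ is precisely one orbit sum; it is invariant by construction and homogeneous of degree $u+\sum_j(a_j+b_j)$, hence degree-bounded, so $m_{(u,\lambda)}\in\SSym_{\mathbb{Z}}$.

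Next I would prove spanning together with the crucial finiteness. Given $f\in\SSym_{\mathbb{Z}}$, invariance forces the coefficient of a monomial in $f$ to depend only on its type; call the common value $c_{(u,\lambda)}$. I would then observe that there are only finitely many types of each total degree $d$: one has $u\le d$, while the $2\times r$ array has entries summing to $d-u$ with $r\le d-u$, leaving finitely many possibilities. Since $f$ is degree-bounded, only finitely many $c_{(u,\lambda)}$ are nonzero, so $\sum c_{(u,\lambda)}m_{(u,\lambda)}$ is a genuine finite sum; subtracting it from $f$ kills every monomial, whence the difference is $0$ and $f=\sum c_{(u,\lambda)}m_{(u,\lambda)}$. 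Linear independence is immediate from the orbit description, because distinct types have disjoint monomial supports: reading off the coefficient of a single representative monomial of each type shows that any vanishing $\mathbb{Z}$-linear combination has all coefficients zero.

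I expect the only genuinely delicate point to be the converse direction of the orbit description, namely verifying that two monomials of equal type really lie in a single orbit, since this is where the signed structure must be used correctly: the freedom to interchange the two entries of a column reflects the swap $i_j\leftrightarrow -i_j$, and the forced equality $\sigma(0)=0$ pins down the $x_0^{u}$ part. Everything else is bookkeeping, and the finiteness remark is what upgrades $\{m_{(u,\lambda)}\}$ from a mere topological generating set to a basis in the module-theoretic sense of finite $\mathbb{Z}$-linear combinations.
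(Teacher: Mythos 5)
Your proof is correct and follows essentially the same route as the paper's: the paper's (much terser) argument likewise rests on type being invariant under signed permutations, degree-boundedness forcing a finite expansion, and uniqueness from disjoint monomial supports. Your explicit verification that monomials of equal type lie in a single orbit fills in the transitivity step the paper leaves implicit, but it is an elaboration of the same argument rather than a different one.
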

\begin{proof}
Since the type of a monomial is invariant under the action of signed-permutations, $ m_{(u,\lambda)} $ is a signed-symmetric function. 
Moreover, every signed-symmetric function $ f $ is expressed as a linear combination of finitely many monomial signed-symmetric functions since the degrees of monomials in $ f $ is bounded. 
Clearly, the expression is unique. 
\end{proof}

\begin{remark}
Note that Egge \cite{egge2016chromatic} studied the subring of $ \SSym_{\mathbb{Q}} $ which consists of all the signed-symmetric functions not containing the indeterminate $ x_{0} $ and mentioned that the monomial signed-symmetric functions $ m_{\lambda} = m_{(0,\lambda)} $ form a basis for the ring. 
Also, note that $ m_{(u,\lambda)} = x_{0}^{u}m_{\lambda} $. 
\end{remark}

\begin{definition}
Define the \textbf{power sum signed-symmetric function} $ p_{(u,\lambda)} $ by 
\begin{align*}
p_{(u,\lambda)} \coloneqq x_{0}^{u} p_{\begin{psmallmatrix} a_{1} \\ b_{1} \end{psmallmatrix}} 
p_{\begin{psmallmatrix} a_{2} \\ b_{2} \end{psmallmatrix}} \cdots p_{\begin{psmallmatrix} a_{r} \\ b_{r} \end{psmallmatrix}}, 
\end{align*}
where 
\begin{align*}
\lambda = \begin{pmatrix}
a_{1} & a_{2} & \cdots & a_{r} \\
b_{1} & b_{2} & \cdots & b_{r}
\end{pmatrix} \quad \text{ and } \quad 
p_{\begin{psmallmatrix} a \\ b \end{psmallmatrix}} = \sum_{i \in \mathbb{Z}}x_{i}^{a}x_{-i}^{b}. 
\end{align*}
\end{definition}

\begin{theorem}\label{power sum basis}
The set $ \{p_{(u,\lambda)}\}_{(u,\lambda)} $ is a basis for $ \SSym_{\mathbb{Q}} $ as a vector space over $ \mathbb{Q} $. 
Especially, $ \SSym_{\mathbb{Q}} $ is a free commutative algebra generated by $ x_{0} $ and the power sum signed-symmetric functions $ p_{\begin{psmallmatrix} a \\ b \end{psmallmatrix}} $ with $ a \geq b $ over $ \mathbb{Q} $. 
\end{theorem}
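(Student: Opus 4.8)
I want to show that $\{p_{(u,\lambda)}\}$ is a $\mathbb{Q}$-basis for $\mathrm{SSym}_{\mathbb{Q}}$. By Proposition \ref{monomial basis}, the monomial functions $\{m_{(u,\lambda)}\}$ already form a $\mathbb{Z}$-basis (hence a $\mathbb{Q}$-basis after tensoring), and both families are indexed by the same set of pairs $(u,\lambda)$. So the cleanest route is to prove that the change-of-basis matrix from $\{p_{(u,\lambda)}\}$ to $\{m_{(u,\lambda)}\}$ is triangular and invertible over $\mathbb{Q}$ with respect to a suitable partial order on the index set. This is the standard strategy for the ordinary power-sum basis, and I expect it to transfer.

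Let me think about what "the author's" proof likely does versus what I would do. First, let me set up the ordering. Let me define $|\lambda| = \sum_j (a_j + b_j)$, the total degree of the non-$x_0$ part, and let $r(\lambda)$ be the number of columns. A monomial $x_0^u x_{i_1}^{a_1}x_{-i_1}^{b_1}\cdots x_{i_r}^{a_r}x_{-i_r}^{b_r}$ of type $(u,\lambda)$ has total degree $u + |\lambda|$. I first observe that $p_{(u,\lambda)}$ is homogeneous of degree $u+|\lambda|$, and that its expansion in the monomial basis involves only monomial functions $m_{(u',\mu)}$ of the same total degree. Within a fixed degree, the relevant refinement data is the partition $\lambda$ (an array up to the column/row identifications), and I want to order these arrays by refinement: $\mu \preceq \lambda$ if $\mu$ is obtained from $\lambda$ by merging columns (adding them entrywise, possibly after swapping entries within a column). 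This is the signed analogue of the refinement order on integer partitions.

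**Key computation.** Expanding the product
\[
p_{(u,\lambda)} = x_0^u \prod_{j=1}^{r} \Big(\sum_{i \in \mathbb{Z}} x_i^{a_j} x_{-i}^{b_j}\Big)
\]
and collecting terms, each monomial comes from choosing an index $i_j$ for each factor. If all chosen indices are distinct in absolute value (and nonzero), the resulting monomial has type exactly $(u,\lambda)$; when some indices coincide, the corresponding columns merge, yielding a type $(u,\mu)$ with $\mu \prec \lambda$ strictly coarser. Hence
\[
p_{(u,\lambda)} = c_{\lambda}\, m_{(u,\lambda)} + \sum_{\mu \prec \lambda} c_{\lambda\mu}\, m_{(u,\mu)},
\]
where the leading coefficient $c_\lambda$ is a positive integer (it counts the number of orderings/sign-choices of columns giving back $\lambda$, i.e.\ an automorphism-type factor, and is in particular nonzero). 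The only subtle point is verifying $c_\lambda \neq 0$ and that no monomial of an incomparable type appears; both follow from tracking which absolute-value collisions are possible. This establishes that the transition matrix, restricted to each total degree and ordered by refinement (with $u$ fixed, since the $x_0$-exponent is preserved exactly), is upper-triangular with nonzero diagonal entries $c_\lambda \in \mathbb{Z}_{>0}$. The main obstacle, and the only place real care is needed, is making the refinement partial order and the collision bookkeeping precise enough that triangularity is rigorous rather than merely plausible; the identifications defining arrays (permuting columns, swapping entries within a column) must be handled so that $\prec$ is well-defined on equivalence classes.

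**Conclusion and the algebra statement.** Since the matrix is triangular with nonzero diagonal over $\mathbb{Q}$, it is invertible, so $\{p_{(u,\lambda)}\}$ spans and is linearly independent, giving the basis claim. For the final sentence, I note that every $p_{(u,\lambda)}$ is by definition a monomial in $x_0$ and the generators $p_{\left(\begin{smallmatrix} a\\ b\end{smallmatrix}\right)}$ with $a \ge b$ (using the identification $\left(\begin{smallmatrix} a\\ b\end{smallmatrix}\right)=\left(\begin{smallmatrix} b\\ a\end{smallmatrix}\right)$ to arrange $a \ge b$), and distinct pairs $(u,\lambda)$ give distinct such monomials. Because these monomials form a $\mathbb{Q}$-basis, there are no polynomial relations among the generators, which is exactly the assertion that $\mathrm{SSym}_{\mathbb{Q}}$ is the free commutative $\mathbb{Q}$-algebra on $x_0$ and $\{p_{\left(\begin{smallmatrix} a\\ b\end{smallmatrix}\right)} : a \ge b\}$.
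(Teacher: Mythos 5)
Your overall strategy is the same as the paper's: expand each $p_{(u,\lambda)}$ in the monomial basis $\{m_{(u,\lambda)}\}$ of Proposition \ref{monomial basis}, show the transition matrix is triangular with nonzero diagonal for a suitable partial order on partitions, and deduce the free-algebra statement from the bijection between pairs $(u,\lambda)$ and monomials in $x_{0}$ and the $p_{\begin{psmallmatrix} a \\ b \end{psmallmatrix}}$ with $a \geq b$. However, your key triangularity claim contains a genuine error: the $x_{0}$-exponent is \emph{not} preserved in the expansion. In the factor $p_{\begin{psmallmatrix} a_{j} \\ b_{j} \end{psmallmatrix}} = \sum_{i \in \mathbb{Z}} x_{i}^{a_{j}} x_{-i}^{b_{j}}$ the summand with $i = 0$ contributes $x_{0}^{a_{j}+b_{j}}$, so that column is absorbed into the $x_{0}$-part and the resulting monomial has type $(u + a_{j} + b_{j}, \mu)$ with $\mu$ having fewer columns, not type $(u,\mu)$. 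The simplest instance is $p_{\begin{psmallmatrix} 1 \\ 0 \end{psmallmatrix}} = x_{0} + \sum_{i \neq 0} x_{i} = m_{(1,\varnothing)} + m_{(0,\begin{psmallmatrix} 1 \\ 0 \end{psmallmatrix})}$, which is already not of your claimed form; the terms you dismissed as ``incomparable types that do not appear'' are exactly these cross-$u$ terms. The paper's remark following the theorem makes the phenomenon visible: $m_{\begin{psmallmatrix} 1 & 1 \\ 0 & 0 \end{psmallmatrix}} = \tfrac{1}{2} p_{\begin{psmallmatrix} 1 & 1 \\ 0 & 0 \end{psmallmatrix}} - \tfrac{1}{2} p_{\begin{psmallmatrix} 2 \\ 0 \end{psmallmatrix}} - x_{0} p_{\begin{psmallmatrix} 1 \\ 0 \end{psmallmatrix}} + x_{0}^{2}$, where the last two terms mix different values of $u$. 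So your matrix is not block-diagonal in $u$, and triangularity ``with $u$ fixed'' fails.

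The repair is precisely the order the paper uses: say $\lambda$ covers $\mu$ when $\lambda$ is obtained by summing two columns in a representative of $\mu$ (so coarser arrays are \emph{larger}), and set $(u,\lambda) < (v,\mu)$ if $u < v$, or $u = v$ and $\lambda < \mu$. Under this order every collision in your expansion moves strictly up: two chosen indices with equal nonzero absolute value merge two columns (same $u$, strictly larger array), while a chosen index equal to $0$ strictly increases $u$. Hence $p_{(u,\lambda)} = \sum_{(v,\mu) \geq (u,\lambda)} R_{(u,\lambda),(v,\mu)}\, m_{(v,\mu)}$ with positive diagonal coefficient, and your conclusion (invertibility of the triangular matrix, hence the basis claim, hence algebraic independence of $x_{0}$ and the $p_{\begin{psmallmatrix} a \\ b \end{psmallmatrix}}$ with $a \geq b$) then goes through exactly as in the paper; your homogeneity observation and the final free-algebra deduction are correct as written.
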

\begin{proof}
First, we define a partial order on the partitions. 
We say that $ \lambda $ covers $ \mu $ if $ \lambda $ can be obtained by summing two column vectors in a representative of $ \mu $. 
See Figure \ref{Fig Hasse} for an example. 
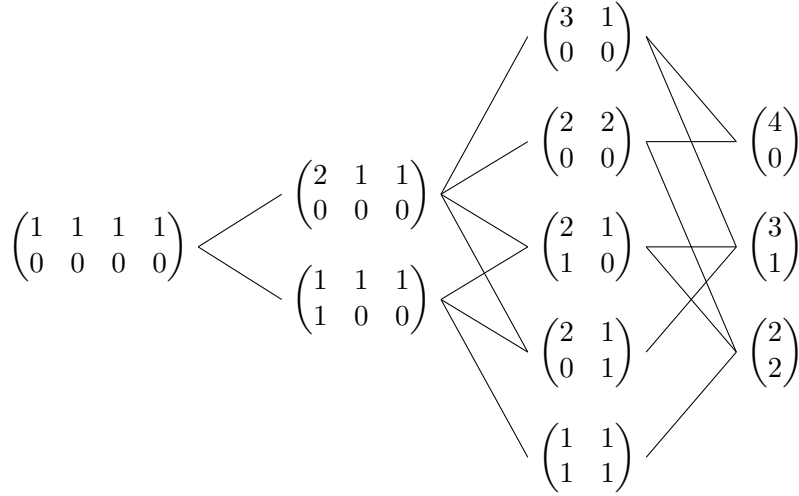
\begin{figure}[t]
\centering
\begin{tikzpicture}
\draw (0,0) node (0a) {$ \begin{pmatrix}
1 & 1 & 1 & 1 \\
0 & 0 & 0 & 0
\end{pmatrix} $}; 
\draw (3.5,0.7) node (1a) {$ \begin{pmatrix}
2 & 1 & 1  \\
0 & 0 & 0 
\end{pmatrix} $}; 
\draw (3.5,-0.7) node (1b) {$ \begin{pmatrix}
1 & 1 & 1  \\
1 & 0 & 0 
\end{pmatrix} $}; 
\draw (6.5,2.8) node (2a) {$ \begin{pmatrix}
3 & 1  \\
0 & 0 
\end{pmatrix} $}; 
\draw (6.5,1.4) node (2b) {$ \begin{pmatrix}
2 & 2  \\
0 & 0 
\end{pmatrix} $}; 
\draw (6.5,0) node (2c) {$ \begin{pmatrix}
2 & 1  \\
1 & 0 
\end{pmatrix} $}; 
\draw (6.5,-1.4) node (2d) {$ \begin{pmatrix}
2 & 1  \\
0 & 1 
\end{pmatrix} $}; 
\draw (6.5,-2.8) node (2e) {$ \begin{pmatrix}
1 & 1  \\
1 & 1 
\end{pmatrix} $}; 
\draw (9,1.4) node (3a) {$ \begin{pmatrix}
4   \\
0  
\end{pmatrix} $}; 
\draw (9,0) node (3b) {$ \begin{pmatrix}
3   \\
1  
\end{pmatrix} $}; 
\draw (9,-1.4) node (3c) {$ \begin{pmatrix}
2   \\
2  
\end{pmatrix} $}; 
\draw (0a.east)--(1a.west);
\draw (0a.east)--(1b.west);
\draw (1a.east)--(2a.west);
\draw (1a.east)--(2b.west);
\draw (1a.east)--(2c.west);
\draw (1a.east)--(2d.west);
\draw (1b.east)--(2c.west);
\draw (1b.east)--(2d.west);
\draw (1b.east)--(2e.west);
\draw (2a.east)--(3a.west);
\draw (2a.east)--(3b.west);
\draw (2b.east)--(3a.west);
\draw (2b.east)--(3c.west);
\draw (2c.east)--(3b.west);
\draw (2c.east)--(3c.west);
\draw (2d.east)--(3b.west);
\draw (2e.east)--(3c.west);
\end{tikzpicture}
\caption{Hasse diagram}
\label{Fig Hasse}
\end{figure}
Furthermore, we define $ (u,\lambda) < (v,\mu) $ if $ u < v $ or both of $ u=v $ and $ \lambda < \mu $ hold. 

One can deduce that there exist nonnegative integers $ R_{(u,\lambda), (v,\mu)} $ such that 
\begin{align*}
p_{(u,\lambda)} = \sum_{(v,\mu) \geq (u,\lambda)}R_{(u,\lambda), (v,\mu)}m_{(v,\mu)} \quad \text{ and } \quad R_{(u,\lambda), (v,\lambda)} > 0. 
\end{align*}
By Proposition \ref{monomial basis}, $ \{m_{(u,\lambda)}\}_{(u,\lambda)} $ is a basis for $ \SSym_{\mathbb{Q}} $ and hence the equalities show that $ \{p_{(u,\lambda)}\}_{(u,\lambda)} $ is also a basis for $ \SSym_{\mathbb{Q}} $. 
\end{proof}

\begin{problem}
Is there a combinatorial interpretation of the coefficients $ R_{(u,\lambda), (v,\lambda)} $? 
See \cite[Proposition 7.7.1]{stanley1999enumerative} for a similar result about symmetric functions. 
\end{problem}

\begin{remark}
Since 
\begin{align*}
m_{\begin{psmallmatrix}
1 & 1 \\
0 & 0
\end{psmallmatrix}} = \dfrac{1}{2}p_{\begin{psmallmatrix}
1 & 1 \\
0 & 0
\end{psmallmatrix}} -\dfrac{1}{2}p_{\begin{psmallmatrix}
2 \\
0
\end{psmallmatrix}}-x_{0}p_{\begin{psmallmatrix}
1 \\ 
0
\end{psmallmatrix}}+x_{0}^{2}, 
\end{align*}
the set $ \{p_{(u,\lambda)}\}_{(u,\lambda)} $ is \emph{not} a basis for $ \SSym_{\mathbb{Z}} $, which is analogous to the case of symmetric functions. 
\end{remark}
\begin{problem}
Is $ \SSym_{\mathbb{Z}} $ a free commutative ring? 
\end{problem}

Thanks to Theorem \ref{power sum basis}, we may define the involution $ \omega $ as follows. 
\begin{definition}
Define an involution $ \omega \colon \SSym_{\mathbb{Q}} \rightarrow \SSym_{\mathbb{Q}} $ by 
\begin{align*}
\omega x_{0} \coloneqq -x_{0} \quad \text{ and } \quad \omega p_{\begin{psmallmatrix} a \\ b \end{psmallmatrix}} \coloneqq (-1)^{a+b-1} p_{\begin{psmallmatrix} a \\ b \end{psmallmatrix}}, 
\end{align*}
or equivalently 
\begin{align*}
\omega p_{(u,\lambda)} = (-1)^{u+\sum_{i=1}^{r}(a_{i}+b_{i})-r}p_{(u,\lambda)}, \quad \text{ where } \quad \lambda = \begin{pmatrix}
a_{1} & a_{2} & \cdots & a_{r} \\
b_{1} & b_{2} & \cdots & b_{r}
\end{pmatrix}. 
\end{align*}
\end{definition}

Let $ \Sym_{\mathbb{Q}} $ denote the ring of ordinary symmetric functions over $ \mathbb{Q} $. 
This ring is generated by the power sum symmetric functions $ p_{k} $, which are algebraically independent over $ \mathbb{Q} $. 
Hence we may define the embedding $ \iota \colon \Sym_{\mathbb{Q}} \to \SSym_{\mathbb{Q}} $ by
\begin{align*}
\iota(p_{k}) \coloneqq p_{\begin{psmallmatrix}
k \\ 0
\end{psmallmatrix}}. 
\end{align*}

We can obtain a symmetric function from a signed-symmetric function by substituting $ x_{i} = 0 $ for $ i \leq 0 $ and remaining $ x_{i} $ for $ i > 0 $ since a permutation $ \sigma $ on the positive integers can be extended to a signed-symmetric permutation by $ \sigma(0)\coloneqq0 $ and $ \sigma(i) \coloneqq -\sigma(-i) $ for $ i < 0 $. 
Let $ \pi \colon \SSym_{\mathbb{Q}} \rightarrow \Sym_{\mathbb{Q}} $ denote this assignment. 
The map $ \pi $ is a surjective homomorphism and characterized by
\begin{align*}
\pi(x_{0}) = 0, \qquad
\pi\left(p_{\begin{psmallmatrix}
a \\ b
\end{psmallmatrix}}\right) = \begin{cases}
p_{a} & (a>0, b=0) \\
0 & (a \geq b > 0)
\end{cases}
\end{align*}

The following proposition is obvious. 
\begin{proposition}\label{injection surjection}
The maps $ \iota \colon \Sym_{\mathbb{Q}} \to \SSym_{\mathbb{Q}} $ and $ \pi \colon \SSym_{\mathbb{Q}} \rightarrow \Sym_{\mathbb{Q}} $ are homomorphisms of involutive rings. 
Namely, $ \iota(\omega f) = \omega \iota(f) $ and $ \pi(\omega f) = \omega \pi(f) $. 
\end{proposition}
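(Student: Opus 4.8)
The plan is to verify the two intertwining identities $\iota(\omega f) = \omega\iota(f)$ and $\pi(\omega f) = \omega\pi(f)$ by reducing each to a check on algebra generators. Since $\iota$ and $\pi$ are $\mathbb{Q}$-algebra homomorphisms (the former because the $p_{k}$ are algebraically independent, the latter because it is induced by the substitution $x_{i}=0$ for $i\leq 0$) and since $\omega$ on each of $\Sym_{\mathbb{Q}}$ and $\SSym_{\mathbb{Q}}$ is also an algebra homomorphism, every one of the four composites $\iota\circ\omega$, $\omega\circ\iota$, $\pi\circ\omega$, $\omega\circ\pi$ is again an algebra homomorphism. Consequently an equality between two such composites holds on the whole source ring as soon as it holds on a generating set. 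This reduction is the only structural point, and it is what makes the verification routine.

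First I would treat $\iota$. The ring $\Sym_{\mathbb{Q}}$ is freely generated by the power sums $p_{k}$, so it suffices to compare $\iota(\omega p_{k})$ with $\omega\iota(p_{k})$. On the left, $\omega p_{k}=(-1)^{k-1}p_{k}$ gives $\iota(\omega p_{k})=(-1)^{k-1}p_{\begin{psmallmatrix} k \\ 0\end{psmallmatrix}}$; on the right, $\omega p_{\begin{psmallmatrix} k \\ 0\end{psmallmatrix}}=(-1)^{k+0-1}p_{\begin{psmallmatrix} k \\ 0\end{psmallmatrix}}$ gives the same value. Next I would treat $\pi$. By Theorem \ref{power sum basis}, $\SSym_{\mathbb{Q}}$ is freely generated by $x_{0}$ together with the $p_{\begin{psmallmatrix} a \\ b\end{psmallmatrix}}$ with $a\geq b$, so I check the identity on these. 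For $x_{0}$ both $\pi(\omega x_{0})=\pi(-x_{0})$ and $\omega\pi(x_{0})=\omega(0)$ vanish. For $p_{\begin{psmallmatrix} a \\ b\end{psmallmatrix}}$ I would split according to the characterization of $\pi$: when $a>0$ and $b=0$, both sides equal $(-1)^{a-1}p_{a}$; when $a\geq b>0$, both $\pi(\omega p_{\begin{psmallmatrix} a \\ b\end{psmallmatrix}})$ and $\omega\pi(p_{\begin{psmallmatrix} a \\ b\end{psmallmatrix}})$ are zero, since $\pi$ already kills $p_{\begin{psmallmatrix} a \\ b\end{psmallmatrix}}$ in that range.

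There is no genuine obstacle here; the entire content is the numerical compatibility of the sign $(-1)^{a+b-1}$ defining $\omega$ on $\SSym_{\mathbb{Q}}$ with the sign $(-1)^{k-1}$ defining $\omega$ on $\Sym_{\mathbb{Q}}$. These match precisely because $\iota$ and $\pi$ interact nontrivially only with the power sums $p_{\begin{psmallmatrix} a \\ b\end{psmallmatrix}}$ having $b=0$, for which $a+b-1=a-1$. The only care required is to invoke Theorem \ref{power sum basis} and the algebraic independence of the $p_{k}$, so that passing to generators is legitimate, which justifies the author's assessment that the statement is obvious.
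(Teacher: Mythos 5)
Your proof is correct. The paper gives no argument at all---it simply declares the proposition obvious---and your verification (all four composites $\iota\circ\omega$, $\omega\circ\iota$, $\pi\circ\omega$, $\omega\circ\pi$ are algebra homomorphisms by construction, so by Theorem \ref{power sum basis} and the algebraic independence of the $p_{k}$ it suffices to match the signs $(-1)^{k-1}$ and $(-1)^{a+b-1}$ on the generators $p_{k}$, $x_{0}$, and $p_{\begin{psmallmatrix} a \\ b \end{psmallmatrix}}$, which agree exactly because $\iota$ and $\pi$ interact nontrivially only when $b=0$) is precisely the routine check the authors intended to leave to the reader.
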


\subsection{Hyperplane arrangements}

A \textbf{(central) hyperplane arrangement} $ \mathcal{A} $ is a collection of finitely many vector subspaces of codimension $ 1 $ in the Euclidean space $ \mathbb{R}^{\ell} $. 
The \textbf{intersection lattice} $ L(\mathcal{A}) $ is defined by
\begin{align*}
L(\mathcal{A}) \coloneqq \Set{\bigcap_{H \in \mathcal{B}}H | \mathcal{B} \subseteq \mathcal{A}}
\end{align*}
with the reverse inclusion order: $ Y \leq Z \Leftrightarrow Y \supseteq Z $. 
Note that the ambient space $ \mathbb{R}^{\ell} $, considered as the empty intersection, is the minimal element of $ L(\mathcal{A}) $, denoted by $ \hat{0} $. 
It is well known that $ L(\mathcal{A}) $ is a geometric lattice. 

Let $ Y \in L(\mathcal{A}) $. 
The \textbf{localization} $ \mathcal{A}_{Y} $ and the \textbf{restriction} $ \mathcal{A}^{Y} $ is defined by 
\begin{align*}
\mathcal{A}_{Y} &\coloneqq \Set{H \in \mathcal{A} | H \supseteq Y}, \\
\mathcal{A}^{Y} &\coloneqq \Set{H \cap Y | H \in \mathcal{A}\setminus \mathcal{A}_{Y}}. 
\end{align*}
Note that the restriction $ \mathcal{A}^{Y} $ is an arrangement in the vector space $ Y $. 

We define the \textbf{characteristic polynomial} $ \chi_{\mathcal{A}}(t) $ by 
\begin{align*}
\chi_{\mathcal{A}}(t) \coloneqq \sum_{Y \in L(\mathcal{A})}\mu(\hat{0},Y)t^{\dim Y}, 
\end{align*}
where $ \mu $ denotes the M\"{o}bius function on the lattice $ L(\mathcal{A}) $, which is defined recursively by 
\begin{align*}
\mu(Y,Z) \coloneqq \begin{cases}
0 & \text{ if } Y \not\leq Z, \\
1 & \text{ if } Y = Z, \\
\displaystyle -\sum_{Y \leq W < Z} \mu(Y,W) & \text{ if } Y<Z.
\end{cases} 
\end{align*}

Recall that $ M(\mathcal{A}) $ denotes the complement of $ \mathcal{A} $, that is, $ M(\mathcal{A}) = \mathbb{R}^{\ell}\setminus \bigcup_{H \in \mathcal{A}} H $ and call a connected component of $ M(\mathcal{A}) $ a chamber. 

\begin{definition}
Define formal power series $ X_{\mathcal{A}} $ and $ \overline{X}_{\mathcal{A}} $ by 
\begin{align*}
X_{\mathcal{A}} \coloneqq \sum_{C}\sum_{\boldsymbol{\alpha} \in C \cap \mathbb{Z}^{\ell}} x_{\boldsymbol{\alpha}} \quad \text{ and } \quad 
\overline{X}_{\mathcal{A}} \coloneqq \sum_{C}\sum_{\boldsymbol{\alpha} \in \overline{C} \cap \mathbb{Z}^{\ell}} x_{\boldsymbol{\alpha}}, 
\end{align*}
where $ C $ ranges over all chambers of $ \mathcal{A} $, $ \boldsymbol{\alpha} = (\alpha_{1}, \dots, \alpha_{\ell}) $, and $ x_{\boldsymbol{\alpha}} \coloneqq x_{\alpha_{1}} \cdots x_{\alpha_{\ell}} $. 
\end{definition}

\begin{theorem}[See also Beck--Zaslavsky {\cite[Theorem 3.1 (3.1)]{beck2006inside-out-aim}}]\label{XA}
\begin{align*}
X_{\mathcal{A}} = \sum_{Y \in L(\mathcal{A})} \mu(\hat{0}, Y) \sum_{\boldsymbol{\alpha} \in Y \cap \mathbb{Z}^{\ell}} x_{\boldsymbol{\alpha}}. 
\end{align*}
\end{theorem}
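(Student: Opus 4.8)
The plan is to recognize $X_{\mathcal{A}}$ as the generating function of \emph{all} integer points lying off the arrangement, and then to convert ``open flat'' data into ``closed flat'' data by a single application of Möbius inversion on the intersection lattice $L(\mathcal{A})$. First I would note that the chambers are exactly the connected components of $M(\mathcal{A})$ and hence partition it, so an integer point of $M(\mathcal{A})$ lies in precisely one chamber and
\begin{align*}
X_{\mathcal{A}} = \sum_{\boldsymbol{\alpha} \in M(\mathcal{A}) \cap \mathbb{Z}^{\ell}} x_{\boldsymbol{\alpha}}.
\end{align*}
At this stage one should check that this is a well-defined element of $ \mathbb{Z}\llbracket \boldsymbol{x} \rrbracket $: every $ x_{\boldsymbol{\alpha}} $ has degree exactly $ \ell $, and only finitely many $ \boldsymbol{\alpha} \in \mathbb{Z}^{\ell} $ produce a prescribed monomial (a fixed exponent vector arises from finitely many orderings of the coordinates), so each monomial has a finite coefficient.

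Next I would attach to each point $ \boldsymbol{\alpha} \in \mathbb{R}^{\ell} $ the flat $ Y(\boldsymbol{\alpha}) \coloneqq \bigcap_{H \in \mathcal{A},\, \boldsymbol{\alpha} \in H} H \in L(\mathcal{A}) $, the smallest flat containing $ \boldsymbol{\alpha} $, with the convention $ Y(\boldsymbol{\alpha}) = \hat{0} $ (the empty intersection $ \mathbb{R}^{\ell} $) exactly when $ \boldsymbol{\alpha} \in M(\mathcal{A}) $. For each $ Y \in L(\mathcal{A}) $ set $ Y^{\circ} \coloneqq \Set{\boldsymbol{\alpha} \in Y | Y(\boldsymbol{\alpha}) = Y} $. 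The key combinatorial observation is that if $ \boldsymbol{\alpha} \in Y $ then every hyperplane defining $ Y $ passes through $ \boldsymbol{\alpha} $, so $ Y(\boldsymbol{\alpha}) \subseteq Y $, i.e.\ $ Y(\boldsymbol{\alpha}) \geq Y $ in the reverse-inclusion order of $ L(\mathcal{A}) $. Since $ \boldsymbol{\alpha} \mapsto Y(\boldsymbol{\alpha}) $ is a well-defined function, this yields the disjoint decomposition $ Y = \bigsqcup_{Z \geq Y} Z^{\circ} $.

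Writing $ g(Y) \coloneqq \sum_{\boldsymbol{\alpha} \in Y \cap \mathbb{Z}^{\ell}} x_{\boldsymbol{\alpha}} $ and $ f(Y) \coloneqq \sum_{\boldsymbol{\alpha} \in Y^{\circ} \cap \mathbb{Z}^{\ell}} x_{\boldsymbol{\alpha}} $, the decomposition above gives $ g(Y) = \sum_{Z \geq Y} f(Z) $ in $ \mathbb{Z}\llbracket \boldsymbol{x} \rrbracket $. Because $ L(\mathcal{A}) $ is a finite lattice, Möbius inversion applies and produces $ f(Y) = \sum_{Z \geq Y} \mu(Y,Z) g(Z) $; specializing to $ Y = \hat{0} $ and using $ \hat{0}^{\circ} = M(\mathcal{A}) $, hence $ f(\hat{0}) = X_{\mathcal{A}} $, delivers the asserted identity. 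The steps requiring the most care are not algebraically deep but bookkeeping in nature: confirming that all the rearrangements are legitimate in the formal-power-series ring (which follows from the finiteness of $ L(\mathcal{A}) $ together with the well-definedness noted above), and, above all, handling the reverse-inclusion convention correctly, since ``larger'' flats $ Z \geq Y $ are \emph{smaller} subspaces $ Z \subseteq Y $ — getting this orientation right is precisely what makes the Möbius inversion come out with $ \mu(\hat{0}, Y) $ rather than its dual.
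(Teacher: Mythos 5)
Your proof is correct and takes essentially the same route as the paper: your $f(Y)=\sum_{\boldsymbol{\alpha}\in Y^{\circ}\cap\mathbb{Z}^{\ell}}x_{\boldsymbol{\alpha}}$ is precisely the paper's $X_{\mathcal{A}^{Y}}$ (the chambers of the restriction $\mathcal{A}^{Y}$ partition $Y^{\circ}$), so your decomposition $g(Y)=\sum_{Z\geq Y}f(Z)$ followed by M\"{o}bius inversion and specialization at $\hat{0}$ reproduces the paper's two displayed identities. The additional bookkeeping you supply --- well-definedness in $\mathbb{Z}\llbracket\boldsymbol{x}\rrbracket$ and the orientation of the reverse-inclusion order --- is sound and only makes explicit what the paper leaves implicit.
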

\begin{proof}
For every $ W \in L(\mathcal{A}) $, we have
\begin{align*}
\sum_{\boldsymbol{\alpha} \in W \cap \mathbb{Z}^{\ell}}x_{\boldsymbol{\alpha}} = \sum_{Y \geq W}X_{\mathcal{A}^{Y}}. 
\end{align*}
By the M\"{o}bius inversion formula, 
\begin{align*}
X_{\mathcal{A}^{W}} = \sum_{Y \geq W}\mu(W,Y)\sum_{\boldsymbol{\alpha} \in Y \cap \mathbb{Z}^{\ell}}x_{\boldsymbol{\alpha}}. 
\end{align*}
Putting $ W = \hat{0} $ yields the desired result. 
\end{proof}

\begin{lemma}[Beck--Zaslavsky {\cite[Lemma 3.4]{beck2006inside-out-aim}}]\label{BZ multiplicity}
For $ \boldsymbol{\alpha} \in \mathbb{R}^{\ell} $, let $ m_{\mathcal{A}}(\boldsymbol{\alpha}) $ denote the number of chambers $ C $ of $ \mathcal{A} $ such that $ \boldsymbol{\alpha} \in \overline{C} $. 
Then $ m_{\mathcal{A}}(\boldsymbol{\alpha}) $ is equal to the number of chambers of the localization $ \mathcal{A}_{Z_{\boldsymbol{\alpha}}} $, where $ Z_{\boldsymbol{\alpha}} \in L(\mathcal{A}) $ denotes the minimal element containing $ \boldsymbol{\alpha} $. 
Furthermore, $ m_{\mathcal{A}}(\boldsymbol{\alpha}) = |\chi_{\mathcal{A}_{Z_{\boldsymbol{\alpha}}}}(-1)| $. 
\end{lemma}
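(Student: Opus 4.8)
The plan is to split the lemma into two independent parts: the geometric identity that $ m_{\mathcal{A}}(\boldsymbol{\alpha}) $ equals the number of chambers of the localization $ \mathcal{A}_{Z_{\boldsymbol{\alpha}}} $, and the enumerative identity that this number equals $ |\chi_{\mathcal{A}_{Z_{\boldsymbol{\alpha}}}}(-1)| $. The second part I would dispose of immediately by invoking Zaslavsky's classical theorem counting the chambers of a real arrangement: for any central arrangement $ \mathcal{B} $ in $ \mathbb{R}^{\ell} $ the number of chambers equals $ (-1)^{\ell}\chi_{\mathcal{B}}(-1) $, and since the leading term of $ \chi_{\mathcal{B}}(t) $ is $ t^{\ell} $ with alternating signs this value is positive, hence equal to $ |\chi_{\mathcal{B}}(-1)| $. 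Applying this to $ \mathcal{B} = \mathcal{A}_{Z_{\boldsymbol{\alpha}}} $ reduces everything to the first part, which is the real content.

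Before the main argument I would record the elementary observation that $ \mathcal{A}_{Z_{\boldsymbol{\alpha}}} $ is precisely the set of hyperplanes of $ \mathcal{A} $ passing through $ \boldsymbol{\alpha} $. Indeed, $ Z_{\boldsymbol{\alpha}} = \bigcap \Set{H \in \mathcal{A} | \boldsymbol{\alpha} \in H} $, so each such $ H $ contains $ Z_{\boldsymbol{\alpha}} $; conversely, since $ \boldsymbol{\alpha} \in Z_{\boldsymbol{\alpha}} $, any $ H \supseteq Z_{\boldsymbol{\alpha}} $ passes through $ \boldsymbol{\alpha} $. (If no hyperplane contains $ \boldsymbol{\alpha} $ then $ Z_{\boldsymbol{\alpha}} = \hat{0} $ and $ \mathcal{A}_{Z_{\boldsymbol{\alpha}}} = \varnothing $, in which case both sides of the lemma equal $ 1 $, so this degenerate case is consistent.)

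The heart of the proof is a sign-vector bijection. Write $ \mathcal{A} = \{H_{1}, \dots, H_{k}\} $ with $ H_{i} = \{f_{i}=0\} $ for linear functionals $ f_{i} $, so that every chamber has the form $ C_{\sigma} = \Set{x | \sigma_{i}f_{i}(x) > 0 \text{ for all } i} $ for some $ \sigma \in \{+,-\}^{k} $, and for nonempty $ C_{\sigma} $ one has $ \overline{C_{\sigma}} = \Set{x | \sigma_{i}f_{i}(x) \geq 0 \text{ for all } i} $. Thus $ \boldsymbol{\alpha} \in \overline{C_{\sigma}} $ holds iff $ \sigma_{i}f_{i}(\boldsymbol{\alpha}) \geq 0 $ for every $ i $. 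For $ H_{i} \notin \mathcal{A}_{Z_{\boldsymbol{\alpha}}} $ we have $ f_{i}(\boldsymbol{\alpha}) \neq 0 $, which forces $ \sigma_{i} = \operatorname{sgn} f_{i}(\boldsymbol{\alpha}) $, whereas the coordinates $ \sigma_{i} $ with $ H_{i} \in \mathcal{A}_{Z_{\boldsymbol{\alpha}}} $ are unconstrained by $ \boldsymbol{\alpha} $. I would therefore define the map sending a chamber $ C_{\sigma} $ with $ \boldsymbol{\alpha} \in \overline{C_{\sigma}} $ to the region of $ \mathcal{A}_{Z_{\boldsymbol{\alpha}}} $ obtained by keeping only the coordinates $ \sigma_{i} $ indexed by $ H_{i} \in \mathcal{A}_{Z_{\boldsymbol{\alpha}}} $, and show it is a bijection onto the chambers of $ \mathcal{A}_{Z_{\boldsymbol{\alpha}}} $. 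The main obstacle, and the only nontrivial point, is matching the nonemptiness conditions: a truncated sign vector could a priori describe a chamber of the localization that does not extend to a genuine chamber of $ \mathcal{A} $, or vice versa. I would resolve this with a small-ball argument. Setting $ \delta = \min\Set{|f_{i}(\boldsymbol{\alpha})| | H_{i} \notin \mathcal{A}_{Z_{\boldsymbol{\alpha}}}} > 0 $ and choosing $ \epsilon $ small enough that $ |f_{i}(x)-f_{i}(\boldsymbol{\alpha})| < \delta $ for all far $ i $ and all $ x $ in the ball $ B = B(\boldsymbol{\alpha},\epsilon) $, the signs of the far functionals are locked on $ B $, so that $ M(\mathcal{A}) \cap B = M(\mathcal{A}_{Z_{\boldsymbol{\alpha}}}) \cap B $. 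Each chamber $ D $ of the central arrangement $ \mathcal{A}_{Z_{\boldsymbol{\alpha}}} $ is a cone with apex locus $ Z_{\boldsymbol{\alpha}} \ni \boldsymbol{\alpha} $, hence meets $ B $ in a nonempty connected set contained in a unique chamber $ C $ of $ \mathcal{A} $ with $ \boldsymbol{\alpha} \in \overline{C} $; tracing this correspondence back and forth gives both surjectivity and injectivity of the map above.

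Combining the two parts yields $ m_{\mathcal{A}}(\boldsymbol{\alpha}) = \#\{\text{chambers of } \mathcal{A}_{Z_{\boldsymbol{\alpha}}}\} = (-1)^{\ell}\chi_{\mathcal{A}_{Z_{\boldsymbol{\alpha}}}}(-1) = |\chi_{\mathcal{A}_{Z_{\boldsymbol{\alpha}}}}(-1)| $, as claimed. I expect the sign-vector setup together with the locking of far signs on $ B $ to be entirely routine once phrased correctly; the one place demanding care is verifying that each localization chamber genuinely survives as a chamber of $ \mathcal{A} $ near $ \boldsymbol{\alpha} $, which is exactly where the choice of $ \epsilon $ relative to $ \delta $ is used.
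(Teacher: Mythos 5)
Your proof is correct. Note that the paper itself gives no argument for this statement: it is imported wholesale from Beck--Zaslavsky \cite[Lemma 3.4]{beck2006inside-out-aim}, so there is no in-paper proof to diverge from, and your argument is essentially the standard one underlying the cited result --- identify $ \mathcal{A}_{Z_{\boldsymbol{\alpha}}} $ with the hyperplanes through $ \boldsymbol{\alpha} $, match chambers via sign vectors with the far coordinates locked to $ \operatorname{sgn} f_{i}(\boldsymbol{\alpha}) $, and finish with Zaslavsky's count $ \#\{\text{chambers}\} = (-1)^{\ell}\chi(-1) $. Two small remarks: well-definedness of your map needs no ball at all, since a chamber $ C $ with $ \boldsymbol{\alpha} \in \overline{C} $ is itself a nonempty subset of the localization region with the restricted sign vector, so the $ \epsilon $--$ \delta $ argument is genuinely needed only for surjectivity (every chamber $ D $ of $ \mathcal{A}_{Z_{\boldsymbol{\alpha}}} $ survives near $ \boldsymbol{\alpha} $, via the segment from $ \boldsymbol{\alpha} $ into $ D $, which works because the near functionals vanish at $ \boldsymbol{\alpha} $); and the positivity step $ (-1)^{\ell}\chi_{\mathcal{A}_{Z_{\boldsymbol{\alpha}}}}(-1) = |\chi_{\mathcal{A}_{Z_{\boldsymbol{\alpha}}}}(-1)| $, which you justify by alternation of coefficients, could equally be drawn from the paper's own Lemma \ref{Rota sign theorem}, which gives $ |\chi(-1)| = \sum_{Y}|\mu(\hat{0},Y)| $ directly.
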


\begin{lemma}[Rota {\cite[Theorem 4]{rota1964foundations-zfwuvg}}]\label{Rota sign theorem}
If $ Y \in L(\mathcal{A}) $ covers $ Z \in L(\mathcal{A}) $, then $ \mu(\hat{0},Y) $ and $ \mu(\hat{0}, Z) $ have opposite signs. 
In particular, $ |\chi_{\mathcal{A}}(-1)| = \sum_{Y \in L(\mathcal{A})} \left| \mu(\hat{0}, Y) \right| $. 
\end{lemma}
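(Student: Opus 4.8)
The plan is to reduce both assertions of the lemma to the classical \emph{sign-alternation} property of the M\"obius function of a geometric lattice, namely that for every $ Y \in L(\mathcal{A}) $ one has $ \mu(\hat{0},Y) \neq 0 $ and
\[
\operatorname{sgn}\mu(\hat{0},Y) = (-1)^{\operatorname{rank}(Y)}, \qquad \operatorname{rank}(Y) = \codim Y = \ell - \dim Y,
\]
where $ \operatorname{rank}(Y) $ is the rank of $ Y $ in the geometric lattice $ L(\mathcal{A}) $. Granting this, the first assertion is immediate: if $ Y $ covers $ Z $ then $ \operatorname{rank}(Y) = \operatorname{rank}(Z)+1 $, so the signs $ (-1)^{\operatorname{rank}(Y)} $ and $ (-1)^{\operatorname{rank}(Z)} $ are opposite.

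For the second assertion I would substitute $ t=-1 $ into the definition of the characteristic polynomial and use $ \dim Y = \ell - \operatorname{rank}(Y) $:
\[
\chi_{\mathcal{A}}(-1) = \sum_{Y \in L(\mathcal{A})} \mu(\hat{0},Y)(-1)^{\dim Y} = (-1)^{\ell}\sum_{Y \in L(\mathcal{A})}(-1)^{\operatorname{rank}(Y)}\mu(\hat{0},Y).
\]
By the sign property each summand on the right equals $ \left|\mu(\hat{0},Y)\right| $, whence $ \chi_{\mathcal{A}}(-1) = (-1)^{\ell}\sum_{Y}\left|\mu(\hat{0},Y)\right| $, and taking absolute values gives the claim.

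It remains to establish the sign property, which is the real content (Rota's theorem). Here I would use that $ L(\mathcal{A}) $ is a geometric lattice, hence atomistic and semimodular, and that every lower interval $ [\hat{0},Y] $ is again geometric with the same M\"obius values; this reduces the statement to showing, for a geometric lattice with top $ \hat{1} $ and rank $ n \geq 1 $, that $ (-1)^{n}\mu(\hat{0},\hat{1}) > 0 $. I would argue by induction on $ n $. Fixing an atom $ p $, Weisner's theorem gives $ \sum_{x \vee p = \hat{1}}\mu(\hat{0},x) = 0 $, so that
\[
\mu(\hat{0},\hat{1}) = -\sum_{\substack{x \neq \hat{1}\\ x \vee p = \hat{1}}}\mu(\hat{0},x).
\]
For each such $ x $ one has $ p \not\leq x $, so semimodularity forces $ \hat{1} = x \vee p $ to cover $ x $; thus $ x $ is a coatom of rank $ n-1 $, and the inductive hypothesis applied to $ [\hat{0},x] $ gives $ \operatorname{sgn}\mu(\hat{0},x) = (-1)^{n-1} $. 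Consequently every term $ -\mu(\hat{0},x) $ carries the sign $ (-1)^{n} $, and the induction closes.

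The delicate point, and the step I expect to require the most care, is verifying that the sum above is \emph{nonempty}, i.e.\ that some coatom $ x $ satisfies $ p \not\leq x $; otherwise the sign of $ \mu(\hat{0},\hat{1}) $ could not be pinned down. I would produce such an $ x $ by choosing a maximal element among those not lying above $ p $ and then showing, using atomisticity together with semimodularity, that it must be a coatom with $ x \vee p = \hat{1} $. Once this existence is secured, the induction is complete and both assertions of the lemma follow from the sign property as explained above.
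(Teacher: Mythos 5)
Your proposal is correct, but note that the paper does not prove this lemma at all: it is stated as a quoted classical result of Rota, with the proof deferred to the citation. What you have written is essentially a reconstruction of Rota's original argument, so in that sense it supplies exactly what the paper omits. The reduction of both assertions to the sign-alternation property $\operatorname{sgn}\mu(\hat{0},Y)=(-1)^{\operatorname{rank}(Y)}$ is right: covers in a geometric lattice raise rank by exactly one, and your substitution $t=-1$ into $\chi_{\mathcal{A}}(t)=\sum_{Y}\mu(\hat{0},Y)t^{\dim Y}$ together with $\dim Y=\ell-\operatorname{rank}(Y)$ gives $\chi_{\mathcal{A}}(-1)=(-1)^{\ell}\sum_{Y}\left|\mu(\hat{0},Y)\right|$ as claimed (the paper's remark that $L(\mathcal{A})$ is geometric, graded by codimension, licenses all of this). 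The Weisner induction is the standard route, and you correctly flag the one delicate point, nonemptiness of the sum over $x\neq\hat{1}$ with $x\vee p=\hat{1}$; your sketch does close it: take $x$ maximal with $p\not\leq x$, and if $x\vee p<\hat{1}$ pick by atomisticity an atom $q\not\leq x\vee p$; then $x\vee q$ covers $x$ by semimodularity, lies above $p$ by maximality of $x$, hence equals $x\vee p$, contradicting $q\not\leq x\vee p$; so $x\vee p=\hat{1}$ and semimodularity makes $x$ a coatom. Two small caveats to keep in mind: you use Weisner's theorem and the fact that lower intervals of a geometric lattice are geometric as black boxes (both classical, and reasonable to cite given that the lemma itself is cited in the paper), and your induction should state its base case $\mu(\hat{0},\hat{0})=1$ explicitly, which also yields the nonvanishing $\mu(\hat{0},Y)\neq 0$ that the sign claim implicitly asserts.
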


\begin{theorem}[See also Beck--Zaslavsky {\cite[Theorem 3.1 (3.2)]{beck2006inside-out-aim}}]\label{oXA}
\begin{align*}
\overline{X}_{\mathcal{A}} = \sum_{Y \in L(\mathcal{A})} \left|\mu(\hat{0}, Y)\right| \sum_{\boldsymbol{\alpha} \in Y \cap \mathbb{Z}^{\ell}} x_{\boldsymbol{\alpha}}. 
\end{align*}
\end{theorem}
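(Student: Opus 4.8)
The plan is to mirror the proof of Theorem \ref{XA}, but to start from the pointwise multiplicity count provided by Lemma \ref{BZ multiplicity} rather than from a direct M\"{o}bius inversion. First I would rewrite the defining double sum by exchanging the order of summation over chambers $ C $ and integer points $ \boldsymbol{\alpha} $. Since every monomial $ x_{\boldsymbol{\alpha}} $ has degree $ \ell $, the series is degree-bounded and the interchange is legitimate; it yields
\begin{align*}
\overline{X}_{\mathcal{A}} = \sum_{\boldsymbol{\alpha} \in \mathbb{Z}^{\ell}} m_{\mathcal{A}}(\boldsymbol{\alpha})\, x_{\boldsymbol{\alpha}},
\end{align*}
where $ m_{\mathcal{A}}(\boldsymbol{\alpha}) $ is precisely the number of chambers whose closure contains $ \boldsymbol{\alpha} $, the quantity appearing in Lemma \ref{BZ multiplicity}.

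Next I would feed in the two cited lemmas. By Lemma \ref{BZ multiplicity} we have $ m_{\mathcal{A}}(\boldsymbol{\alpha}) = |\chi_{\mathcal{A}_{Z_{\boldsymbol{\alpha}}}}(-1)| $, where $ Z_{\boldsymbol{\alpha}} $ is the minimal flat containing $ \boldsymbol{\alpha} $. To evaluate this I would apply Lemma \ref{Rota sign theorem} to the localized arrangement $ \mathcal{A}_{Z_{\boldsymbol{\alpha}}} $. This step uses the standard identification of its intersection lattice with the interval $ L(\mathcal{A}_{Z_{\boldsymbol{\alpha}}}) \cong [\hat{0}, Z_{\boldsymbol{\alpha}}] $ in $ L(\mathcal{A}) $, together with the fact that the M\"{o}bius function depends only on the interval, so that $ \mu_{\mathcal{A}_{Z_{\boldsymbol{\alpha}}}}(\hat{0}, Y) = \mu(\hat{0}, Y) $ for all $ Y \leq Z_{\boldsymbol{\alpha}} $. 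Combining these gives
\begin{align*}
m_{\mathcal{A}}(\boldsymbol{\alpha}) = \sum_{Y \leq Z_{\boldsymbol{\alpha}}} |\mu(\hat{0}, Y)|.
\end{align*}

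Finally I would interchange the order of summation once more. The crucial translation is that, under the reverse-inclusion order on $ L(\mathcal{A}) $, the relation $ Y \leq Z_{\boldsymbol{\alpha}} $ is equivalent to $ Z_{\boldsymbol{\alpha}} \subseteq Y $, and since $ Z_{\boldsymbol{\alpha}} $ is the smallest flat containing $ \boldsymbol{\alpha} $ this is in turn equivalent to $ \boldsymbol{\alpha} \in Y $. Substituting and swapping the sums over $ \boldsymbol{\alpha} $ and $ Y $ produces
\begin{align*}
\overline{X}_{\mathcal{A}} = \sum_{Y \in L(\mathcal{A})} |\mu(\hat{0}, Y)| \sum_{\boldsymbol{\alpha} \in Y \cap \mathbb{Z}^{\ell}} x_{\boldsymbol{\alpha}},
\end{align*}
as desired.

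I expect the main obstacle to be the careful bookkeeping forced by the reverse-inclusion convention: one must correctly argue that ``minimal flat containing $ \boldsymbol{\alpha} $'' corresponds to the lattice condition $ Y \leq Z_{\boldsymbol{\alpha}} $, that this matches the incidence $ \boldsymbol{\alpha} \in Y $, and that the M\"{o}bius and characteristic-polynomial data of the localization $ \mathcal{A}_{Z_{\boldsymbol{\alpha}}} $ agree with the corresponding interval data in $ L(\mathcal{A}) $. Once these identifications are secured, the remainder consists of the two summation interchanges, entirely parallel to the argument for Theorem \ref{XA}.
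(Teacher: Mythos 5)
Your proposal is correct and takes essentially the same route as the paper's proof: rewrite $ \overline{X}_{\mathcal{A}} = \sum_{\boldsymbol{\alpha} \in \mathbb{Z}^{\ell}} m_{\mathcal{A}}(\boldsymbol{\alpha}) x_{\boldsymbol{\alpha}} $, evaluate $ m_{\mathcal{A}}(\boldsymbol{\alpha}) = \sum_{Y \leq Z_{\boldsymbol{\alpha}}} |\mu(\hat{0},Y)| $ via Lemmas \ref{BZ multiplicity} and \ref{Rota sign theorem}, and swap the sums using the identity $ \{\boldsymbol{\alpha} \in \mathbb{Z}^{\ell} : Z_{\boldsymbol{\alpha}} \subseteq Y\} = Y \cap \mathbb{Z}^{\ell} $. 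The only difference is cosmetic: you spell out the interval identification $ L(\mathcal{A}_{Z_{\boldsymbol{\alpha}}}) \cong [\hat{0}, Z_{\boldsymbol{\alpha}}] $ and the reverse-inclusion bookkeeping that the paper leaves implicit, which is a welcome clarification rather than a deviation.
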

\begin{proof}
By Lemma \ref{BZ multiplicity} and \ref{Rota sign theorem}, 
\begin{align*}
\overline{X}_{\mathcal{A}} 
= \sum_{\boldsymbol{\alpha} \in \mathbb{Z}^{\ell}} m_{\mathcal{A}}(\boldsymbol{\alpha})x_{\boldsymbol{\alpha}}
= \sum_{\boldsymbol{\alpha} \in \mathbb{Z}^{\ell}} \sum_{Y \leq Z_{\boldsymbol{\alpha}}} \left| \mu(\hat{0}, Y) \right|x_{\boldsymbol{\alpha}}
= \sum_{Y \in L(\mathcal{A})} \left| \mu(\hat{0}, Y) \right|\sum_{\substack{\boldsymbol{\alpha} \in \mathbb{Z}^{\ell} \\ Z_{\boldsymbol{\alpha}}\geq Y}}x_{\boldsymbol{\alpha}}. 
\end{align*}
One can prove that 
\begin{align*}
\Set{\boldsymbol{\alpha} \in \mathbb{Z}^{\ell} | Z_{\boldsymbol{\alpha}} \subseteq Y} = Y \cap \mathbb{Z}^{\ell}. 
\end{align*}
Thus the assertion holds. 
\end{proof}

\subsection{Signed graphs}
A \textbf{path} on distinct vertices $ v_{1}, \dots, v_{k} $ of a signed graph $ \Gamma $ is a subset of $ E^{+}_{\Gamma} \sqcup E^{-}_{\Gamma} $ consisting of edges $ \{v_{1}, v_{2}\}, \{v_{2}, v_{3}\}, \dots, \{v_{k-1}, v_{k}\} $. 
A signed graph is called \textbf{connected} if every pair of vertices can be joined by a path. 
Every signed graph $ \Gamma $ can be decomposed into the connected components in the usual way. 

For $ k \geq 3 $, a \textbf{cycle of length $ k $} is  a path above with an edge $ \{v_{k}, v_{1}\} $. 
A pair of the positive and negative edges between two vertices is called a \textbf{cycle of length $ 2 $}. 
A loop is also considered as a \textbf{cycle of length $ 1 $}. 

A cycle of length $ k \geq 2 $ is called \textbf{balanced} if the number of negative edges of it is even. 
Otherwise, call the cycle \textbf{unbalanced}. 
Thus every cycle of length $ 2 $ is unbalanced. 
Moreover, a loop is defined to be \textbf{unbalanced}. 
A subset of $ E_{\Gamma} $ is \textbf{balanced} if every cycle in it is balanced.  
Otherwise, call it \textbf{unbalanced}. 

A \textbf{tight handcuff} is the union of two unbalanced cycles sharing exactly one vertex. 
A \textbf{loose handcuff} is the union of two unbalanced cycles and a path such that the cycles share no vertices, an endvertex of the path belongs to one of the cycles, the other endvertex belongs to the other cycle, and the internal vertices of the path do not belong to the cycles. 

A subset of $ E_{\Gamma} $ is called a \textbf{circuit} if it is a balanced cycle, a tight handcuff, or a loose handcuff. 
The set of circuits defines a matroid on $ E_{\Gamma} $, which is called the \textbf{frame matroid} (or the \textbf{bias matroid}) of $ \Gamma $. 
Then a subset $ S \subseteq E_{\Gamma} $ is a \textbf{flat} if and only if 
\begin{align*}
\Set{ e \in E_{\Gamma}\setminus S | \text{ $ e $ and some elements in $ S $ form a circuit}} = \varnothing. 
\end{align*}
Let $ L(\Gamma) $ denote the set of flats of $ \Gamma $. 
It is a geometric lattice with the inclusion order. 

A subset of $ E_{\Gamma} $ yields an element of $ L(\mathcal{A}_{\Gamma}) $ by taking the intersection of the hyperplanes corresponding to the edges. 
This assignment leads to the following isomorphism. 
\begin{proposition}[Zaslavsky {\cite[Theorem 2.1(a)]{zaslavsky2003biased-joctsb}}]\label{Zaslavsky lattice isomorphism}
Let $ \Gamma $ be a signed graph. 
Then $ L(\Gamma) \simeq L(\mathcal{A}_{\Gamma}) $. 
\end{proposition}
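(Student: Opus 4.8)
The plan is to realize both lattices as the lattice of flats of one and the same matroid on the ground set $E_{\Gamma}$. On the arrangement side I assign to each edge $e$ its normal vector and write $H_{e}$ for the corresponding hyperplane: $\boldsymbol{v}_{\{i,j\}} = \boldsymbol{e}_{i} - \boldsymbol{e}_{j}$ for a positive edge, $\boldsymbol{v}_{\{i,j\}} = \boldsymbol{e}_{i} + \boldsymbol{e}_{j}$ for a negative edge, and $\boldsymbol{v}_{i} = \boldsymbol{e}_{i}$ for a loop, where $\boldsymbol{e}_{1}, \dots, \boldsymbol{e}_{\ell}$ is the standard basis of $\mathbb{R}^{\ell}$. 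Because $E_{\Gamma}^{+}$ and $E_{\Gamma}^{-}$ are sets of $2$-element subsets and $L_{\Gamma} \subseteq V_{\Gamma}$, these normals are nonzero and pairwise non-proportional, so $e \mapsto H_{e}$ identifies $E_{\Gamma}$ with the hyperplanes of $\mathcal{A}_{\Gamma}$ and $L(\mathcal{A}_{\Gamma})$ is exactly the lattice of flats of the simple linear matroid $M_{\mathrm{lin}}$ of $\{\boldsymbol{v}_{e}\}_{e \in E_{\Gamma}}$, under $S \mapsto \bigcap_{e \in S} H_{e}$. Since $L(\Gamma)$ is by definition the lattice of flats of the frame matroid $M_{\mathrm{frame}}$ on the same ground set, it suffices to prove $M_{\mathrm{lin}} = M_{\mathrm{frame}}$; the isomorphism asserted by the proposition is then precisely $S \mapsto \bigcap_{e \in S} H_{e}$.

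As circuits determine a matroid, I would establish $M_{\mathrm{lin}} = M_{\mathrm{frame}}$ by matching their circuit sets. For the forward inclusion, that every frame circuit is a minimal linear dependence, I first treat a cycle $v_{1}, e_{1}, v_{2}, \dots, e_{k}, v_{1}$: writing $\boldsymbol{v}_{e_{a}} = \boldsymbol{e}_{v_{a}} - \sigma_{a}\boldsymbol{e}_{v_{a+1}}$ with $\sigma_{a} = +1$ for a positive and $\sigma_{a} = -1$ for a negative edge, the vertex equations of $\sum_{a} \lambda_{a}\boldsymbol{v}_{e_{a}} = 0$ read $\lambda_{b} = \sigma_{b-1}\lambda_{b-1}$, which admits a nonzero solution exactly when $\prod_{a}\sigma_{a} = 1$, i.e. when the cycle is balanced; deleting any edge leaves a tree, whose normals are independent, so a balanced cycle is a minimal dependence. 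For a tight or loose handcuff $D$ a rank count (a connected subgraph on a vertex set $W$ contributes rank $|W|$ if unbalanced and $|W|-1$ if balanced) gives $|D| = |V(D)| + 1$, so $D$ is dependent, while deleting any edge yields a graph each of whose components has at most one cycle, still unbalanced, hence independent normals; thus handcuffs are minimal dependences as well.

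The hard part will be the converse: that every minimal linear dependence $D$ is a frame circuit. I would argue structurally from a relation $\sum_{e \in D}\lambda_{e}\boldsymbol{v}_{e} = 0$ with all $\lambda_{e} \neq 0$. The coordinate-$i$ equation forces at least two incident contributions at every vertex of $(V(D), D)$ (a pendant edge or an isolated loop would make its coefficient vanish), giving minimum degree at least $2$; and since the coordinate equations decouple over vertex-disjoint pieces, minimality forces $(V(D), D)$ to be connected. The rank count then fixes the cyclomatic number: if $D$ is balanced it has $|V(D)|$ edges and is therefore a single balanced cycle, while if $D$ is unbalanced it has $|V(D)| + 1$ edges and is thus either a theta graph or a handcuff. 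The main obstacle, and the one place a genuinely signed argument enters, is excluding theta graphs: the three cycles of a theta graph have signs whose product is a perfect square and hence $+1$, so an even number of them are unbalanced and at least one is balanced, and that balanced cycle would be a strictly smaller dependence, contradicting minimality. Hence $D$ is a handcuff, and if either of its cycles were balanced that cycle would again be a smaller dependence, so both are unbalanced and $D$ is a tight or loose handcuff. This matches the two circuit sets and yields $M_{\mathrm{lin}} = M_{\mathrm{frame}}$; the only routine care needed is the loop-as-length-one-cycle convention in the degree and cyclomatic bookkeeping.
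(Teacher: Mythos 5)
Your proposal is correct, but be aware that the paper itself offers no proof of this proposition: it is quoted from Zaslavsky \cite{zaslavsky2003biased-joctsb}, and your argument is essentially the standard one from that line of work, namely identifying $ L(\mathcal{A}_{\Gamma}) $ with the lattice of flats of the linear matroid of the signed incidence vectors $ \boldsymbol{e}_{i} - \boldsymbol{e}_{j} $, $ \boldsymbol{e}_{i} + \boldsymbol{e}_{j} $, $ \boldsymbol{e}_{i} $ and matching circuit sets with the frame matroid. The individual steps check out: the telescoping relation $ \lambda_{b} = \sigma_{b-1}\lambda_{b-1} $ correctly characterizes balanced cycles as minimal dependences; the coordinate-equation argument does give incidence at least two at every vertex and connectivity of a minimal dependence; the count $ |D| = r(D) + 1 $ together with your rank formula pins down the cyclomatic number; and the sign-product observation that the three cycle signs of a theta multiply to a perfect square is exactly the genuinely signed point that excludes thetas --- note it also handles the degenerate multigraph thetas (parallel positive and negative edges plus a path), which do occur here since $ E_{\Gamma}^{+} \cap E_{\Gamma}^{-} $ may be nonempty. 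Two points should be discharged explicitly in a full write-up. First, the parenthetical rank formula (a connected piece on $ W $ has rank $ |W|-1 $ if balanced and $ |W| $ if unbalanced) carries real weight in both directions --- it gives independence after deleting an edge of a handcuff and the edge counts $ |D| = |V(D)| $ resp.\ $ |V(D)|+1 $ --- and while it is not circular, it needs a proof; the clean route is switching via Harary's theorem (Theorem \ref{Harary} of the paper): a diagonal $ \pm 1 $ change of coordinates makes a balanced component graphic, of rank $ |W|-1 $, while an unbalanced component has a spanning tree switchable to all-positive plus a negative non-tree edge whose vector has nonzero coordinate sum, forcing rank $ |W| $. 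Second, the loop bookkeeping you flagged: a loop contributes one term to its coordinate equation (so a vertex carrying only a loop kills that coefficient) but counts two in the degree sum used for the cyclomatic classification; since the paper takes $ L_{\Gamma} \subseteq V_{\Gamma} $, there is at most one loop per vertex and no parallel-loop degeneracy arises (two unbalanced length-$1$ and length-$2$ cycles meeting at a vertex are correctly absorbed as tight handcuffs). With those two points filled in, your proof is complete and self-contained, which is more than the paper provides.
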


A \textbf{coloring} $ \kappa $ of a signed graph $ \Gamma $ is a map $ \kappa \colon V_{\Gamma} \to \mathbb{Z} $. 
A coloring $ \kappa $ is called \textbf{zero-free} if $ \kappa(v) \neq 0 $ for any $ v \in V_{\Gamma} $. 
A coloring $ \kappa $ is called \textbf{proper} if the following conditions hold. 
\begin{enumerate}[(1)]
\item $ \{v,w\} \in E_{\Gamma}^{+} \Rightarrow \kappa(v) \neq \kappa(w) $. 
\item $ \{v,w\} \in E_{\Gamma}^{-} \Rightarrow \kappa(v) \neq -\kappa(w) $. 
\item $ v \in L_{\Gamma} \Rightarrow \kappa(v) \neq 0 $. 
\end{enumerate}

Let $ [\pm n] \coloneqq \{0, \pm 1, \dots, \pm n\} $. 
An \textbf{$ n $-coloring} of $ \Gamma $ is a map $ \kappa \colon V_{\Gamma} \to [\pm n] $. 
Zaslavsky \cite{zaslavsky1982signed-dm} introduced two kinds of chromatic polynomials for a signed graph. 
\begin{theorem}[Zaslavsky {\cite[Theorem 2.2]{zaslavsky1982signed-dm}}]
Given a signed graph $ \Gamma $, there exist monic polynomials $ \chi_{\Gamma}(t), \chi^{\ast}_{\Gamma}(t) \in \mathbb{Z}[t] $ such that $ \chi_{\Gamma}(2n+1) $ coincides with the number of proper $ n $-colorings of $ \Gamma $ and $ \chi_{\Gamma}(2n) $ coincides with the number of proper zero-free $ n $-colorings of $ \Gamma $. 
We call $ \chi_{\Gamma}(t) $ the \textbf{chromatic polynomial} of $ \Gamma $ and $ \chi^{\ast}_{\Gamma}(t) $ the \textbf{zero-free chromatic polynomial} of $ \Gamma $. 
\end{theorem}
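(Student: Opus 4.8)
The plan is to realize both counting functions as specializations of the lattice-point generating function $X_{\mathcal{A}_{\Gamma}}$ and then read off polynomiality from the M\"{o}bius expansion of Theorem \ref{XA}. A proper $n$-coloring of $\Gamma$ is exactly an integer point of the complement $M(\mathcal{A}_{\Gamma})$ whose coordinates lie in the color set $S$, where $S = [\pm n]$ (of cardinality $2n+1$) for ordinary $n$-colorings and $S = \{\pm 1, \dots, \pm n\}$ (of cardinality $2n$) for zero-free $n$-colorings. First I would substitute $x_{i} = 1$ for $i \in S$ and $x_{i} = 0$ for $i \notin S$ into the identity of Theorem \ref{XA}. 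On the left this specialization of $X_{\mathcal{A}_{\Gamma}}$ counts precisely the proper colorings with values in $S$, whereas the right side becomes
\[
\sum_{Y \in L(\mathcal{A}_{\Gamma})} \mu(\hat{0}, Y)\, \#\!\left( Y \cap S^{\ell} \right).
\]
Everything therefore reduces to evaluating $\#(Y \cap S^{\ell})$ for each flat $Y$ as a function of $n$.

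The heart of the argument is this local count, and it is where the structure of signed-graph flats enters through Proposition \ref{Zaslavsky lattice isomorphism}. Each $Y$ is an intersection of hyperplanes of the forms $\{z_{i}=z_{j}\}$, $\{z_{i}=-z_{j}\}$, and $\{z_{i}=0\}$, so the defining constraints partition $[\ell]$ into blocks on which either all coordinates equal $\pm s$ for a single free parameter $s$ (the balanced blocks, corresponding to balanced components in the frame matroid), or all coordinates are forced to $0$ (the zero-forced blocks, arising from unbalanced components and loops). Since $S = -S$, each balanced block contributes exactly $|S|$ admissible values of $s$, and the number of balanced blocks equals $\dim Y$; a zero-forced block contributes one choice if $0 \in S$ and none if $0 \notin S$. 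Hence
\[
\#\!\left(Y \cap [\pm n]^{\ell}\right) = (2n+1)^{\dim Y},
\]
while for the zero-free color set
\[
\#\!\left(Y \cap \{\pm 1, \dots, \pm n\}^{\ell}\right) = \begin{cases} (2n)^{\dim Y} & \text{if $Y$ has no zero-forced block,} \\ 0 & \text{otherwise.} \end{cases}
\]
I expect the careful verification of this block decomposition --- in particular that the two block types are exhaustive and that the number of balanced blocks is exactly $\dim Y$ --- to be the main obstacle, since it is precisely where the balanced/unbalanced dichotomy of the frame matroid must be translated into the linear algebra of $Y$.

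Finally I would assemble the two pieces. For ordinary colorings the M\"{o}bius sum becomes $\sum_{Y} \mu(\hat{0}, Y)(2n+1)^{\dim Y} = \chi_{\mathcal{A}_{\Gamma}}(2n+1)$, so setting $\chi_{\Gamma}(t) \coloneqq \chi_{\mathcal{A}_{\Gamma}}(t)$ does the job. For zero-free colorings only the flats with no zero-forced block survive, so defining $\chi^{\ast}_{\Gamma}(t) \coloneqq \sum_{Y} \mu(\hat{0}, Y)\, t^{\dim Y}$ with $Y$ ranging over exactly those flats gives $\chi^{\ast}_{\Gamma}(2n)$ equal to the number of proper zero-free $n$-colorings. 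Both are polynomials in $\mathbb{Z}[t]$ because $\mu$ is integer-valued, and both are monic of degree $\ell$: the unique flat of dimension $\ell$ is $\hat{0} = \mathbb{R}^{\ell}$, which has no zero-forced block and satisfies $\mu(\hat{0},\hat{0}) = 1$, so it contributes the leading term $t^{\ell}$ in each case. This yields the asserted polynomials and completes the proof.
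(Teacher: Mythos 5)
Your argument is correct, but note that the paper itself offers no proof of this statement: it is quoted as Zaslavsky's theorem, so there is nothing internal to compare against line by line. What you have written is a legitimate self-contained derivation using the paper's own machinery, and it is worth recording how it differs from Zaslavsky's original route. You specialize the identity of Theorem \ref{XA} at $x_{i}=1$ for $i$ in the symmetric color set $S$ and $x_{i}=0$ otherwise, reducing everything to the count $\#(Y\cap S^{\ell})$ for a flat $Y$. Your block analysis of that count is sound: under the correspondence of Proposition \ref{Zaslavsky lattice isomorphism}, the integer (indeed real) points of $Y=\bigcap_{e\in S}H_{e}$ are exactly the colorings in $K_{S}$ --- this is the content of Lemma \ref{coloring edgeset} and the bijection in Lemma \ref{power sum flat} --- so each balanced component of $\Gamma_{S}$ carries one free parameter $s$ with values $\pm s$ prescribed by the Harary bipartition (Theorem \ref{Harary}), each unbalanced component is forced to $0$, and the number of balanced components equals $\dim Y$ by Lemma \ref{Zaslavsky rank}. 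Since both $[\pm n]$ and $[\pm n]\setminus\{0\}$ are closed under negation, your counts $(2n+1)^{\dim Y}$ and, for balanced flats, $(2n)^{\dim Y}$ follow, and the monic degree-$\ell$ polynomials drop out with $\mu(\hat{0},\hat{0})=1$ supplying the leading term. Zaslavsky's own proof instead proceeds through deletion--contraction recurrences for the two counting functions; your approach buys a closed Whitney-type formula for free (in particular $\chi_{\Gamma}(t)=\chi_{\mathcal{A}_{\Gamma}}(t)$, and $\chi^{\ast}_{\Gamma}(t)$ as the sum over balanced flats only, which is Zaslavsky's balanced expansion), at the cost of presupposing the lattice isomorphism and the rank formula that the paper imports from Zaslavsky anyway. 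One small point: the theorem as printed contains a typo --- it should read $\chi^{\ast}_{\Gamma}(2n)$, not $\chi_{\Gamma}(2n)$, for the zero-free count --- and your proof correctly supplies the intended statement.
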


\section{Chromatic signed-symmetric functions}\label{sec:chromatic signed-symmetric functions}

\subsection{Basic properties}
Since a proper coloring corresponds to an integer point of the complement of the signed-graphic arrangement, we may define the chromatic signed-symmetric function of a signed graph $ \Gamma $ by
\begin{align*}
X_{\Gamma} = \sum_{\substack{\kappa \colon V_{\Gamma} \to \mathbb{Z} \\ \kappa \text{ is proper}}} x_{\kappa},  
\end{align*}
where $ x_{\kappa} \coloneqq \prod_{v \in V_{\Gamma}}x_{\kappa(v)} $. 

\begin{proposition}\label{X is signed-symmetric}
Let $ \Gamma $ be a signed graph. 
Then $ X_{\Gamma} \in \SSym_{\mathbb{Z}} $. 
\end{proposition}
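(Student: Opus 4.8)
The plan is to verify directly the two defining conditions of a signed-symmetric function from Definition \ref{definition of signed-symmetric functions}. Condition (i), the degree bound, is immediate: every monomial $ x_{\kappa} = \prod_{v \in V_{\Gamma}} x_{\kappa(v)} $ appearing in $ X_{\Gamma} $ has degree exactly $ |V_{\Gamma}| $, so all monomials have the same (hence bounded) degree. The whole content of the proposition is therefore condition (ii), the invariance under signed permutations. For this, I would fix an arbitrary signed permutation $ \sigma $ of $ \mathbb{Z} $ and show that the induced assignment on colorings, $ \kappa \mapsto \sigma \circ \kappa $, is a bijection of the set of proper colorings onto itself; then reindexing the defining sum gives $ \sigma(X_{\Gamma}) = X_{\Gamma} $ at once.

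First I would observe that, since $ \sigma $ is a bijection of $ \mathbb{Z} $, the map $ \kappa \mapsto \sigma \circ \kappa $ is a bijection on the set of all colorings $ \kappa \colon V_{\Gamma} \to \mathbb{Z} $, with inverse given by composition with $ \sigma^{-1} $ (which is again a signed permutation). It therefore suffices to check that $ \sigma \circ \kappa $ is proper whenever $ \kappa $ is; applying the same fact to $ \sigma^{-1} $ then shows the restriction to proper colorings is a bijection. Under the action $ \sigma(x_{i}) = x_{\sigma(i)} $ we have $ \sigma(x_{\kappa}) = x_{\sigma \circ \kappa} $, so
\begin{align*}
\sigma(X_{\Gamma}) = \sum_{\kappa \text{ proper}} x_{\sigma \circ \kappa} = \sum_{\kappa' \text{ proper}} x_{\kappa'} = X_{\Gamma},
\end{align*}
the middle equality being the reindexing $ \kappa' = \sigma \circ \kappa $.

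It remains to check preservation of the three properness conditions, and this is where the structure of a signed permutation is used. For a positive edge $ \{v,w\} \in E_{\Gamma}^{+} $, from $ \kappa(v) \neq \kappa(w) $ and injectivity of $ \sigma $ we get $ \sigma(\kappa(v)) \neq \sigma(\kappa(w)) $. For a negative edge $ \{v,w\} \in E_{\Gamma}^{-} $ one wants $ \sigma(\kappa(v)) \neq -\sigma(\kappa(w)) $; here I would invoke the signed condition $ -\sigma(\kappa(w)) = \sigma(-\kappa(w)) $, so that injectivity reduces the claim to $ \kappa(v) \neq -\kappa(w) $, which holds. For a loop $ v \in L_{\Gamma} $ one needs $ \sigma(\kappa(v)) \neq 0 $; the key observation is that $ \sigma(0) = 0 $, which follows from $ \sigma(0) = \sigma(-0) = -\sigma(0) $, so that $ \sigma $ fixes $ 0 $ and $ \sigma(\kappa(v)) = 0 $ forces $ \kappa(v) = 0 $.

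There is no serious obstacle here; the proposition is elementary once the bijection is set up. The only points requiring care are precisely the two places where the hypothesis $ \sigma(-i) = -\sigma(i) $ is genuinely needed, namely the negative-edge condition (via $ \sigma(-\kappa(w)) = -\sigma(\kappa(w)) $) and the loop condition (via $ \sigma(0) = 0 $); an ordinary permutation of $ \mathbb{Z} $ would not in general preserve properness, so the signed-permutation hypothesis is exactly what makes the argument work.
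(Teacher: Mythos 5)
Your proof is correct and follows essentially the same route as the paper: check the degree bound, then verify that composition with a signed permutation preserves properness of colorings (using $\sigma(-i)=-\sigma(i)$ for negative edges and the consequent $\sigma(0)=0$ for loops), and reindex the defining sum. The only difference is that you spell out the three equivalences and the bijectivity via $\sigma^{-1}$ explicitly, where the paper labels them ``easy to show.''
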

\begin{proof}
Since every monomial in $ X_{\Gamma} $ is of degree $ |V_{\Gamma}| $, it suffices to show that $ X_{\Gamma} $ is invariant under the action of signed permutations. 
Let $ \kappa \colon V_{\Gamma} \to \mathbb{Z} $ be a coloring and $ \sigma $ a signed permutation. 

It is easy to show the following. 
\begin{itemize}
\item $ \kappa(v) = \kappa(w) $ if and only if $ (\sigma \circ \kappa)(v) = (\sigma \circ \kappa)(w) $, 
\item $ \kappa(v) = -\kappa(w) $ if and only if $ (\sigma \circ \kappa)(v) = -(\sigma\circ \kappa)(w) $, 
\item $ \kappa(v) = 0 $ if and only if $ (\sigma \circ \kappa)(v) = 0 $. 
\end{itemize}

Therefore $ \kappa $ is proper if and only if $ \sigma \circ \kappa $ is proper. 
Hence if the monomial $ x_{\kappa} $ appears in $ X_{\Gamma} $, then so does $ x_{\sigma\circ\kappa} $. 
Thus $ X_{\Gamma} $ is invariant under the action of signed permutations. 
\end{proof}

Define the \textbf{disjoint union} of signed graphs $ \Gamma_{1} $ and $ \Gamma_{2} $ by
\begin{align*}
\Gamma_{1} \sqcup \Gamma_{2} \coloneqq \left(V_{\Gamma_{1}}\sqcup V_{\Gamma_{2}}, E_{\Gamma_{1}}^{+}\sqcup E_{\Gamma_{2}}^{+}, E_{\Gamma_{1}}^{-}\sqcup E_{\Gamma_{2}}^{-}, L_{\Gamma_{1}}\sqcup L_{\Gamma_{2}} \right). 
\end{align*}
The following proposition is obvious. 

\begin{proposition}\label{disjoint union product}
$ X_{\Gamma_{1}\sqcup \Gamma_{2}} = X_{\Gamma_{1}} X_{\Gamma_{2}} $. 
\end{proposition}

The following proposition shows that the chromatic signed-symmetric function unifies two kinds of chromatic polynomials for a signed graph. 
\begin{proposition}
If we evaluate $ X_{\Gamma} $ at $ x_{i} = 1$ for $i \in [\pm n] $ and $ x_{i}=0 $ for $ i \not\in [\pm n] $ (resp. $ x_{i} = 1 $ for $ i \in [\pm n]\setminus\{0\} $ and $ x_{i}=0 $ for $ i \not\in [\pm n]\setminus\{0\} $), then the value is $ \chi_{\Gamma}(2n+1) $ (resp. $ \chi^{\ast}_{\Gamma}(2n) $). 
\end{proposition}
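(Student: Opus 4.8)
The plan is to unwind both claimed evaluations directly from the definition of $ X_{\Gamma} = \sum_{\kappa \text{ proper}} x_{\kappa} $, where the sum is over all proper colorings $ \kappa \colon V_{\Gamma} \to \mathbb{Z} $ and $ x_{\kappa} = \prod_{v \in V_{\Gamma}} x_{\kappa(v)} $. The key observation is that the two prescribed substitutions act as an indicator filter: setting $ x_{i} = 1 $ for $ i $ in a chosen index set $ S $ and $ x_{i} = 0 $ otherwise causes a monomial $ x_{\kappa} = \prod_{v} x_{\kappa(v)} $ to survive (contributing exactly $ 1 $) precisely when $ \kappa(v) \in S $ for every vertex $ v $, and to vanish otherwise. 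Thus after substitution each surviving proper coloring contributes $ 1 $ and the evaluation simply counts proper colorings with image contained in $ S $.

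First I would treat the case $ S = [\pm n] $. Under this substitution $ X_{\Gamma} $ evaluates to the number of proper colorings $ \kappa \colon V_{\Gamma} \to [\pm n] $, which is exactly the number of proper $ n $-colorings in Zaslavsky's sense. By Zaslavsky's theorem this count equals $ \chi_{\Gamma}(2n+1) $, giving the first claim. Next I would treat $ S = [\pm n] \setminus \{0\} $. The same filtering argument shows that $ X_{\Gamma} $ evaluates to the number of proper colorings $ \kappa \colon V_{\Gamma} \to [\pm n] \setminus \{0\} $, that is, the number of proper colorings that are simultaneously zero-free and $ n $-bounded. This is precisely the count of proper zero-free $ n $-colorings, which by Zaslavsky's theorem equals $ \chi^{\ast}_{\Gamma}(2n) $, giving the second claim.

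The only point requiring care — and it is essentially bookkeeping rather than a genuine obstacle — is confirming that the notion of ``proper coloring'' built into the definition of $ X_{\Gamma} $ coincides with Zaslavsky's notion of proper (and proper zero-free) coloring, so that the counts match the ones appearing in his chromatic polynomials. This is immediate from the three defining conditions for properness listed before the statement, which encode exactly the avoidance of $ \kappa(v) = \kappa(w) $ on positive edges, $ \kappa(v) = -\kappa(w) $ on negative edges, and $ \kappa(v) = 0 $ at loops; these are Zaslavsky's conditions. One subtlety worth noting explicitly is that the substitution must be applied to the defining sum $ X_{\Gamma} = \sum_{\kappa \text{ proper}} x_{\kappa} $ rather than to the chamber-sum definition, but since the two definitions agree (a proper coloring is an integer point of $ M(\mathcal{A}_{\Gamma}) $, and conversely), either may be used. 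I expect no genuine difficulty; the proposition is a direct translation of a finite evaluation into a lattice-point count, and the result follows by matching it against Zaslavsky's defining property of $ \chi_{\Gamma} $ and $ \chi^{\ast}_{\Gamma} $.
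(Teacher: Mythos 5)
Your proof is correct and coincides with the argument the paper intends: the paper states this proposition without any written proof, treating as immediate exactly the observation you make, namely that the substitution acts as an indicator filter so the evaluation counts proper colorings with image in $[\pm n]$ (resp.\ $[\pm n]\setminus\{0\}$), which Zaslavsky's theorem identifies with $\chi_{\Gamma}(2n+1)$ (resp.\ $\chi^{\ast}_{\Gamma}(2n)$). Your bookkeeping remarks (that the two definitions of $X_{\Gamma}$ agree, that only finitely many monomials survive the substitution, and that the paper's properness conditions are Zaslavsky's) are all accurate and complete the argument.
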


\subsection{Power sum expansion}
Let $ \Gamma $ be a signed graph and $ S $ a subset of $ E_{\Gamma} $. 
Define $ \Gamma_{S} $ is the signed graph on vertex set $ V_{\Gamma} $ with edge set $ S $.
Let $ U $ denote the set of the vertices of the unbalanced connected components of $ \Gamma_{S} $.
Suppose that $ \Gamma_{S} $ has $ r $ balanced connected components.
The vertex set of every balanced components is decomposed as $ A_{j} \sqcup B_{j} \ (1 \leq j \leq r) $ by the following theorem. 

\begin{theorem}[Harary {\cite[Theorem 3]{harary1953notion-tmmj}}]\label{Harary}
A signed graph $ \Gamma $ is balanced if and only if there exist a decomposition $ V_{\Gamma} = A \sqcup B $ such that each positive edge joins two vertices in $ A $ or $ B $ and each negative edge is between a vertex in $ A $ and a vertex in $ B $.
\end{theorem}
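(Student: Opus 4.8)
The plan is to encode a bipartition as a sign function $s \colon V_{\Gamma} \to \{+1,-1\}$ and set $A \coloneqq s^{-1}(+1)$, $B \coloneqq s^{-1}(-1)$; the two conditions in the statement then say precisely that $s$ is preserved across positive edges and flipped across negative edges. Since those conditions involve only positive and negative edges, and since a balanced signed graph contains no loop (a loop being an unbalanced cycle of length $1$), I would assume throughout that $\Gamma$ is loopless.

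First, for the direction assuming such a decomposition exists, I would traverse an arbitrary cycle $v_{1}, v_{2}, \dots, v_{k}, v_{1}$: a positive edge keeps consecutive vertices on the same side of the partition while a negative edge switches sides, and since the traversal returns to $v_{1}$ the number of switches, namely the number of negative edges, must be even. Hence every cycle is balanced, so $\Gamma$ is balanced.

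For the converse I would first reduce to the connected case, since distinct components share no edges and a partition may be chosen independently in each. For connected balanced $\Gamma$, I would fix a spanning tree $T$ and a base vertex $v_{0}$, and define $s(v) \coloneqq (-1)^{e(v)}$, where $e(v)$ is the number of negative edges on the unique $T$-path from $v_{0}$ to $v$. This is well defined because $T$-paths are unique, and by construction each tree edge already satisfies the required condition.

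The crux is the non-tree edges, and this is exactly where the balance hypothesis is used. Given a non-tree edge $e = \{v,w\}$, the fundamental cycle formed by $e$ together with the $T$-path from $v$ to $w$ is a cycle of $\Gamma$ and hence balanced. Writing $m$ for the number of negative edges on that $T$-path, the cancellation of the shared initial segments of the $v_{0}$-to-$v$ and $v_{0}$-to-$w$ tree paths gives $s(v)s(w) = (-1)^{m}$. If $e$ is positive, balance forces $m$ even, so $s(v) = s(w)$ and $e$ lies inside a part; if $e$ is negative, balance forces $m$ odd, so $s(v) = -s(w)$ and $e$ joins the two parts. I expect the only genuinely delicate point to be recognizing that passing to a spanning tree reduces all the cycle-balance conditions to the finitely many fundamental cycles, which is what makes $s$ globally consistent without a separate well-definedness argument over arbitrary paths.
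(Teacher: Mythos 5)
Your proof is correct. Note that the paper offers no proof of this statement at all---it is quoted verbatim from Harary \cite{harary1953notion-tmmj}---so there is nothing internal to compare against; your argument is the standard one (parity of sign switches along a closed traversal for sufficiency; the spanning-tree potential $s(v)=(-1)^{e(v)}$ for necessity, with balance invoked only on the fundamental cycle of each non-tree edge, which, as you observe, removes any need for a separate well-definedness argument over arbitrary paths). Two small points in your favor that are worth keeping explicit: first, in this paper signed graphs may carry loops and a loop is unbalanced by definition, so the ``if'' direction is literally false for a graph with a loop (the decomposition conditions ignore $L_\Gamma$ entirely); your restriction to loopless graphs is the right repair and matches the paper's only use of the theorem, namely on the balanced connected components of $\Gamma_S$. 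Second, balance also rules out $E^{+}_{\Gamma} \cap E^{-}_{\Gamma} \neq \varnothing$, since a parallel positive/negative pair is a cycle of length $2$ with exactly one negative edge; this guarantees the underlying graph in your spanning-tree step is simple, so every fundamental cycle is an honest cycle in the paper's sense and your parity computation $s(v)s(w)=(-1)^{m}$ applies without degenerate cases.
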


Note that the decomposition $ V_{\Gamma} = U \sqcup A_{1} \sqcup B_{1} \sqcup \dots \sqcup A_{r} \sqcup B_{r} $ is determined by $ S \subseteq E_{\Gamma} $. 
Let $ u = |U| $ and for every $ j $ let $ a_{j} = |A_{j}| $ and $ b_{j} = |B_{j}| $. 
The \textbf{type} of $ S $, denoted by $ \type(S) $, is the partition $ (u,\lambda) $, where $ \lambda = \begin{pmatrix}
a_{1} & \dots & a_{r} \\
b_{1} & \dots & b_{r} 
\end{pmatrix} $. 


Define $ K_{S} $ by 
\begin{align*}
K_{S} \coloneqq \Set{\kappa \colon V_{\Gamma} \to \mathbb{Z} | \begin{array}{ll}
\kappa(v) = 0 & \text{ if } v \in U, \\
\kappa(v) = \kappa(w) & \text{ if } v,w \in A_{j} \text{ or } v,w \in B_{j} \text{ for some } j,\\
\kappa(v) = -\kappa(w) & \text{ if } v \in A_{j} \text{ and } w \in B_{j} \text{ for some } j. 
\end{array} }. 
\end{align*}

\begin{lemma}\label{power sum coloring}
Let $ S \subseteq E_{\Gamma} $. 
Then 
\begin{align*}
p_{\type(S)} = \sum_{\kappa \in K_{S}}x_{\kappa}. 
\end{align*}
\end{lemma}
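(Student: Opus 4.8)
The plan is to show that both sides enumerate the same collection of monomials by exhibiting an explicit bijective parametrization of $ K_{S} $ and then factorizing the resulting sum across the connected components. First I would observe that, by the three defining conditions of $ K_{S} $, any coloring $ \kappa \in K_{S} $ is entirely determined by the decomposition $ V_{\Gamma} = U \sqcup A_{1} \sqcup B_{1} \sqcup \cdots \sqcup A_{r} \sqcup B_{r} $ together with a single value $ c_{j} \in \mathbb{Z} $ for each balanced component $ j $: the constraints force $ \kappa(v) = 0 $ for all $ v \in U $, $ \kappa(v) = c_{j} $ for all $ v \in A_{j} $, and $ \kappa(v) = -c_{j} $ for all $ v \in B_{j} $. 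Conversely, every tuple $ (c_{1}, \dots, c_{r}) \in \mathbb{Z}^{r} $ yields a well-defined element of $ K_{S} $, so $ K_{S} $ is in bijection with $ \mathbb{Z}^{r} $. The decomposition itself is well-defined thanks to Harary's theorem (Theorem \ref{Harary}), which supplies the bipartition $ A_{j} \sqcup B_{j} $ of each balanced component.

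Next I would compute the monomial attached to such a $ \kappa $. Since $ A_{j} $ contains $ a_{j} $ vertices all colored $ c_{j} $ and $ B_{j} $ contains $ b_{j} $ vertices all colored $ -c_{j} $, while the $ u = |U| $ vertices of $ U $ are colored $ 0 $, we obtain
\begin{align*}
x_{\kappa} = x_{0}^{u} \prod_{j=1}^{r} x_{c_{j}}^{a_{j}} x_{-c_{j}}^{b_{j}}.
\end{align*}
Summing over $ K_{S} $ and using the bijection with $ \mathbb{Z}^{r} $ to split the sum into independent sums over each $ c_{j} $ then gives
\begin{align*}
\sum_{\kappa \in K_{S}} x_{\kappa}
= x_{0}^{u} \prod_{j=1}^{r} \left( \sum_{c_{j} \in \mathbb{Z}} x_{c_{j}}^{a_{j}} x_{-c_{j}}^{b_{j}} \right)
= x_{0}^{u} \prod_{j=1}^{r} p_{\begin{psmallmatrix} a_{j} \\ b_{j} \end{psmallmatrix}}
= p_{(u,\lambda)},
\end{align*}
which is exactly $ p_{\type(S)} $ by the definition of the power sum signed-symmetric function.

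The argument is essentially a bookkeeping computation, so I do not anticipate a genuine obstacle. The one point requiring care is confirming that the map $ (c_{1}, \dots, c_{r}) \mapsto \kappa $ is a bijection onto $ K_{S} $, i.e. that the defining conditions impose no further relations linking distinct components and leave each $ c_{j} $ free to range over all of $ \mathbb{Z} $; this is where the fact that $ U, A_{1}, B_{1}, \dots, A_{r}, B_{r} $ is a genuine partition of $ V_{\Gamma} $ (again a consequence of the component decomposition and Theorem \ref{Harary}) is used. The interchange of the product and the infinite sum is legitimate since every monomial on both sides is homogeneous of degree $ |V_{\Gamma}| $, so the manipulation is a valid identity of formal power series.
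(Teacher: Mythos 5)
Your proposal is correct and matches the paper's argument: the paper defines the same parametrization as a map $\phi \colon \mathbb{Z}^{r} \to K_{S}$ (sending $(i_{1},\dots,i_{r})$ to the coloring that is $0$ on $U$, $i_{j}$ on $A_{j}$, and $-i_{j}$ on $B_{j}$), notes it is a bijection, and identifies the resulting sum with the expanded product $x_{0}^{u}\,p_{\begin{psmallmatrix} a_{1} \\ b_{1} \end{psmallmatrix}} \cdots p_{\begin{psmallmatrix} a_{r} \\ b_{r} \end{psmallmatrix}}$, exactly as you do in the reverse direction. Your added care about injectivity/surjectivity of the parametrization and the formal legitimacy of factorizing the sum is sound but the same bookkeeping the paper leaves implicit.
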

\begin{proof}
Let $ u = |U| $ and $ a_{j} = |A_{j}|, b_{j} = |B_{j}| $ for each $ j \in \{1, \dots, r\} $. 
Then $ \type(S) = (u, \lambda) $, where $ \lambda = \begin{pmatrix}
a_{1} & \dots & a_{r} \\
b_{1} & \dots & b_{r} 
\end{pmatrix} $ and 
\begin{align*}
p_{\type(S)} 
&= x_{0}^{u}p_{\begin{psmallmatrix}
a_{1} \\ b_{1} \\
\end{psmallmatrix}} \cdots p_{\begin{psmallmatrix}
a_{r} \\ b_{r}
\end{psmallmatrix}} \\
&= \sum_{(i_{1}, \dots, i_{r}) \in \mathbb{Z}^{r}}x_{0}^{u}x_{i_{1}}^{a_{1}}x_{-i_{1}}^{b_{1}} \cdots x_{i_{r}}^{a_{r}}x_{-i_{r}}^{b_{r}}. 
\end{align*}
Define a map $ \phi \colon \mathbb{Z}^{r} \to K_{S} $ by
\begin{align*}
\phi(i_{1}, \dots, i_{r})(v) \coloneqq \begin{cases}
0 & \text{ if } v \in U, \\
i_{j} & \text{ if } v \in A_{j}, \\
-i_{j} & \text{ if } v \in B_{j}. 
\end{cases}
\end{align*}
One can show that $ \phi $ is bijective and hence $ p_{\type(S)} = \sum_{\kappa \in K_{S}}x_{\kappa} $. 
\end{proof}

Let $ \Gamma $ be a signed graph on $ [\ell] $ and $ \mathcal{A}_{\Gamma} $ the corresponding signed-graphic arrangement.
The \textbf{type} of $ Y \in L(\mathcal{A}_{\Gamma}) $, denoted by $ \type(Y) $, is the type of $ S $, where $ S \in L(\Gamma) $ is the corresponding flat. 

\begin{lemma}\label{power sum flat} 
For every $ Y \in L(\mathcal{A}_{\Gamma}) $, 
\begin{align*}
p_{\type(Y)} = \sum_{\boldsymbol{\alpha} \in Y \cap \mathbb{Z}^{\ell}} x_{\boldsymbol{\alpha}}. 
\end{align*}
\end{lemma}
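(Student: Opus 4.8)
Lemma \ref{power sum flat} claims that for a flat $Y \in L(\mathcal{A}_\Gamma)$, the power sum function $p_{\type(Y)}$ equals the generating function $\sum_{\boldsymbol{\alpha} \in Y \cap \mathbb{Z}^\ell} x_{\boldsymbol{\alpha}}$.

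**Key connections:**

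We have:
- $Y$ corresponds to a flat $S \in L(\Gamma)$ (via Zaslavsky isomorphism, Prop 4.9)
- $\type(Y) = \type(S)$ by definition
- Lemma \ref{power sum coloring} says $p_{\type(S)} = \sum_{\kappa \in K_S} x_\kappa$

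So the strategy is clear: show that $Y \cap \mathbb{Z}^\ell = K_S$ (as sets, with colorings = integer points). Then combine with Lemma \ref{power sum coloring}.

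**The heart of the matter:**

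I need to understand how the flat $Y$ (as a subspace of $\mathbb{R}^\ell$) relates to the coloring constraints defining $K_S$.

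The flat $Y = \bigcap_{H \in \mathcal{A}_{\Gamma,S}} H$ where $S$ is the flat. Each edge in $S$ imposes:
- positive edge $\{i,j\}$: $z_i = z_j$
- negative edge $\{i,j\}$: $z_i = -z_j$
- loop $i$: $z_i = 0$

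For a balanced component with decomposition $A_j \sqcup B_j$ (Harary), the constraints force all vertices in $A_j$ equal to some value, all in $B_j$ equal to its negation.

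For an unbalanced component (vertices in $U$), the cycle constraints force $z_v = 0$ for all $v$ in that component.

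This is exactly the definition of $K_S$! So $Y \cap \mathbb{Z}^\ell = K_S$ when we identify colorings with integer points.

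---

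The plan is to reduce Lemma \ref{power sum flat} to Lemma \ref{power sum coloring} by identifying the integer points of the flat $Y$ with the set of colorings $K_{S}$, where $S \in L(\Gamma)$ is the flat corresponding to $Y$ under the isomorphism of Proposition \ref{Zaslavsky lattice isomorphism}. Since $\type(Y) = \type(S)$ by definition and Lemma \ref{power sum coloring} already gives $p_{\type(S)} = \sum_{\kappa \in K_{S}} x_{\kappa}$, it suffices to prove the set equality
\begin{align*}
Y \cap \mathbb{Z}^{\ell} = K_{S},
\end{align*}
under the standard identification of a coloring $\kappa \colon [\ell] \to \mathbb{Z}$ with the integer point $(\kappa(1), \dots, \kappa(\ell)) \in \mathbb{Z}^{\ell}$.

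The core of the argument is unwinding the description of $Y$ as a subspace. The flat $Y$ is the intersection $\bigcap_{H} H$ over all hyperplanes $H \in \mathcal{A}_{\Gamma}$ corresponding to edges in $S$, so a point $\boldsymbol{\alpha}$ lies in $Y$ exactly when $\alpha_{i} = \alpha_{j}$ for each positive edge $\{i,j\} \in S$, $\alpha_{i} = -\alpha_{j}$ for each negative edge, and $\alpha_{i} = 0$ for each loop. I would then verify that these linear constraints, propagated along the connected components of $\Gamma_{S}$, coincide with the defining conditions of $K_{S}$. For a balanced component, Harary's decomposition $V = A_{j} \sqcup B_{j}$ (Theorem \ref{Harary}) guarantees that following positive and negative edges forces all vertices in $A_{j}$ to share a common value $i_{j}$ and all vertices in $B_{j}$ to share the value $-i_{j}$, with no further constraint; this matches the second and third clauses defining $K_{S}$. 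For an unbalanced component, I would show that the presence of an unbalanced cycle (or loop) forces $\alpha_{v} = 0$ for every vertex $v$ in that component: traversing the unbalanced cycle multiplies the sign an odd number of times, yielding a constraint of the form $\alpha_{v} = -\alpha_{v}$, hence $\alpha_{v} = 0$, and this zero then propagates to the whole component. This matches the first clause $\kappa(v) = 0$ for $v \in U$.

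The one point demanding care, and the likely main obstacle, is confirming that the propagation of constraints respects the matroid-theoretic notion of flat used to define $S$, so that no \emph{extra} coincidences occur beyond those forced by the edges in $S$ themselves. Concretely, I must ensure that within a balanced component the value $i_{j}$ remains a free parameter (ranging over all of $\mathbb{Z}$), which is where the well-definedness of Harary's bipartition is essential: consistency of the sign assignments along every cycle is exactly the balance condition, so the constraints are never contradictory and leave precisely one degree of freedom per balanced component. Once the set equality $Y \cap \mathbb{Z}^{\ell} = K_{S}$ is established, the lemma follows immediately by chaining equalities:
\begin{align*}
p_{\type(Y)} = p_{\type(S)} = \sum_{\kappa \in K_{S}} x_{\kappa} = \sum_{\boldsymbol{\alpha} \in Y \cap \mathbb{Z}^{\ell}} x_{\boldsymbol{\alpha}}.
\end{align*}
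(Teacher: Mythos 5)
Your proposal is correct and follows essentially the same route as the paper: the paper's proof also reduces Lemma \ref{power sum flat} to Lemma \ref{power sum coloring} by observing that $\kappa \mapsto (\kappa(1), \dots, \kappa(\ell))$ is a bijection $K_{S} \to Y \cap \mathbb{Z}^{\ell}$ for the flat $S \in L(\Gamma)$ corresponding to $Y$. The only difference is that the paper asserts this bijection without elaboration, whereas you spell out the constraint-propagation verification (unbalanced cycles forcing zero, Harary's bipartition giving one free parameter per balanced component), which is the same reasoning the paper itself uses later in the proof of Lemma \ref{coloring edgeset}.
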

\begin{proof}
Suppose that $ S \in L(\Gamma) $ is the flat corresponding to $ Y $. 
Then the map $ \phi \colon K_{S} \to Y \cap \mathbb{Z}^{\ell} $ determined by $ \phi(\kappa) = (\kappa(1), \dots, \kappa(\ell)) $ is a bijection. 
Therefore Lemma \ref{power sum coloring} leads to the conclusion. 
\end{proof}

Given a coloring $ \kappa \colon V_{\Gamma} \to \mathbb{Z} $, we define $ E_{\kappa} = E_{\kappa}^{+} \sqcup E_{\kappa}^{-} \sqcup L_{\kappa} $ by 
\begin{align*}
E_{\kappa}^{+} &\coloneqq \Set{\{v,w\} \in E_{\Gamma}^{+} | \kappa(v) = \kappa(w) \quad (1 \leq v < w \leq \ell) }, \\
E_{\kappa}^{-} &\coloneqq \Set{\{v,w\} \in E_{\Gamma}^{-} | \kappa(v) = -\kappa(w) \quad (1 \leq v < w \leq \ell) }, \\
L_{\kappa} &\coloneqq \Set{v \in L_{\Gamma} | \kappa(v) = 0}. 
\end{align*}

\begin{lemma}\label{coloring edgeset}
For $ S \subseteq E_{\Gamma} $ and $ \kappa \colon V_{\Gamma} \to \mathbb{Z} $, $ \kappa \in K_{S} $ if and only if $ S \subseteq E_{\kappa} $. 
\end{lemma}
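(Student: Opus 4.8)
The plan is to prove both implications by tracking how a coloring behaves along the edges of $ S $, encoding each non-loop edge $ e $ with a sign $ \varepsilon(e) \in \{+1,-1\} $, where $ \varepsilon(e) = +1 $ for $ e \in E_{\Gamma}^{+} $ and $ \varepsilon(e) = -1 $ for $ e \in E_{\Gamma}^{-} $. The elementary observation underlying everything is that, for a non-loop edge $ e = \{v,w\} \in S $, the condition $ e \in E_{\kappa} $ is equivalent to $ \kappa(w) = \varepsilon(e)\kappa(v) $, while for a loop $ e $ at $ v $ the condition $ e \in L_{\kappa} $ is equivalent to $ \kappa(v) = 0 $. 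Thus $ S \subseteq E_{\kappa} $ means precisely that $ \kappa $ satisfies the relation $ \kappa(w) = \varepsilon(e)\kappa(v) $ across every non-loop edge of $ S $ and vanishes at every loop vertex of $ S $.

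For the forward implication, assume $ \kappa \in K_{S} $ and take $ e \in S $. I would split into cases according to the connected component of $ \Gamma_{S} $ containing $ e $. If $ e $ lies in an unbalanced component, then both endpoints lie in $ U $, so $ \kappa $ is $ 0 $ there; this forces $ \kappa(v) = \kappa(w) $ for a positive edge, $ \kappa(v) = -\kappa(w) $ for a negative edge (both sides being zero), and $ \kappa(v) = 0 $ for a loop, so in every case $ e \in E_{\kappa} $. If $ e $ lies in a balanced component with Harary decomposition $ A_{j} \sqcup B_{j} $ (Theorem \ref{Harary}), then a positive $ e $ joins two vertices of the same part while a negative $ e $ joins vertices of opposite parts; the defining relations of $ K_{S} $ then give exactly $ e \in E_{\kappa}^{+} $ or $ e \in E_{\kappa}^{-} $.

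For the converse, assume $ S \subseteq E_{\kappa} $. Along any path $ P $ in $ \Gamma_{S} $ from $ v_{0} $ to $ v $, iterating $ \kappa(w) = \varepsilon(e)\kappa(v) $ yields $ \kappa(v) = \left(\prod_{e \in P}\varepsilon(e)\right)\kappa(v_{0}) $. On a balanced component this product is independent of the chosen path and equals $ +1 $ exactly when $ v_{0} $ and $ v $ lie in the same Harary part; this immediately gives the two relations $ \kappa(v) = \kappa(w) $ within each part and $ \kappa(v) = -\kappa(w) $ across the parts required for membership in $ K_{S} $. On an unbalanced component I would first produce a vertex where $ \kappa $ vanishes: traversing an unbalanced cycle back to its start (or using a loop, or a length-$ 2 $ cycle) yields $ \kappa(v_{1}) = -\kappa(v_{1}) $, hence $ \kappa(v_{1}) = 0 $; propagating the relation along paths then forces $ \kappa \equiv 0 $ on the whole component, that is, on all of $ U $.

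I expect the main obstacle to be the bookkeeping in the converse on unbalanced components: one must justify that every unbalanced component contains an unbalanced cycle or a loop, verify that traversing it contributes the sign $ -1 $ (so that $ \kappa = 0 $ is forced at a vertex), and check that this zero value then spreads to every vertex of the component via connectedness. The balanced case hinges on the path-independence of the sign product, which is exactly the content of balancedness and of the Harary decomposition, so it should be routine once that equivalence is recorded carefully.
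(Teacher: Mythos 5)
Your proof is correct and matches the paper's argument essentially step for step: the forward implication by casework on whether the edge lies in an unbalanced component (where $\kappa$ vanishes because its endpoints are in $U$) or joins the Harary parts $A_{j}$, $B_{j}$ of a balanced one, and the converse by extracting an unbalanced cycle (or loop, or length-$2$ cycle) to force a zero that propagates across the whole component, together with the parity of negative edges along paths within balanced components. Your sign-product $\prod_{e \in P}\varepsilon(e)$ is merely a compact encoding of the paper's parity bookkeeping (incidentally, the paper's phrase ``even number of positive edges'' in its proof is a typo for negative edges), so the two proofs coincide in substance.
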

\begin{proof}
First we assume $ \kappa \in K_{S} $ and show $ S \subseteq E_{\kappa} $. 
Suppose $ \{v,w\} $ is a positive edge in $ S $. 
Then both of $ v $ and $ w $ belong to $ U $ or there exists $ j $ such that both of $ v $ and $ w $ belong to $ A_{j} $ or $ B_{j} $. 
In any cases we have $ \kappa(v) = \kappa(w) $ and $ \{v,w\} \in E_{\kappa}^{+} $. 
By almost the same reason every negative edge in $ S $ belongs to $ E_{\kappa}^{-} $. 
Assume $ v $ is a loop in $ S $. 
Then $ v \in U $ and $ \kappa(v) = 0 $. 
Therefore $ v \in L_{\kappa} $. 
Thus $ S \subseteq E_{\kappa} $. 

Now we assume that $ S \subseteq E_{\kappa} $ and show $ \kappa \in K_{S} $. 
Let $ v \in U $. 
Then there exists a unbalanced cycle in the unbalanced component containing $ v $. 
Since an unbalanced cycle is a loop or a cycle containing an odd number of edges, the colors of the vertices in it must be $ 0 $. 
Since $ v $ and the cycle belong to the same component, we have $ \kappa(v) = 0 $. 
Next suppose that $ v,w \in A_{j} $ or $ v,w \in B_{j} $ for some $ j $. 
Then there exists a path on vertices in $ A_{j} \sqcup B_{j} $ joining $ v $ and $ w $. 
The path must have an even number of positive edges and hence $ \kappa(v) = \kappa(w) $. 
When $ v \in A_{j} $ and $ w \in B_{j} $ for some $ j $, by almost the same reason, $ \kappa(v) = -\kappa(w) $. 
Therefore $ \kappa \in K_{S} $. 
\end{proof}

The following theorem is an analogue of \cite[Theorem 2.5]{stanley1995symmetric-aim} and the proof is very similar. 
\begin{theorem}\label{power sum expansion subset}
Given a signed graph $ \Gamma $, 
\begin{align*}
X_{\Gamma} = \sum_{S \subseteq E_{\Gamma}}(-1)^{|S|}p_{\type(S)}. 
\end{align*}
\end{theorem}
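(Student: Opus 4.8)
The plan is to compute the chromatic signed-symmetric function by inclusion–exclusion over the edge set, exactly mirroring Stanley's argument for the ordinary chromatic symmetric function. The fundamental observation is that a proper coloring is one whose associated ``violated-edge set'' is empty, so I would start from the set of \emph{all} colorings $\kappa\colon V_{\Gamma}\to\mathbb{Z}$ and sieve out the improper ones. Concretely, for an arbitrary coloring $\kappa$ the set $E_{\kappa}$ introduced just before the statement records exactly which edges are monochromatic (positive edges with equal colors), anti-monochromatic (negative edges with opposite colors), or grounded (loops colored $0$); thus $\kappa$ is proper precisely when $E_{\kappa}=\varnothing$.

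The first step is to write the indicator of properness via the standard Möbius-type identity on the Boolean lattice of subsets: for each coloring $\kappa$,
\begin{align*}
[\kappa\text{ is proper}]=[E_{\kappa}=\varnothing]=\sum_{S\subseteq E_{\kappa}}(-1)^{|S|}.
\end{align*}
Summing $x_{\kappa}$ against this identity over all colorings and interchanging the two sums gives
\begin{align*}
X_{\Gamma}=\sum_{\substack{\kappa\colon V_{\Gamma}\to\mathbb{Z}\\ E_{\kappa}=\varnothing}}x_{\kappa}
=\sum_{\kappa\colon V_{\Gamma}\to\mathbb{Z}}\Bigl(\sum_{S\subseteq E_{\kappa}}(-1)^{|S|}\Bigr)x_{\kappa}
=\sum_{S\subseteq E_{\Gamma}}(-1)^{|S|}\sum_{\substack{\kappa\colon V_{\Gamma}\to\mathbb{Z}\\ S\subseteq E_{\kappa}}}x_{\kappa}.
\end{align*}
Here the outer sum now runs over \emph{all} subsets $S$ of the full edge set, and the condition $S\subseteq E_{\kappa}$ is precisely the content of the previously established Lemma~\ref{coloring edgeset}, which says $S\subseteq E_{\kappa}$ if and only if $\kappa\in K_{S}$.

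The second step is to identify the inner sum. By Lemma~\ref{coloring edgeset} the colorings $\kappa$ with $S\subseteq E_{\kappa}$ are exactly the elements of $K_{S}$, so the inner sum equals $\sum_{\kappa\in K_{S}}x_{\kappa}$, and Lemma~\ref{power sum coloring} evaluates this to the power sum signed-symmetric function $p_{\type(S)}$. Substituting yields
\begin{align*}
X_{\Gamma}=\sum_{S\subseteq E_{\Gamma}}(-1)^{|S|}p_{\type(S)},
\end{align*}
which is the desired formula. The argument is essentially bookkeeping once the two lemmas are in hand, so I do not expect a genuine obstacle. The one point demanding care is the interchange of summation: because we work in the ring of formal power series, I would confirm that for each fixed monomial $x_{\boldsymbol{\alpha}}$ only finitely many pairs $(\kappa,S)$ contribute (indeed $\kappa$ is determined up to finitely many choices by the monomial and $S$ ranges over a finite set), so the reordering is legitimate and the coefficient extraction is unambiguous. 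This finiteness is automatic since every monomial in $X_{\Gamma}$ has degree $|V_{\Gamma}|$, which is exactly the degree-boundedness guaranteeing we stay inside $\SSym_{\mathbb{Z}}$.
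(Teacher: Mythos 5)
Your proof is correct and essentially identical to the paper's: both rest on the Boolean-lattice sign identity $\sum_{S\subseteq E_{\kappa}}(-1)^{|S|}=1$ if $E_{\kappa}=\varnothing$ and $0$ otherwise, interchange the sums over $\kappa$ and $S$, and invoke Lemma \ref{coloring edgeset} together with Lemma \ref{power sum coloring} to identify the inner sum with $p_{\type(S)}$ --- the only difference being that you compute from $X_{\Gamma}$ toward the right-hand side while the paper runs the same chain of equalities in the reverse direction. Your closing remark verifying the interchange of summation (finitely many contributing pairs $(\kappa,S)$ per monomial) is a harmless elaboration of a point the paper leaves implicit.
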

\begin{proof}
By Lemma \ref{power sum coloring} and \ref{coloring edgeset}, 
\begin{align*}
\sum_{S\subseteq E_{\Gamma}}(-1)^{|S|}p_{\type(S)} 
= \sum_{S\subseteq E_{\Gamma}}\sum_{\kappa \in K_{S}}(-1)^{|S|}x_{\kappa}
= \sum_{\kappa \colon V_{\Gamma} \to \mathbb{Z}}\sum_{S \subseteq E_{\kappa}}(-1)^{|S|}x_{\kappa}. 
\end{align*}
Since 
\begin{align*}
\sum_{S \subseteq E_{\kappa}}(-1)^{|S|} = \begin{cases}
1 & \text{ if } E_{\kappa} = \varnothing \\
0 & \text{ if } E_{\kappa} \neq \varnothing
\end{cases}
\end{align*}
and $ E_{\kappa} = \varnothing $ if and only if $ \kappa $ is proper, 
\begin{align*}
\sum_{\kappa \colon V_{\Gamma} \to \mathbb{Z}}\sum_{S \subseteq E_{\kappa}}(-1)^{|S|}x_{\kappa}
= \sum_{\substack{\kappa \colon V_{\Gamma} \to \mathbb{Z} \\ \kappa \text{ is proper}}} x_{\kappa}
= X_{\Gamma}. 
\end{align*}
\end{proof}

There is another expansion with respect to power sum basis, which is an analogue of Theorem 2.6 and Corollary 2.7 in \cite{stanley1995symmetric-aim}. 
\begin{theorem}\label{power sum expansion flat}
Given a signed graph $ \Gamma $ on $ [\ell] $, 
\begin{align*}
X_{\Gamma} = \sum_{Y \in L(\mathcal{A}_{\Gamma})}\mu(\hat{0},Y)p_{\type(Y)} \quad \text{ and } \quad 
\overline{X}_{\Gamma} = \sum_{Y \in L(\mathcal{A}_{\Gamma})} \left|\mu(\hat{0},Y)\right| p_{\type(Y)}. 
\end{align*}
\end{theorem}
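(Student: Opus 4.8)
The plan is to combine the two power-sum expansions of $X_\Gamma$ (Theorem \ref{power sum expansion subset} via subsets, and the characteristic-polynomial route) with the arrangement-theoretic formulas proved in \S2. The cleanest path for both identities is to pass through the lattice $L(\mathcal{A}_\Gamma)$ and use Lemma \ref{power sum flat}, which already identifies $p_{\type(Y)}$ with $\sum_{\boldsymbol\alpha\in Y\cap\mathbb{Z}^\ell}x_{\boldsymbol\alpha}$. Once that identification is in hand, both claimed formulas are immediate consequences of Theorem \ref{XA} and Theorem \ref{oXA}, respectively.

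First I would prove the formula for $X_\Gamma$. Starting from Theorem \ref{XA}, we have
\begin{align*}
X_{\mathcal{A}_\Gamma} = \sum_{Y \in L(\mathcal{A}_\Gamma)} \mu(\hat{0},Y) \sum_{\boldsymbol\alpha \in Y \cap \mathbb{Z}^\ell} x_{\boldsymbol\alpha}.
\end{align*}
Since $X_\Gamma = X_{\mathcal{A}_\Gamma}$ by the definition of the chromatic signed-symmetric function in terms of chambers, substituting Lemma \ref{power sum flat} into the inner sum replaces $\sum_{\boldsymbol\alpha\in Y\cap\mathbb{Z}^\ell}x_{\boldsymbol\alpha}$ by $p_{\type(Y)}$, yielding the first identity directly.

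The second identity follows by the same substitution applied to Theorem \ref{oXA}: starting from
\begin{align*}
\overline{X}_{\mathcal{A}_\Gamma} = \sum_{Y \in L(\mathcal{A}_\Gamma)} \left|\mu(\hat{0},Y)\right| \sum_{\boldsymbol\alpha \in Y \cap \mathbb{Z}^\ell} x_{\boldsymbol\alpha},
\end{align*}
one again rewrites the inner geometric sum as $p_{\type(Y)}$ via Lemma \ref{power sum flat}, giving $\overline{X}_\Gamma = \sum_Y |\mu(\hat0,Y)|\,p_{\type(Y)}$. The key point justifying this whole approach is Proposition \ref{Zaslavsky lattice isomorphism}, which guarantees that $L(\mathcal{A}_\Gamma)\simeq L(\Gamma)$, so that $\type(Y)$ is well-defined and matches the combinatorial type of the corresponding flat $S$.

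\emph{The main obstacle} I anticipate is not the formal manipulation—which is essentially a one-line substitution once the pieces are assembled—but verifying that Lemma \ref{power sum flat} genuinely applies to \emph{every} $Y\in L(\mathcal{A}_\Gamma)$, including the subtle structure of unbalanced components where the coloring is forced to vanish. The real content was already absorbed into Lemma \ref{coloring edgeset} and the Harary decomposition (Theorem \ref{Harary}), which ensure that the fibers $K_S$ carry exactly the type data encoded by $\type(Y)$. Thus the burden of this theorem rests on those earlier lemmas, and the proof itself amounts to recognizing that Theorems \ref{XA} and \ref{oXA} are the arrangement-side shadows of the two desired expansions.
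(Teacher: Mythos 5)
Your proposal is correct and matches the paper's own proof exactly: the paper derives Theorem \ref{power sum expansion flat} as an immediate consequence of Theorem \ref{XA}, Theorem \ref{oXA}, and Lemma \ref{power sum flat}, precisely the substitution $\sum_{\boldsymbol{\alpha} \in Y \cap \mathbb{Z}^{\ell}} x_{\boldsymbol{\alpha}} = p_{\type(Y)}$ you describe. Your remarks correctly locate the real content in the earlier lemmas rather than in this theorem's one-line argument.
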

\begin{proof}
It follows immediately from Theorem \ref{XA}, \ref{oXA}, and Lemma \ref{power sum flat}. 
\end{proof}

\subsection{Relation with chromatic symmetric functions}

Recall the injection $ \iota \colon \Sym_{\mathbb{Q}} \to \SSym_{\mathbb{Q}} $ and the surjection $ \pi \colon \SSym_{\mathbb{Q}} \to \Sym_{\mathbb{Q}} $ (See Proposition \ref{injection surjection} and the above). 

Every simple graph $ (V,E) $ can be regarded as a signed graph $ (V,E,\varnothing, \varnothing) $. 
We have two functions $ X_{(V,E)} \in \Sym_{\mathbb{Q}} $ and $ X_{(V,E,\varnothing,\varnothing)} \in \SSym_{\mathbb{Q}} $. 
First we show that these functions are essentially the same functions. 

\begin{proposition}
$ \iota(X_{(V,E)}) = X_{(V,E,\varnothing,\varnothing)} $. 
\end{proposition}
\begin{proof}
This is immediately from \cite[Theorem 2.5]{stanley1995symmetric-aim} and Theorem \ref{power sum expansion subset}. 
\end{proof}

Given a signed graph $ \Gamma $, we define the \textbf{positive simple graph} of $ \Gamma $ by $ \Gamma^{+} \coloneqq (V_{\Gamma}, E_{\Gamma}^{+}) $. 

\begin{proposition}\label{projection}
$ \pi(X_{\Gamma}) = X_{\Gamma^{+}} $. 
\end{proposition}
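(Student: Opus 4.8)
The plan is to work directly with the coloring definition of $ X_{\Gamma} $ rather than with the power sum expansion. Recall that by definition $ X_{\Gamma} = \sum_{\kappa} x_{\kappa} $, where the sum is over all proper colorings $ \kappa \colon V_{\Gamma} \to \mathbb{Z} $ and $ x_{\kappa} = \prod_{v \in V_{\Gamma}} x_{\kappa(v)} $. Since $ \pi $ sends $ x_{i} $ to $ 0 $ for every $ i \leq 0 $ and fixes $ x_{i} $ for $ i > 0 $, the image $ \pi(x_{\kappa}) $ equals $ x_{\kappa} $ when $ \kappa(v) > 0 $ for all $ v $, and equals $ 0 $ as soon as some $ \kappa(v) \leq 0 $. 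Applying $ \pi $ term by term therefore gives
\begin{align*}
\pi(X_{\Gamma}) = \sum_{\substack{\kappa \colon V_{\Gamma} \to \mathbb{Z}_{>0} \\ \kappa \text{ proper for } \Gamma}} x_{\kappa}.
\end{align*}

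The key observation I would then make is that, for a coloring taking only positive values, the defining conditions of properness for $ \Gamma $ collapse to the single positive-edge condition. Indeed, if $ \kappa(v), \kappa(w) > 0 $ then $ \kappa(v) \neq -\kappa(w) $ automatically, so condition (2) for negative edges is vacuous; likewise $ \kappa(v) \neq 0 $ is automatic, so condition (3) for loops is vacuous. Hence a strictly positive coloring is proper for $ \Gamma $ if and only if $ \kappa(v) \neq \kappa(w) $ for every $ \{v,w\} \in E_{\Gamma}^{+} $, which is precisely the condition that $ \kappa $ be a proper coloring of the positive simple graph $ \Gamma^{+} = (V_{\Gamma}, E_{\Gamma}^{+}) $. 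Substituting this equivalence into the displayed sum identifies it with Stanley's defining series $ X_{\Gamma^{+}} = \sum_{\kappa \colon V_{\Gamma} \to \mathbb{Z}_{>0} \text{ proper}} x_{\kappa} $, which completes the argument.

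There is no serious obstacle here; the only points requiring care are the bookkeeping that $ \pi $ annihilates exactly the monomials involving some nonpositive index, and the verification that the negative-edge and loop constraints impose nothing on strictly positive colorings. As an alternative route that parallels the proof of the preceding proposition, one could instead apply $ \pi $ to the power sum expansion of Theorem \ref{power sum expansion subset}: since $ \pi(x_{0}) = 0 $ and $ \pi(p_{\begin{psmallmatrix} a \\ b \end{psmallmatrix}}) = 0 $ whenever both $ a, b > 0 $, the term indexed by $ S $ survives only when $ \Gamma_{S} $ has no unbalanced component and no balanced component containing a negative edge, that is, only when $ S \subseteq E_{\Gamma}^{+} $. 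For such $ S $ one has $ \pi(p_{\type(S)}) = p_{a_{1}} \cdots p_{a_{r}} $, where the $ a_{j} $ are the sizes of the connected components of $ (V_{\Gamma}, S) $, and the resulting sum is exactly Stanley's power sum expansion \cite[Theorem 2.5]{stanley1995symmetric-aim} of $ X_{\Gamma^{+}} $.
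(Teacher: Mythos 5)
Your proposal is correct, and your primary argument takes a genuinely different route from the paper. The paper proves the statement at the level of the power sum basis: it applies $\pi$ to the expansion of Theorem \ref{power sum expansion subset}, observes that $\pi(p_{\type(S)})=0$ whenever $S$ contains a negative edge or a loop (since then either the unbalanced part forces a factor $x_{0}^{u}$ with $u>0$, or some balanced component has both $a_{j},b_{j}>0$), and matches the surviving terms $S\subseteq E_{\Gamma}^{+}$ with Stanley's power sum expansion \cite[Theorem 2.5]{stanley1995symmetric-aim} of $X_{\Gamma^{+}}$ --- which is, in slightly more detail, exactly your \emph{alternative} route. Your main argument instead works monomial-by-monomial from the coloring definition: since $\pi$ is defined by the substitution $x_{i}\mapsto 0$ for $i\leq 0$, it annihilates precisely the monomials $x_{\kappa}$ of colorings taking a nonpositive value, and a strictly positive coloring trivially satisfies the negative-edge condition $\kappa(v)\neq-\kappa(w)$ and the loop condition $\kappa(v)\neq 0$, so properness for $\Gamma$ reduces to properness for $\Gamma^{+}$. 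This is sound (the one point needing care, that the substitution acts term-by-term on the formal power series, you flag explicitly), and it buys something real: independence from both Theorem \ref{power sum expansion subset} and Stanley's expansion, plus a transparent combinatorial explanation of \emph{why} the negative edges and loops become invisible under $\pi$. The paper's proof, by contrast, buys brevity given the machinery already built and keeps the argument parallel to the adjacent proposition $\iota(X_{(V,E)})=X_{(V,E,\varnothing,\varnothing)}$, where the basis-level comparison is the natural one; note that an analogous direct-coloring argument would \emph{not} work for that companion statement, since $\iota$ is not induced by any substitution of variables, which is presumably why the paper uniformly uses the power sum route for both.
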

\begin{proof}
Let $ S \subseteq E_{\Gamma} $. 
When $ S $ contains a negative edge or a loop, $ \pi(p_{\type(S)})=0 $. 
Otherwise, $ S \subseteq E_{\Gamma}^{+} $. 
Therefore the assertion holds by \cite[Theorem 2.5]{stanley1995symmetric-aim} and Theorem \ref{power sum expansion subset}. 
\end{proof}

\subsection{Combinatorial reciprocity (Proof of Theorem \ref{main theorem reciprocity})}

\begin{lemma}[Zaslavsky {\cite[Theorem 5.1(j)]{zaslavsky1982signed-dam}}]\label{Zaslavsky rank}
The rank of a flat $ S \in L(\Gamma) $ equals $ |V_{\Gamma}| - b(S) $, where $ b(S) $ denotes the number of balanced connected components of $ \Gamma_{S} $. 
In particular, if $ \Gamma $ is a signed graph on $ [\ell] $, then for every $ Y \in L(\mathcal{A}_{\Gamma}) $, we have $ \codim Y = \ell - b(Y) $, where $ b(Y) = b(S) $ and $ S \in L(\Gamma) $ denotes the corresponding flat. 
\end{lemma}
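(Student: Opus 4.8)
The plan is to compute the matroid rank of $ S $ directly from the connected-component structure of $ \Gamma_{S} $, and then transport the result to the arrangement through the lattice isomorphism of Proposition \ref{Zaslavsky lattice isomorphism}. First I would observe that the rank function of the frame matroid is additive over the connected components of $ \Gamma_{S} $. Indeed, each of the three kinds of circuits — a balanced cycle, a tight handcuff, or a loose handcuff — is a connected subgraph, so no circuit can meet two distinct connected components of $ \Gamma_{S} $. Hence the restriction of the frame matroid to $ S $ is the direct sum of the frame matroids of its connected components, and $ \operatorname{rank}(S) = \sum_{C} \operatorname{rank}(C) $, where $ C $ runs over the connected components of $ \Gamma_{S} $.

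Next I would compute the rank of a single connected component $ C $ on $ m $ vertices, claiming it equals $ m-1 $ if $ C $ is balanced and $ m $ if $ C $ is unbalanced. I would argue via maximal independent subsets. A spanning tree $ \mathcal{T} $ of $ C $ has $ m-1 $ edges and contains no cycle, hence no circuit, so it is independent. If $ C $ is balanced, then every cycle contained in it is balanced, i.e.\ a circuit; consequently every independent subset is acyclic, forcing $ \operatorname{rank}(C) = m-1 $. If $ C $ is unbalanced, it contains an unbalanced cycle, so some $ e \notin \mathcal{T} $ closes an unbalanced fundamental cycle; then $ \mathcal{T} \cup \{e\} $ carries exactly one cycle, which is unbalanced and thus neither a balanced cycle nor part of a handcuff, so it is independent, giving $ \operatorname{rank}(C) \geq m $. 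For the reverse inequality I would show that any $ m+1 $ edges of $ C $ are dependent, since they contain two distinct cycles.

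The main obstacle is precisely this last point, namely the independence characterization of the frame matroid: an edge set is independent if and only if each of its connected components contains at most one cycle and, when present, that cycle is unbalanced. The delicate case is proving that two cycles inside a connected subgraph are always jointly dependent — if either closes a balanced cycle we obtain a circuit at once, while two unbalanced cycles must together contain a tight or loose handcuff, or a balanced cycle, as a sub-circuit. Locating this sub-circuit among two overlapping unbalanced cycles is the heart of Zaslavsky's frame-matroid theory, and I expect this to be the step requiring the most care.

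Finally I would assemble the pieces using the decomposition $ V_{\Gamma} = U \sqcup A_{1} \sqcup B_{1} \sqcup \dots \sqcup A_{r} \sqcup B_{r} $ fixed in the excerpt. The $ b(S) = r $ balanced components contribute $ \sum_{j=1}^{r}\left(|A_{j}| + |B_{j}| - 1\right) = \left(|V_{\Gamma}| - |U|\right) - r $, while the unbalanced components contribute $ |U| $, so $ \operatorname{rank}(S) = |V_{\Gamma}| - r = |V_{\Gamma}| - b(S) $. For the arrangement statement, Proposition \ref{Zaslavsky lattice isomorphism} identifies $ L(\Gamma) $ with $ L(\mathcal{A}_{\Gamma}) $ and the matroid rank of $ S $ with the codimension of the corresponding flat $ Y \subseteq \mathbb{R}^{\ell} $, whence $ \codim Y = \ell - b(Y) $. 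I note in passing that the component-wise computation never uses that $ S $ is a flat, so the rank formula in fact holds for every edge subset, with flats as a special case.
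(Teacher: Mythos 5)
The paper gives no proof of this lemma at all: it is quoted directly from Zaslavsky's paper (Theorem 5.1(j) of the cited work), so there is no internal argument to compare against. Your proposal is, in effect, a reconstruction of the standard proof from Zaslavsky's signed-graph theory, and its outline is correct: circuits (balanced cycles, tight and loose handcuffs) are connected, so the frame matroid restricted to $S$ is the direct sum of the matroids of the connected components of $\Gamma_{S}$; a balanced component on $m$ vertices has rank $m-1$, an unbalanced one rank $m$; summing over components gives $|V_{\Gamma}|-b(S)$, with isolated vertices correctly absorbed as rank-$0$ balanced components. The passage to the arrangement is also fine, either via Proposition \ref{Zaslavsky lattice isomorphism} (an isomorphism of geometric lattices preserves rank, and in $L(\mathcal{A}_{\Gamma})$ rank is codimension) or, more directly, because the paper realizes the frame matroid as the linear dependence matroid of the normal vectors of $\mathcal{A}_{\Gamma}$, so matroid rank \emph{is} codimension. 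Your closing remark is accurate too: the rank formula holds for arbitrary $S \subseteq E_{\Gamma}$, and indeed that is how Zaslavsky states it; flats play no special role.

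Two points in the step you flagged as delicate should be pinned down to close the sketch. First, from $m+1$ edges in a component on $m$ vertices you do get cyclomatic number at least $2$, but a priori the two cycles could lie in \emph{different} components of the chosen edge set, where a pair of unbalanced cycles would be independent; the fix is a one-line count: if every component of the edge set carried at most one cycle, each would satisfy $e_{i} \leq n_{i}$, giving at most $m$ edges in total, a contradiction, so some single component contains two cycles. Second, for two unbalanced cycles inside one component: if they are vertex-disjoint, a connecting path yields a loose handcuff; if they meet in exactly one vertex, a tight handcuff; and if they overlap in more, their union contains a theta subgraph, where multiplicativity of signs shows the product of the signs of the three cycles is $+1$, so an even number of them are unbalanced and hence at least one is balanced, i.e.\ a circuit. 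With these two observations your argument is complete and matches what the cited Zaslavsky theorem actually proves.
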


\begin{theorem}[Restatement of Theorem \ref{main theorem reciprocity}]
Let $ \Gamma $ be a signed graph. 
Then $ \omega X_{\Gamma} = \overline{X}_{\Gamma} $. 
\end{theorem}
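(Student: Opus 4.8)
The plan is to apply the involution $\omega$ directly to the power sum expansion of $X_{\Gamma}$ established in Theorem \ref{power sum expansion flat} and to match the result term by term with the analogous expansion of $\overline{X}_{\Gamma}$. Since $\omega$ is a $\mathbb{Q}$-linear map on $\SSym_{\mathbb{Q}}$ and $X_{\Gamma} = \sum_{Y \in L(\mathcal{A}_{\Gamma})} \mu(\hat{0}, Y)\, p_{\type(Y)}$, it suffices to understand the effect of $\omega$ on a single term $p_{\type(Y)}$.

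First I would compute the sign produced by $\omega$ on one term. If $\type(Y) = (u, \lambda)$ with $\lambda = \begin{psmallmatrix} a_{1} & \cdots & a_{r} \\ b_{1} & \cdots & b_{r} \end{psmallmatrix}$, then by definition $\omega p_{\type(Y)} = (-1)^{u + \sum_{i=1}^{r}(a_{i}+b_{i}) - r}\, p_{\type(Y)}$. The key observation is that $u + \sum_{i=1}^{r}(a_{i}+b_{i})$ counts \emph{all} vertices of $\Gamma$: the integer $u = |U|$ is the number of vertices in the unbalanced components of $\Gamma_{S}$, while $\sum_{i}(a_{i}+b_{i}) = \sum_{i}|A_{i} \sqcup B_{i}|$ is the number of vertices in the balanced components, so the sum equals $\ell = |V_{\Gamma}|$. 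Moreover $r$ is exactly the number of balanced connected components, that is, $r = b(Y)$ in the notation of Lemma \ref{Zaslavsky rank}. Hence the exponent becomes $\ell - b(Y)$, which by Lemma \ref{Zaslavsky rank} is precisely $\codim Y$, giving $\omega p_{\type(Y)} = (-1)^{\codim Y}\, p_{\type(Y)}$.

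Next I would invoke the sign behaviour of the M\"{o}bius function on the geometric lattice $L(\mathcal{A}_{\Gamma})$. Since the rank of $Y$ in $L(\mathcal{A}_{\Gamma})$ equals $\codim Y$ and each cover flips the sign of $\mu(\hat{0}, \cdot)$ by Lemma \ref{Rota sign theorem}, we obtain $\mu(\hat{0}, Y) = (-1)^{\codim Y} |\mu(\hat{0}, Y)|$, equivalently $(-1)^{\codim Y}\mu(\hat{0}, Y) = |\mu(\hat{0}, Y)|$. Combining this with the previous paragraph I would conclude
\begin{align*}
\omega X_{\Gamma} = \sum_{Y} \mu(\hat{0}, Y)\, \omega p_{\type(Y)} = \sum_{Y} (-1)^{\codim Y}\mu(\hat{0}, Y)\, p_{\type(Y)} = \sum_{Y} |\mu(\hat{0}, Y)|\, p_{\type(Y)},
\end{align*}
which is exactly $\overline{X}_{\Gamma}$ by Theorem \ref{power sum expansion flat}.

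The conceptual heart of the argument, and the step I expect to require the most care, is the bookkeeping in the second paragraph: one must verify that the purely combinatorial exponent $u + \sum_{i}(a_{i}+b_{i}) - r$ appearing in the definition of $\omega$ coincides with the geometric quantity $\codim Y$. This is where Lemma \ref{Zaslavsky rank} is indispensable, as it is the bridge translating the balanced/unbalanced component count of $\Gamma_{S}$ into the rank of the corresponding flat. Everything else is formal once the two power sum expansions and Rota's sign theorem are in hand.
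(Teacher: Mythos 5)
Your proof is correct and follows essentially the same route as the paper: both apply $\omega$ to the power sum expansion of Theorem \ref{power sum expansion flat}, use Lemma \ref{Zaslavsky rank} to identify the sign $(-1)^{u+\sum_i(a_i+b_i)-r}$ with $(-1)^{\codim Y}$, and invoke Lemma \ref{Rota sign theorem} to absorb that sign into $|\mu(\hat{0},Y)|$. Your second paragraph merely makes explicit the bookkeeping ($u+\sum_i(a_i+b_i)=\ell$ and $r=b(Y)$) that the paper leaves implicit in its chain of equalities.
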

\begin{proof}
Using Lemma \ref{Rota sign theorem}, Theorem \ref{power sum expansion flat}, and Lemma \ref{Zaslavsky rank}, we obtain 
\begin{align*}
\omega X_{\Gamma} &= \sum_{Y \in L(\mathcal{A}_{\Gamma})}\mu(\hat{0},Y)\omega p_{\type(Y)} 
= \sum_{Y \in L(\mathcal{A}_{\Gamma})}\mu(\hat{0},Y)(-1)^{\ell-b(Y)} p_{\type(Y)} \\
&= \sum_{Y \in L(\mathcal{A}_{\Gamma})}\mu(\hat{0},Y)(-1)^{\codim Y} p_{\type(Y)} 
= \sum_{Y \in L(\mathcal{A}_{\Gamma})}\left|\mu(\hat{0},Y)\right| p_{\type(Y)} \\
&= \overline{X}_{\Gamma}. 
\end{align*}
\end{proof}

\begin{remark}
We can recover Stanley's combinatorial reciprocity (Theorem \ref{Stanley CR chromatic symmetric function}) by Proposition \ref{injection surjection} and \ref{projection}, and Theorem \ref{main theorem reciprocity}
\end{remark}

\subsection{Connectedness and irreducibility}
Cho and Willigenburg \cite{cho2016chromatic-tejoc} constructed systems of generators for $ \Sym_{\mathbb{Q}} $ consisting of chromatic symmetric functions of connected simple graphs. 
They essentially proved that a simple graph is connected if and only if the chromatic symmetric function is irreducible (See also \cite[Corollary 2.4]{tsujie2018chromatic-gac}). 
We can prove an analogue for chromatic signed-symmetric functions as follows. 

\begin{proposition}
Let $ \Gamma $ be a signed graph. 
Then $ \Gamma $ is connected if and only if the chromatic signed-symmetric function $ X_{\Gamma} $ is irreducible. 
\end{proposition}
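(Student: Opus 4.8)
The plan is to work in the graded polynomial ring $ \SSym_{\mathbb{Q}} = \mathbb{Q}[x_{0}, \{p_{\begin{psmallmatrix} a \\ b \end{psmallmatrix}}\}_{a \geq b}] $ furnished by Theorem \ref{power sum basis}, in which $ x_{0} $ has degree $ 1 $ and $ p_{\begin{psmallmatrix} a \\ b \end{psmallmatrix}} $ has degree $ a+b $. This is an integral domain whose units are exactly the nonzero scalars, and $ X_{\Gamma} $ is homogeneous of degree $ \ell = |V_{\Gamma}| $. The easy direction is immediate: if $ \Gamma $ is disconnected, write $ \Gamma = \Gamma_{1} \sqcup \Gamma_{2} $ with both vertex sets nonempty, so that $ X_{\Gamma} = X_{\Gamma_{1}}X_{\Gamma_{2}} $ by Proposition \ref{disjoint union product}; each factor is homogeneous of positive degree, hence a non-unit, and $ X_{\Gamma} $ is reducible.

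For the converse I would measure power-sum monomials by their number of generator factors: for a type $ (u,\lambda) $ with $ \lambda $ having $ r $ columns, the monomial $ p_{(u,\lambda)} = x_{0}^{u}\prod_{j} p_{\begin{psmallmatrix} a_{j} \\ b_{j} \end{psmallmatrix}} $ is a product of $ u+r $ generators. If $ X_{\Gamma} = fg $ with $ f,g $ non-units, then since $ X_{\Gamma} $ is homogeneous and $ \SSym_{\mathbb{Q}} $ is a graded domain, $ f $ and $ g $ may be taken homogeneous of degrees $ \geq 1 $; every power-sum monomial occurring in such an $ f $ or $ g $ uses at least one generator, so every monomial occurring in the product uses at least two. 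Hence it suffices to exhibit, for connected $ \Gamma $, a single-factor monomial occurring in $ X_{\Gamma} $ with nonzero coefficient. The only single-factor monomials of degree $ \ell \geq 2 $ are the single columns $ p_{\begin{psmallmatrix} a \\ b \end{psmallmatrix}} $ with $ a+b = \ell $ (the case $ \ell = 1 $ being trivial, since a degree-$1$ homogeneous element cannot split into non-units).

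I would locate such a term via the flat expansion $ X_{\Gamma} = \sum_{Y \in L(\mathcal{A}_{\Gamma})} \mu(\hat{0},Y)\, p_{\type(Y)} $ of Theorem \ref{power sum expansion flat}. A flat $ Y $ contributes a single column $ \begin{psmallmatrix} a \\ b \end{psmallmatrix} $ of degree $ \ell $ precisely when the corresponding $ \Gamma_{S} $ is balanced, connected, and spanning (so $ u = 0 $ and the balanced-component count $ b(Y) = 1 $); by Lemma \ref{Zaslavsky rank} these are exactly the flats of rank $ \ell-1 $ with no unbalanced component. At least one such flat exists for connected $ \Gamma $: if $ \Gamma $ is balanced, the top flat $ \hat{1} $ works; if $ \Gamma $ is unbalanced, I would take the flat closure $ \overline{T} $ of a spanning tree $ T $, observing that closing $ T $ only adjoins edges completing balanced cycles (a single unbalanced cycle is not a circuit), so $ \overline{T} $ retains the bipartition of $ T $ and stays balanced, connected, and spanning. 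Finally, to rule out cancellation I would invoke Rota's sign theorem (Lemma \ref{Rota sign theorem}): all flats of rank $ \ell-1 $ have $ \mu(\hat{0},Y) $ of the common sign $ (-1)^{\ell-1} $, and every flat contributing to a fixed single-column type has exactly that rank, so the coefficient of $ p_{\begin{psmallmatrix} a \\ b \end{psmallmatrix}} $ is a sum of nonzero terms of one sign, hence nonzero. This produces the required single-factor term and with it the irreducibility.

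The main obstacle is the connected unbalanced case: there the top flat contributes $ x_{0}^{\ell} $, which has $ \ell $ factors and is useless, so one must produce a genuine single-column flat and argue that its coefficient survives. Both points are handled above, the first by the spanning-tree closure and the second by Rota's sign theorem, and I expect the only delicate verification to be that the frame-matroid closure of a spanning tree indeed remains balanced.
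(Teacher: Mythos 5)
Your proof is correct and follows essentially the same route as the paper's: the easy direction via $ X_{\Gamma_{1}\sqcup\Gamma_{2}} = X_{\Gamma_{1}}X_{\Gamma_{2}} $ (Proposition \ref{disjoint union product}), and the converse by locating a single-column term $ p_{\begin{psmallmatrix} a \\ b \end{psmallmatrix}} $ in the flat expansion of Theorem \ref{power sum expansion flat}, whose coefficient survives because all contributing flats have rank $ \ell-1 $ (Lemma \ref{Zaslavsky rank}) and hence M\"{o}bius values of a common sign (Lemma \ref{Rota sign theorem}), and which no product of two homogeneous non-units in the free algebra of Theorem \ref{power sum basis} can contain. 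The only divergence is in how the witness flat is produced --- the paper takes a maximal balanced flat and asserts its connectivity, while you take the frame-matroid closure of a spanning tree --- and your variant actually spells out the balancedness and connectivity that the paper's one-line claim leaves implicit.
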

\begin{proof}
If $ X_{\Gamma} $ is irreducible, then $ \Gamma $ is connected by Proposition \ref{disjoint union product}. 
Now suppose that $ \Gamma $ is connected. 
Let $ S $ be a maximal balanced flat of $ \Gamma $. 
Since $ \Gamma $ is connected, $ \Gamma_{S} $ is connected. 
Therefore $ \type(S) $ is of the form $ \begin{pmatrix}
a \\ b
\end{pmatrix} $. 
By Lemma \ref{Rota sign theorem} and Theorem \ref{power sum expansion flat}, $ X_{\Gamma} $ contains a term $ \mu(\hat{0}, S)p_{\begin{psmallmatrix}
a \\ b
\end{psmallmatrix}} $ with $ \mu(\hat{0}, S) \neq 0 $. 
If $ T $ is another maximal balanced flat, then $ \mu(\hat{0}, S) $ and $ \mu(\hat{0}, T) $ have the same sign by Lemma \ref{Rota sign theorem} and \ref{Zaslavsky rank}. 
Therefore $ X_{\Gamma} $ contains a power sum signed-symmetric function of the form $ p_{\begin{psmallmatrix}
a \\ b
\end{psmallmatrix}} $, which cannot be obtained from a product of two signed-symmetric functions by Theorem \ref{power sum basis}. 
Thus $ X_{\Gamma} $ is irreducible. 
\end{proof}

\subsection{Signed paths (Proof of Theorem \ref{main signed paths})}

\begin{proposition}\label{signed tree components}
Let $ T $ be a signed tree. 
The number of subsets $ S \subseteq E_{T} $ such that $ T_{S} $ has $ r $ connected components and $ p $ connected components containing no negative edges is obtained from $ X_{T} $. 
Especially the type of $ E_{T} $ is determined by $ X_{T} $. 
\end{proposition}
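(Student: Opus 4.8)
The plan is to extract the desired counts directly from the power sum expansion of Theorem \ref{power sum expansion subset}, using the tree structure to undo the signs. First I would record two structural facts about a signed tree $T$ on $\ell = |V_T|$ vertices. Since the underlying graph of $T$ is a tree and $T$ has no loops, for every $S \subseteq E_T$ the subgraph $T_S$ is a forest; being acyclic it contains no cycle at all, so each connected component is (vacuously) balanced and the set $U$ of vertices in unbalanced components is empty. Hence $\type(S) = (0,\lambda)$ for some array $\lambda$, and by Harary's Theorem \ref{Harary} each component carries a bipartition $A_j \sqcup B_j$ producing the columns $\begin{psmallmatrix} a_j \\ b_j \end{psmallmatrix}$ of $\lambda$. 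The second fact is the elementary forest identity: if $T_S$ has $r$ components then $|S| = \ell - r$, so $r$ equals the number of columns of $\lambda$. Note also that $\ell$ is recoverable from $X_T$ as the common degree of its monomials.

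The heart of the argument is that the forest identity makes the sign $(-1)^{|S|} = (-1)^{\ell - r}$ in Theorem \ref{power sum expansion subset} constant on each type class. Grouping the expansion $X_T = \sum_{S \subseteq E_T}(-1)^{|S|}p_{\type(S)}$ by type, the coefficient of $p_{(0,\lambda)}$ with $\lambda$ having $r$ columns equals $(-1)^{\ell - r}$ times $n_\lambda := \#\{S \subseteq E_T : \type(S) = (0,\lambda)\}$. Because $\{p_{(u,\lambda)}\}$ is a $\mathbb{Q}$-basis of $\SSym_{\mathbb{Q}}$ (Theorem \ref{power sum basis}), this coefficient can be read off from $X_T$, and since the sign is known, each $n_\lambda$ is thereby determined by $X_T$.

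It remains to translate $n_\lambda$ into the stated quantities. A component of $T_S$ contains no negative edge exactly when one part of its Harary bipartition is empty, i.e.\ when the corresponding column $\begin{psmallmatrix} a_j \\ b_j \end{psmallmatrix}$ has a zero entry. Hence the number of subsets $S$ with $r$ components and $p$ negative-edge-free components is $\sum_\lambda n_\lambda$, where $\lambda$ ranges over arrays with $r$ columns exactly $p$ of which have a zero entry; this sum is computable from $X_T$. For the type of $E_T$ itself, I would observe that $T_S$ has a single component only when $S = E_T$ (a spanning connected subforest of a tree is the whole tree), so among the one-column arrays exactly one has $n_\lambda = 1$, and it equals $\type(E_T)$.

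The step I expect to be the main obstacle is the sign-inversion of the middle paragraph: everything hinges on $|S|$ being a function of $\type(S)$, which is precisely what the forest identity $|S| = \ell - r$ guarantees. For a general signed graph this fails, since subsets of the same type may have different cardinalities, so the signed sum of Theorem \ref{power sum expansion subset} would not separate into honest subset counts; the tree hypothesis is exactly what makes the recovery possible.
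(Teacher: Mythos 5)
Your proposal is correct and matches the paper's proof essentially step for step: both rest on the facts that $T_S$ is a forest (so every component is balanced and $|S| = |V_T| - r$ makes the sign $(-1)^{|S|}$ constant on each type class), read off the coefficients $c_\lambda$ in the power sum expansion of Theorem \ref{power sum expansion subset}, identify negative-edge-free components with zero-entry columns, and recover $\type(E_T)$ from the unique one-column type, attained only at $S = E_T$. Your version merely makes explicit a few points the paper leaves implicit (linear independence of the $p_{(u,\lambda)}$, recovering $\ell$ from the degree, and why the sign-inversion fails for general signed graphs).
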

\begin{proof}
Let $ S \subseteq E_{T} $. 
Then every connected component of $ T_{S} $ is balanced. 
Suppose that $ T_{S} $ has $ r $ connected components and $ p $ connected components containing no negative edges. 
Then $ \type(S) $ is of the form $ \begin{pmatrix}
a_{1} & \cdots & a_{p} & a_{p+1} & \cdots & a_{r} \\
0 & \cdots & 0 & b_{p+1} & \cdots & b_{r}
\end{pmatrix} $ with $ a_{i} \neq 0 \ (1 \leq i \leq r) $ and $ b_{i} \neq 0 \ (p+1 \leq i \leq r) $ and $ |S| = |V_{T}|-r $. 

Let $ X_{T} = \sum_{\lambda}c_{\lambda}p_{\lambda} $. 
Theorem \ref{power sum expansion subset} shows that the sum $ \sum_{\lambda}|c_{\lambda}| $, where $ \lambda $ runs over all partitions of the form $ \begin{pmatrix}
a_{1} & \cdots & a_{p} & a_{p+1} & \cdots & a_{r} \\
0 & \cdots & 0 & b_{p+1} & \cdots & b_{r}
\end{pmatrix} $ with $ a_{i} \neq 0 \ (1 \leq i \leq r) $ and $ b_{i} \neq 0 \ (p+1 \leq i \leq r) $, is the desired number. 

Moreover $ T_{S} $ is connected if and only if the type of $ S $ is of the form $ \begin{pmatrix}
a \\ b
\end{pmatrix} $. 
This happens only when $ S = E_{T} $ and hence the type of $ E_{T} $ is obtained from $ X_{T} $. 
\end{proof}

Let $ \boldsymbol{\alpha} = (\alpha_{1}, \dots, \alpha_{\ell}) \in \mathbb{Z}_{>0}^{\ell} $ denote an integer composition. 
Recall $ P_{\boldsymbol{\alpha}} $ denotes the signed path obtained by connecting paths consisting of positive edges $ P_{\alpha_{1}}, \dots, P_{\alpha_{\ell}} $ with negative edges in this order. 
Note that $ P_{\boldsymbol{\alpha}} $ is isomorphic to $ P_{\boldsymbol{\beta}} $ if and only if $ \boldsymbol{\alpha} \in \{\boldsymbol{\beta}, \boldsymbol{\beta}^{r}\} $, where $ \boldsymbol{\beta}^{r} \coloneqq (\beta_{m}, \dots, \beta_{1}) $ if the \textbf{reverse} of a composition $ \boldsymbol{\beta} = (\beta_{1}, \dots, \beta_{m}) $. 

\begin{lemma}\label{signed path partition type}
Let $ \boldsymbol{\alpha} = (\alpha_{1}, \dots, \alpha_{\ell}) $ and $ \boldsymbol{\beta} = (\beta_{1}, \dots, \beta_{m}) $ be compositions. 
Suppose that $ X_{P_{\boldsymbol{\alpha}}} = X_{P_{\boldsymbol{\beta}}} $. 
Then $ \ell = m $ and there exists a permutation $ \sigma $ such that $ \alpha_{i} = \beta_{\sigma(i)} $ for any $ i \in \{1, \dots \ell\} $. 
\end{lemma}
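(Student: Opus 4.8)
The plan is to recover the multiset $\{\alpha_{1}, \dots, \alpha_{\ell}\}$ from $X_{P_{\boldsymbol{\alpha}}}$ by isolating the contribution of the positive edges. Since $P_{\boldsymbol{\alpha}}$ is a signed tree, every connected component of every $(P_{\boldsymbol{\alpha}})_{S}$ is balanced, so $\type(S) = (0,\lambda)$, and all subsets $S$ with a fixed type $\lambda$ share the cardinality $|S| = |V_{P_{\boldsymbol{\alpha}}}| - r$, where $r$ is the number of columns of $\lambda$. Hence, exactly as in the proof of Proposition \ref{signed tree components}, Theorem \ref{power sum expansion subset} together with the linear independence of $\{p_{(u,\lambda)}\}$ (Theorem \ref{power sum basis}) shows that $N_{\lambda} \coloneqq \#\Set{S \subseteq E_{P_{\boldsymbol{\alpha}}} | \type(S) = \lambda}$ is the absolute value of the coefficient of $p_{(0,\lambda)}$ in $X_{P_{\boldsymbol{\alpha}}}$, and so is determined by $X_{P_{\boldsymbol{\alpha}}}$ for every $\lambda$. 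First I would restrict attention to those $\lambda$ all of whose columns have the form $\begin{psmallmatrix} c \\ 0 \end{psmallmatrix}$: a component of $(P_{\boldsymbol{\alpha}})_{S}$ has a column with a zero entry precisely when it contains no negative edge, so these $\lambda$ are exactly the types of the subsets $S \subseteq E_{P_{\boldsymbol{\alpha}}}^{+}$, and such a $\lambda$ records nothing but the multiset of component sizes, that is, an integer partition $\nu$ of $N \coloneqq |V_{P_{\boldsymbol{\alpha}}}|$.

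Next I would package this positive-edge data as a generating function. Introducing commuting indeterminates $y_{1}, y_{2}, \dots$, where $y_{c}$ marks a component on $c$ vertices, set $F_{\boldsymbol{\alpha}}(\boldsymbol{y}) \coloneqq \sum_{\nu} N_{\nu}\, \boldsymbol{y}^{\nu}$, the sum over partition-types $\nu$, where $\boldsymbol{y}^{\nu}$ is the product of $y_{c}$ over the parts $c$ of $\nu$. By the previous paragraph $F_{\boldsymbol{\alpha}}$ is determined by $X_{P_{\boldsymbol{\alpha}}}$. When $S \subseteq E_{P_{\boldsymbol{\alpha}}}^{+}$ every negative edge is deleted, so the blocks are separated and $S$ chooses an arbitrary subset of the $\alpha_{i}-1$ positive edges inside block $i$, cutting that block into a composition of $\alpha_{i}$. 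Since the blocks are independent, $F_{\boldsymbol{\alpha}}$ factors as
\begin{align*}
F_{\boldsymbol{\alpha}}(\boldsymbol{y}) = \prod_{i=1}^{\ell} G_{\alpha_{i}}(\boldsymbol{y}),
\qquad
G_{\alpha}(\boldsymbol{y}) \coloneqq \sum_{(d_{1}, \dots, d_{k})} y_{d_{1}} \cdots y_{d_{k}},
\end{align*}
where the inner sum ranges over all compositions $(d_{1}, \dots, d_{k})$ of $\alpha$.

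Finally I would show that this factorization is unique. The polynomial ring $\mathbb{Q}[y_{1}, y_{2}, \dots]$ is a unique factorization domain, and $G_{\alpha} = y_{\alpha} + (\text{terms in } y_{1}, \dots, y_{\alpha-1})$ is monic of degree one in $y_{\alpha}$, hence irreducible; moreover $G_{\alpha}$ and $G_{\alpha'}$ are non-associate for $\alpha \neq \alpha'$, since only $G_{\alpha}$ contains the monomial $y_{\alpha}$. Therefore the multiset of irreducible factors of $F_{\boldsymbol{\alpha}}$ is exactly $\{G_{\alpha_{1}}, \dots, G_{\alpha_{\ell}}\}$, and the bijection $\alpha \mapsto G_{\alpha}$ turns this into the multiset $\{\alpha_{1}, \dots, \alpha_{\ell}\}$. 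Consequently $X_{P_{\boldsymbol{\alpha}}} = X_{P_{\boldsymbol{\beta}}}$ forces $F_{\boldsymbol{\alpha}} = F_{\boldsymbol{\beta}}$, so $\{\alpha_{i}\} = \{\beta_{j}\}$ as multisets; in particular $\ell = m$ and a permutation $\sigma$ with $\alpha_{i} = \beta_{\sigma(i)}$ exists. The main point to get right is the identification $F_{\boldsymbol{\alpha}} = \prod_{i} G_{\alpha_{i}}$ and the irreducibility of the factors; once these are in hand, unique factorization does the rest.
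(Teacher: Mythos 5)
Your argument is correct, but it follows a genuinely different route from the paper's. The paper's proof is short given its earlier machinery: it applies the projection $ \pi $ of Proposition \ref{projection} to deduce $ X_{P_{\boldsymbol{\alpha}}^{+}} = X_{P_{\boldsymbol{\beta}}^{+}} $, factors $ X_{P_{\boldsymbol{\alpha}}^{+}} = X_{P_{\alpha_{1}}} \cdots X_{P_{\alpha_{\ell}}} $ over the disjoint union of positive blocks, invokes the Cho--van Willigenburg result that the chromatic symmetric function of a connected simple graph is irreducible, and concludes via unique factorization in $ \Sym_{\mathbb{Q}} $ together with a degree count. You never leave $ \SSym_{\mathbb{Q}} $: from the power sum expansion of Theorem \ref{power sum expansion subset}, using that $ |S| = |V| - r $ is constant on each type for a signed tree (exactly the mechanism of Proposition \ref{signed tree components}), you recover the counts $ N_{\nu} $ of positive-edge subsets by component-size partition, encode them in the auxiliary polynomial $ F_{\boldsymbol{\alpha}} = \prod_{i} G_{\alpha_{i}} $, and then apply unique factorization in an ordinary polynomial ring, where irreducibility of $ G_{\alpha} = y_{\alpha} + (\text{terms in } y_{1}, \dots, y_{\alpha-1}) $ is elementary (primitive and of degree one in $ y_{\alpha} $). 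The two proofs are structurally parallel --- both are one-irreducible-factor-per-block UFD arguments --- but yours is more self-contained, replacing the cited nontrivial irreducibility theorem for connected chromatic symmetric functions by a trivial linearity observation, at the cost of introducing the generating polynomial $ F_{\boldsymbol{\alpha}} $ instead of factoring the symmetric functions themselves. The one step that deserves the explicit justification you give is that a type all of whose columns contain a zero entry arises precisely from $ S \subseteq E_{\Gamma}^{+} $: a component containing a negative edge has both parts of its Harary decomposition (Theorem \ref{Harary}) nonempty, so such types are cleanly separated and the extraction of $ F_{\boldsymbol{\alpha}} $ from $ X_{P_{\boldsymbol{\alpha}}} $ is airtight.
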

\begin{proof}
Since $ X_{P_{\boldsymbol{\alpha}}} = X_{P_{\boldsymbol{\beta}}} $, we have $ X_{P_{\boldsymbol{\alpha}}^{+}} = X_{P_{\boldsymbol{\beta}}^{+}} $ by Proposition \ref{projection}. 
The positive simple graph $ P_{\boldsymbol{\alpha}}^{+} $ is the disjoint union $ P_{\boldsymbol{\alpha}}^{+} = P_{\alpha_{1}} \sqcup \dots \sqcup P_{\alpha_{\ell}} $. 
Hence we have $ X_{P_{\boldsymbol{\alpha}}^{+}} = X_{P_{\alpha_{1}}} \cdots X_{P_{\alpha_{\ell}}} $ and every $ X_{\alpha_{i}} $ is irreducible. 
The same holds for $ P_{\boldsymbol{\beta}} $. 

Since $ \Sym_{\mathbb{Q}} $ is a unique factorization domain, we have $ \ell = m $ and there exists a permutation $ \sigma $ such that $ X_{P_{\alpha_{i}}} = X_{P_{\beta_{\sigma(i)}}} $ for any $ i \in \{1, \dots, \ell\} $. 
The degrees of the both sides yield $ \alpha_{i} = \beta_{\sigma(i)} $. 
\end{proof}

\begin{lemma}\label{signed path length at most 4}
Suppose that $ X_{P_{\boldsymbol{\alpha}}} = X_{P_{\boldsymbol{\beta}}} $ and the length of $ \alpha $ is less than or equal to $ 4 $. 
Then $ P_{\boldsymbol{\alpha}} $ and $ P_{\boldsymbol{\beta}} $ are isomorphic. 
\end{lemma}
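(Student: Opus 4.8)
The plan is to start from Lemma \ref{signed path partition type}, which already does much of the work: from $X_{P_{\boldsymbol{\alpha}}}=X_{P_{\boldsymbol{\beta}}}$ it yields $\ell=m$ and that $\boldsymbol{\beta}$ is a rearrangement of $\boldsymbol{\alpha}$, so the two compositions share the same multiset of parts. Since $P_{\boldsymbol{\beta}}$ is isomorphic to $P_{\boldsymbol{\beta}^{r}}$, what remains is to recover the linear order of the parts \emph{up to reversal}. For $\ell\le 2$ there is nothing to do: a length-$1$ composition is unique, and a length-$2$ composition together with its only nontrivial rearrangement are reverses of one another. So the real content is $\ell=3$ and $\ell=4$, and I would extract the order from refined coefficients of $X$. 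The engine is Theorem \ref{power sum expansion subset}: since $P_{\boldsymbol{\beta}}$ is a tree, every $S\subseteq E$ is a forest and hence balanced, so $\type(S)=(0,\lambda(S))$ with $\lambda(S)$ a $2\times r$ array, where $r$ is the number of components and $|S|=|V|-r$. Grouping subsets by type, and using linear independence of the $p_{(0,\lambda)}$ (Theorem \ref{power sum basis}), the coefficient of $p_{(0,\lambda)}$ in $X_{P_{\boldsymbol{\beta}}}$ equals $(-1)^{|V|-r}$ times $N(\lambda):=\#\{S:\type(S)=\lambda\}$; thus every $N(\lambda)$ is recoverable from $X$. Two families matter: the unique one-column type is $\type(E)=\binom{O}{E}$ with $O=\sum_{i\ \mathrm{odd}}\beta_{i}$ and $E=\sum_{i\ \mathrm{even}}\beta_{i}$ (the Harary bipartition from Theorem \ref{Harary} alternates block by block), and the two-column types are exactly the single-edge deletions $E\setminus\{e\}$, whose two columns are the types of the two path-pieces; a column $\binom{s}{0}$ occurs precisely when that piece contains no negative edge, i.e.\ lies inside one block.

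For $\ell=3$ I would use only $\type(E)=\{\beta_{2},\ \beta_{1}+\beta_{3}\}$ (unordered) together with the known multiset $\{\beta_{1},\beta_{2},\beta_{3}\}$. The middle part $\beta_{2}$ lies in the multiset while $\beta_{1}+\beta_{3}$ does not: equality of $\beta_{1}+\beta_{3}$ with some $\beta_{i}$ would force a zero part, except for the harmless case $\beta_{2}=\beta_{1}+\beta_{3}$, in which $\beta_{2}$ is still pinned down as the unique part equal to half the total. Hence $\beta_{2}$, and therefore the composition up to reversal, is determined.

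For $\ell=4$ the path has three boundary edges and the argument has two steps. First, among all single-edge deletions the positive-only pieces (columns $\binom{s}{0}$) are exactly the prefixes inside block $1$ and the suffixes inside block $4$, so the multiset of their sizes is $\{1,\dots,\beta_{1}\}\sqcup\{1,\dots,\beta_{4}\}$; from this I recover the unordered pair of end sizes $\{\beta_{1},\beta_{4}\}$, and then the middle sizes $\{\beta_{2},\beta_{3}\}$ as the complementary sub-multiset. Second, to fix which end sits next to which middle I use $\type(E)=\{\beta_{1}+\beta_{3},\ \beta_{2}+\beta_{4}\}$: writing $\{\beta_{1},\beta_{4}\}=\{p,q\}$ and $\{\beta_{2},\beta_{3}\}=\{r,s\}$, the two compositions still compatible with the first step give the pairs $\{p+s,\ q+r\}$ and $\{p+r,\ q+s\}$, which are unequal whenever $p\neq q$ and $r\neq s$. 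When instead $p=q$ or $r=s$, the two candidate compositions are reverses of each other (or literally equal), hence isomorphic, so no ambiguity survives. Combining the three recovered invariants pins down $\boldsymbol{\beta}$ up to reversal.

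I expect the main obstacle to be exactly this $\ell=4$ disentangling. No single invariant suffices—the multiset of parts or $\type(E)$ alone has genuine collisions among non-isomorphic orderings—so one must combine the end/middle split read off from the positive-only pieces of single-edge deletions with the parity pairing recorded by $\type(E)$, and then verify that the only potential coincidences are either forbidden by positivity of the parts or are themselves reversals. The bookkeeping separating boundary-edge deletions from interior positive-edge deletions is where the care is needed; everything else follows from Lemma \ref{signed path partition type}, Proposition \ref{signed tree components}, and Theorem \ref{power sum expansion subset}.
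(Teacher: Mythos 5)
Your proof is correct, and it reaches the conclusion by extracting genuinely different invariants from $X_{P_{\boldsymbol{\beta}}}$ than the paper does, even though both arguments share the same skeleton: start from Lemma \ref{signed path partition type} and, for $\ell=4$, finish with the one-column type $\type(E)=\begin{psmallmatrix}\beta_{1}+\beta_{3}\\ \beta_{2}+\beta_{4}\end{psmallmatrix}$ and the same algebraic disambiguation (your observation that $\{p+s,\,q+r\}=\{p+r,\,q+s\}$ forces $p=q$ or $r=s$ is exactly the paper's case analysis yielding $\alpha_{1}=\alpha_{4}$ or $\alpha_{2}=\alpha_{3}$, the reversal-ambiguous cases). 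The difference lies in the middle step. The paper recovers the \emph{middle} pair $\{\alpha_{2},\alpha_{3}\}$ by counting, via Proposition \ref{signed tree components}, subsets $S$ all of whose components contain a negative edge: $\alpha_{2}-1$ two-component subsets for $\ell=3$ (which settles that case outright), and for $\ell=4$ the counts $(\alpha_{2}-1)(\alpha_{3}-1)$ and $\alpha_{2}+\alpha_{3}-1$, giving the product and sum of $\alpha_{2},\alpha_{3}$. You instead recover the \emph{end} pair $\{\beta_{1},\beta_{4}\}$ from the multiset $\{1,\dots,\beta_{1}\}\sqcup\{1,\dots,\beta_{4}\}$ of sizes of positive-only pieces among single-edge deletions (your bookkeeping here checks out: deleting the $j$-th edge of block $1$ gives prefixes of sizes $1,\dots,\beta_{1}-1$, the first negative edge gives size $\beta_{1}$, and no other deletion produces a $\begin{psmallmatrix}s\\0\end{psmallmatrix}$ column since any other piece contains a negative edge), and for $\ell=3$ you pin down $\beta_{2}$ from $\type(E)=\begin{psmallmatrix}\beta_{2}\\ \beta_{1}+\beta_{3}\end{psmallmatrix}$ together with the part multiset, with the coincidence $\beta_{2}=\beta_{1}+\beta_{3}$ correctly dispatched by positivity. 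One point worth flagging: your route needs the full per-type counts $N(\lambda)$ rather than only the $(r,p)$-counts stated in Proposition \ref{signed tree components}, but your justification — every $S$ in a signed tree is balanced with $|S|=|V|-r$ determined by the number of columns of $\type(S)$, so the coefficient of $p_{(0,\lambda)}$ is $(-1)^{|V|-r}N(\lambda)$, and Theorem \ref{power sum basis} gives linear independence — is exactly right, and is in substance how the paper proves that proposition anyway. The paper's choice buys economy (only the stated proposition is invoked, and the $\ell=3$ case needs a single count); yours buys a more transparent picture of where the end blocks sit, at the cost of slightly heavier bookkeeping on single-edge deletions.
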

\begin{proof}
If the length of $ \alpha $ is $ 1 $, then the assertion holds trivially. 
Suppose that the length of $ \alpha $ is $ 2 $. 
Since $ P_{(\alpha_{1}, \alpha_{2})} $ is isomorphic to $ P_{(\alpha_{2}, \alpha_{1})} $, the assertion holds by Lemma \ref{signed path partition type}. 

Next we suppose that the length of $ \alpha $ is $ 3 $. 
Let $ S \subseteq E_{T} $ such that $ T_{S} $ consists of $ 2 $ connected components and the both components contain a negative edge. 
Such a subset $ S $ is obtained only by $ S = E_{T}\setminus\{e\} $, where $ e $ is an edge of $ P_{\alpha_{2}} $ and hence the number of such subsets is $ \alpha_{2}-1 $. 
Therefore by Proposition \ref{signed tree components} and Lemma \ref{signed path partition type}, we have $ \alpha_{2}-1=\beta_{2}-1 $ and $ \boldsymbol{\alpha} \in \{\boldsymbol{\beta}, \boldsymbol{\beta}^{r}\} $. 
Thus $ P_{\boldsymbol{\alpha}} $ is isomorphic to $ P_{\boldsymbol{\beta}} $. 

Finally we assume that the length of $ \alpha $ is $ 4 $. 
Let $ S \subseteq E_{T} $. 
The subgraph $ T_{S} $ consists of $ 3 $ connected components which contain a negative edge if and only if there exists $ e \in E_{P_{\alpha_{2}}} $ and $ e^{\prime} \in E_{P_{\alpha_{3}}} $ such that $ S = E_{T}\setminus\{e,e^{\prime}\} $. 
Therefore the number of such a subset $ S $ is $ (\alpha_{2}-1)(\alpha_{3}-1) $. 

The subgraph $ T_{S} $ consists of $ 2 $ connected components which contain a negative edge if and only if $ S=E_{T}\setminus\{e\} $, where $ e $ is an edge of $ P_{\alpha_{2}} $ or $ P_{\alpha_{3}} $ or $ e $ is the edge between $ P_{\alpha_{2}} $ and $ P_{\alpha_{3}} $. 
Hence the number of such a subset $ S $ is $ (\alpha_{2}-1) + (\alpha_{3}-1) + 1 = \alpha_{2}+\alpha_{3}-1 $. 

By Proposition \ref{signed tree components} and Lemma \ref{signed path partition type}, we have $ (\alpha_{2}-1)(\alpha_{3}-1) = (\beta_{2}-1)(\beta_{3}-1) $ and $ \alpha_{2}+\alpha_{3}-1 = \beta_{2}+\beta_{3}-1 $. 
Hence $ \alpha_{2}+\alpha_{3} = \beta_{2}+\beta_{3} $ and $ \alpha_{2}\alpha_{3} = \beta_{2}\beta_{3} $. 
Therefore $ \{\alpha_{2}, \alpha_{3}\} = \{\beta_{2}, \beta_{3}\} $. 
Without loss of generality, we may assume that $ \alpha_{2}=\beta_{2} $ and $ \alpha_{3}=\beta_{3} $. 

When $ \alpha_{1} = \beta_{1} $ and $ \alpha_{4} = \beta_{4} $, we have nothing to prove. 
Hence we assume that $ \alpha_{1} = \beta_{4} $ and $ \alpha_{4} = \beta_{1} $. 
By Proposition \ref{signed tree components}, the type $ \begin{pmatrix}
\alpha_{1} + \alpha_{3} \\
\alpha_{2} + \alpha_{4}
\end{pmatrix} $ coincides with $ \begin{pmatrix}
\beta_{1} + \beta_{3} \\
\beta_{2} + \beta_{4}
\end{pmatrix} = \begin{pmatrix}
\alpha_{4} + \alpha_{3} \\
\alpha_{2} + \alpha_{1}
\end{pmatrix} $. 

If $ \alpha_{1}+\alpha_{3} = \alpha_{4}+\alpha_{3} $, then $ \alpha_{1} = \alpha_{4} $ and if $ \alpha_{1}+\alpha_{3} = \alpha_{2}+\alpha_{1} $, then $ \alpha_{2}=\alpha_{3} $. 
In the both cases, $ \boldsymbol{\alpha} \in \{\boldsymbol{\beta}, \boldsymbol{\beta}^{r}\} $. 
Therefore $ P_{\boldsymbol{\alpha}} $ is isomorphic to $ P_{\boldsymbol{\beta}} $. 
\end{proof}

\begin{lemma}\label{signed path unimodal}
Suppose that $ X_{P_{\boldsymbol{\alpha}}} = X_{P_{\boldsymbol{\beta}}} $ and both of $ \boldsymbol{\alpha} $ and $ \boldsymbol{\beta} $ are unimodal.  
Then $ P_{\boldsymbol{\alpha}} $ is isomorphic to $ P_{\boldsymbol{\beta}} $. 
\end{lemma}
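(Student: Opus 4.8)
The plan is to reduce the statement to a purely combinatorial reconstruction problem and then solve that problem using the unimodality hypothesis. First I would invoke Lemma~\ref{signed path partition type}: since $X_{P_{\boldsymbol{\alpha}}}=X_{P_{\boldsymbol{\beta}}}$, we have $\ell=m$ and $\boldsymbol{\alpha},\boldsymbol{\beta}$ are rearrangements of one common multiset of parts. Recalling that $P_{\boldsymbol{\alpha}}\cong P_{\boldsymbol{\beta}}$ exactly when $\boldsymbol{\alpha}\in\{\boldsymbol{\beta},\boldsymbol{\beta}^{r}\}$, the task becomes: two \emph{unimodal} compositions of the same multiset of parts that give the same chromatic signed-symmetric function must coincide up to reversal. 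The lengths $\ell\le 4$ are already settled by Lemma~\ref{signed path length at most 4}, so I would treat those as a base case and try to argue by induction on $\ell$, peeling parts off the two ends of the path.

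The extraction tool is Theorem~\ref{power sum expansion subset}. For a signed tree every edge subset $S$ is balanced, so $\type(S)=(0,\lambda)$ with $\lambda$ having exactly as many columns as $T_{S}$ has components; since $T$ is a forest, $|S|=|V_{T}|-(\text{number of components})$, and hence the sign $(-1)^{|S|}$ in Theorem~\ref{power sum expansion subset} depends only on the number of columns of $\lambda$. Consequently the absolute value of the coefficient of $p_{\lambda}$ in $X_{T}$ equals the number of $S\subseteq E_{T}$ with $\type(S)=\lambda$ — this is precisely the mechanism behind Proposition~\ref{signed tree components}, now used at the level of full type data rather than just the pair $(r,p)$. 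I would single out the subsets obtained by deleting a single edge and leaving two components each containing a negative edge: as in the length $\le 4$ analysis, these correspond to cutting the underlying vertex-path at a position strictly interior to the blocks $B_{2},\dots,B_{\ell-1}$, and the recorded pair of types registers, for each such cut, the bipartition $\{\text{odd-block sum},\text{even-block sum}\}$ of the prefix and of the suffix, equivalently the size of each piece together with the alternating partial sum at the cut. The extreme admissible cut sizes, combined with the known total $|V_{T}|$, recover the unordered endpoint pair $\{\alpha_{1},\alpha_{\ell}\}$.

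The reconstruction step is where unimodality enters. I would track how the (unordered) bipartition type of the growing prefix evolves as the cut sweeps from left to right: the prefix size increases by one at each step, while the bipartition difference changes by $\pm 1$ according to the parity of the block currently being crossed, with breakpoints occurring exactly at the block boundaries $\alpha_{1},\alpha_{1}+\alpha_{2},\dots$. The plan is to show that, for a unimodal composition, these breakpoints together with the known multiset of parts and the known endpoint pair determine the assignment of each part to the ascending side or the descending side of the peak, and therefore fix the composition up to reversal. Comparing the statistics extracted from $X_{P_{\boldsymbol{\alpha}}}$ and $X_{P_{\boldsymbol{\beta}}}$ then forces $\boldsymbol{\alpha}\in\{\boldsymbol{\beta},\boldsymbol{\beta}^{r}\}$, whence $P_{\boldsymbol{\alpha}}\cong P_{\boldsymbol{\beta}}$.

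The main obstacle is that every recovered quantity is only an \emph{unordered} multiset, and each individual bipartition type $\binom{a}{b}$ is itself unordered, so the reconstruction must simultaneously resolve three ambiguities: the order in which the pieces occur along the path, the interchange $a\leftrightarrow b$ inside each type, and the coincidences produced by repeated parts. This is exactly the point at which unimodality should be decisive: I expect the monotonicity it imposes to orient the sweep and to pin down a unique interleaving of the ascending and descending runs, thereby breaking all three ambiguities at once. The crux — and where I anticipate essentially all the effort to lie — is proving rigorously that two genuinely different unimodal rearrangements of a single multiset must differ in at least one of these extractable type-counts.
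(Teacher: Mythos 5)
Your reductions up to the extraction step are sound, and they match the paper's toolkit: Lemma~\ref{signed path partition type} gives that $\boldsymbol{\alpha}$ and $\boldsymbol{\beta}$ are rearrangements of one multiset, and for a signed tree every $S \subseteq E_{T}$ is balanced with $|S| = |V_{T}| - r$, where $r$ is the number of columns of $\type(S)$, so no cancellation occurs in Theorem~\ref{power sum expansion subset} and the two-column part of the power sum expansion does record, with multiplicity, the types $\type(E_{T} \setminus \{e\})$ of all single-edge cuts. The genuine gap is the final step, which you flag yourself: you never prove that two unimodal arrangements of the same multiset that are not reverses of each other must differ in this cut-type multiset, and that claim \emph{is} the lemma. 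Nothing in the proposal supplies a mechanism. The data is weaker than your sweep picture suggests: each cut contributes only an unordered pair of unordered columns, so the cut at prefix size $m$ and the cut at prefix size $N-m$ carry identical size information and cannot be matched to ``prefix'' versus ``suffix''; the sign of the alternating block-sum is lost inside each column $\begin{psmallmatrix} a \\ b \end{psmallmatrix}$; and repeated parts create further collisions. ``Orienting the sweep'' by unimodality is stated as an expectation, not an argument, and recovering the breakpoint sequence from an unordered multiset of unordered pairs is precisely the hard content you defer. As written, the proposal establishes nothing beyond what Proposition~\ref{signed tree components} already gives.

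The paper avoids this reconstruction problem entirely, and the contrast is instructive. It inducts on $\ell$ and peels the \emph{peak} rather than the ends: since $\boldsymbol{\alpha}$ is unimodal, the maximal value $\alpha_{t}$ occupies one consecutive run, and a component of $T_{S}$ has type column $\begin{psmallmatrix} \alpha_{t} \\ 0 \end{psmallmatrix}$ only if it is a full peak block (a zero entry forces no negative edges, and all other blocks are strictly smaller). Collecting the terms of Theorem~\ref{power sum expansion subset} carrying the maximal power of $p_{\begin{psmallmatrix} \alpha_{t} \\ 0 \end{psmallmatrix}}$ yields $X_{P_{\boldsymbol{\alpha}}} = p_{\begin{psmallmatrix} \alpha_{t} \\ 0 \end{psmallmatrix}}^{k} X_{P_{\boldsymbol{\alpha}'} \sqcup P_{\boldsymbol{\alpha}''}} + f$ with $p_{\begin{psmallmatrix} \alpha_{t} \\ 0 \end{psmallmatrix}}^{k} \nmid f$, where $\boldsymbol{\alpha}'$, $\boldsymbol{\alpha}''$ flank the run; equating these coefficients for $\boldsymbol{\alpha}$ and $\boldsymbol{\beta}$ and using Proposition~\ref{disjoint union product}, the irreducibility of the chromatic signed-symmetric function of a connected signed graph, and the fact that $\SSym_{\mathbb{Q}}$ is a polynomial ring (Theorem~\ref{power sum basis}), hence a UFD, splits the product equality into $X_{P_{\boldsymbol{\alpha}'}} = X_{P_{\boldsymbol{\beta}'}}$ and $X_{P_{\boldsymbol{\alpha}''}} = X_{P_{\boldsymbol{\beta}''}}$ after relabeling. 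Induction applies, and since $\boldsymbol{\alpha}'$, $\boldsymbol{\alpha}''$ are monotone, ``isomorphic up to reversal'' collapses to equality of compositions. In other words, unique factorization does the disambiguation work that your sweep argument would have to do by hand. To salvage your route you would have to prove the reconstruction lemma outright, which looks at least as hard as the target statement; otherwise adopt the peeling argument, for which your base case via Lemma~\ref{signed path length at most 4} is not even needed ($\ell = 1$ suffices).
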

\begin{proof}
We proceed by induction on the length $ \ell $ of $ \alpha $. 
Clearly, the assertion holds for $ \ell = 1 $. 
Hence we assume $ \ell \geq 2 $.  
Since $ \boldsymbol{\alpha} = (\alpha_{1}, \dots, \alpha_{\ell}) $ is unimodal, there exist an index $ t $ and a positive integer $ k $ such that $ \alpha_{1} \leq \dots \leq \alpha_{t-1} < \alpha_{t} = \dots = \alpha_{t+k} > \alpha_{t+k+1} \geq \dots \geq \alpha_{\ell} $. 
Let $ \boldsymbol{\alpha}^{\prime} = (\alpha_{1}, \dots, \alpha_{t-1}) $ and $ \boldsymbol{\alpha}^{\prime\prime} = (\alpha_{t+k+1}, \dots, \alpha_{\ell}) $. 
Then by Theorem \ref{power sum expansion subset} there exists $ f \in \SSym_{\mathbb{Q}} $ such that 
\begin{align*}
X_{P_{\boldsymbol{\alpha}}} = p_{\begin{psmallmatrix}
\alpha_{t} \\ 0
\end{psmallmatrix}}^{k} \ X_{P_{\boldsymbol{\alpha}^{\prime}}\sqcup P_{\boldsymbol{\alpha}^{\prime\prime}}} + f \qquad \text{ and } \qquad p_{\begin{psmallmatrix}
\alpha_{t} \\ 0
\end{psmallmatrix}}^{k} \nmid f. 
\end{align*}
Since $ X_{P_{\boldsymbol{\alpha}}} = X_{P_{\boldsymbol{\beta}}} $ and $ \boldsymbol{\beta} $ is also unimodal, we have $ X_{P_{\boldsymbol{\alpha}^{\prime}}\sqcup P_{\boldsymbol{\alpha}^{\prime\prime}}} = X_{P_{\boldsymbol{\beta}^{\prime}}\sqcup P_{\boldsymbol{\beta}^{\prime\prime}}} $, where $ \boldsymbol{\beta}^{\prime} = (\beta_{1}, \dots, \beta_{s-1}) $ and $ \boldsymbol{\beta}^{\prime\prime} = (\beta_{s+k+1}, \dots, \beta_{\ell}) $ for some $ s $ and $ \beta_{s} = \dots = \beta_{s+k} = \alpha_{t} $. 
Therefore $ X_{P_{\boldsymbol{\alpha}^{\prime}}}X_{P_{\boldsymbol{\alpha}^{\prime\prime}}} = X_{P_{\boldsymbol{\beta}^{\prime}}}X_{P_{\boldsymbol{\beta}^{\prime\prime}}} $. 

Without loss of generality, we may assume that $ X_{P_{\boldsymbol{\alpha}^{\prime}}} = X_{P_{\boldsymbol{\beta}^{\prime}}} $ and $ X_{P_{\boldsymbol{\alpha}^{\prime\prime}}} = X_{P_{\boldsymbol{\beta}^{\prime\prime}}} $ and will prove that $ \alpha = \beta $. 
By the induction hypothesis, $ P_{\boldsymbol{\alpha}^{\prime}} $ is isomorphic to $ P_{\boldsymbol{\beta}^{\prime}} $ and $ P_{\boldsymbol{\alpha}^{\prime\prime}} $ is isomorphic to $ P_{\boldsymbol{\beta}^{\prime\prime}} $. 
Since these compositions are monotonic, $ \boldsymbol{\alpha}^{\prime} = \boldsymbol{\beta}^{\prime} $ and $ \boldsymbol{\alpha}^{\prime\prime} = \boldsymbol{\beta}^{\prime\prime} $. 
Therefore $ \boldsymbol{\alpha} = \boldsymbol{\beta} $. 
\end{proof}

\begin{theorem}[Restatement of Theorem \ref{main signed paths}]
Suppose that $ X_{P_{\boldsymbol{\alpha}}} = X_{P_{\boldsymbol{\beta}}} $. 
If one of the following conditions holds, then $ P_{\boldsymbol{\alpha}} $ and $ P_{\boldsymbol{\beta}} $ are isomorphic. 
\begin{enumerate}[(1)]
\item The length of $ \boldsymbol{\alpha} $ is less than or equal to $ 4 $. 
\item $ \boldsymbol{\alpha} $ and $ \boldsymbol{\beta} $ are unimodal. 
\end{enumerate}
\end{theorem}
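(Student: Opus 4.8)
The plan is to observe that the two conditions in the statement are exactly the hypotheses of the two lemmas already established, so the theorem assembles directly from them by a case split. For part (1), Lemma \ref{signed path length at most 4} supplies the isomorphism whenever $\boldsymbol{\alpha}$ has length at most $4$; for part (2), Lemma \ref{signed path unimodal} supplies it whenever both $\boldsymbol{\alpha}$ and $\boldsymbol{\beta}$ are unimodal. The proof of the theorem itself is therefore one line, and all the genuine content lives in those two lemmas.

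The backbone common to both lemmas is that $X_{P_{\boldsymbol{\alpha}}}$ records enough combinatorial data to recover the multiset of block sizes. Concretely, I would first invoke Lemma \ref{signed path partition type}: applying the projection $\pi$ (Proposition \ref{projection}) collapses $P_{\boldsymbol{\alpha}}$ onto the disjoint union of the positive paths $P_{\alpha_{1}}, \dots, P_{\alpha_{\ell}}$, and since $\Sym_{\mathbb{Q}}$ is a unique factorization domain in which each $X_{P_{\alpha_{i}}}$ is irreducible, the multisets $\{\alpha_{i}\}$ and $\{\beta_{i}\}$ must coincide. This reduces both lemmas to pinning down the order of the blocks up to reversal.

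For the length-at-most-$4$ case I would extract finer invariants from the power sum expansion (Theorem \ref{power sum expansion subset}) through Proposition \ref{signed tree components}, namely the counts of subsets $S \subseteq E_{T}$ whose deletion yields a prescribed number of components each containing a negative edge. These counts are symmetric in $\alpha_{2}, \alpha_{3}$ (they produce $\alpha_{2}+\alpha_{3}$ and $(\alpha_{2}-1)(\alpha_{3}-1)$), which fix the inner entries as an unordered pair; a single comparison of the type of $E_{T}$ then resolves the remaining reversal ambiguity. For the unimodal case I would induct on the length, peeling off the maximal blocks by factoring out the highest power $p_{\begin{psmallmatrix} \alpha_{t} \\ 0 \end{psmallmatrix}}^{k}$ dividing $X_{P_{\boldsymbol{\alpha}}}$, thereby splitting the composition into two shorter unimodal pieces to which the induction hypothesis applies; monotonicity of each piece forces termwise equality.

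Since the assembly step is immediate, the main obstacle sits entirely inside the lemmas. In the length-$4$ case the delicate point is ruling out the spurious matching $\alpha_{1}=\beta_{4}$, $\alpha_{4}=\beta_{1}$, which the coarse component counts alone cannot exclude and which requires the auxiliary type identity for $E_{T}$. In the unimodal case the subtlety is confirming that the remainder $f$ in the factorization is genuinely not divisible by $p_{\begin{psmallmatrix} \alpha_{t} \\ 0 \end{psmallmatrix}}^{k}$, so that the induction cleanly separates the two sides; this is where the structure of the power sum basis (Theorem \ref{power sum basis}) does the essential work.
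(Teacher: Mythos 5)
Your proposal is correct and takes essentially the same route as the paper: the theorem itself is proved by the same one-line case split citing Lemma \ref{signed path length at most 4} and Lemma \ref{signed path unimodal}, and your sketches of those lemmas (projection via Proposition \ref{projection} plus unique factorization in $\Sym_{\mathbb{Q}}$ to get the multiset of blocks, the component counts together with the type of $E_{T}$ to break the reversal ambiguity when $\ell \leq 4$, and induction peeling off the maximal blocks in the unimodal case) mirror the paper's arguments. The only trivial slip is that the count of subsets giving two components each containing a negative edge is $\alpha_{2}+\alpha_{3}-1$ rather than $\alpha_{2}+\alpha_{3}$, which carries the same information and does not affect the argument.
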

\begin{proof}
This is a direct consequence of Lemma \ref{signed path length at most 4} and \ref{signed path unimodal}. 
\end{proof}

\bibliographystyle{amsplain}
\bibliography{bibfile}

\providecommand{\bysame}{\leavevmode\hbox to3em{\hrulefill}\thinspace}
\providecommand{\MR}{\relax\ifhmode\unskip\space\fi MR }
\providecommand{\MRhref}[2]{%
  \href{http://www.ams.org/mathscinet-getitem?mr=#1}{#2}
}
\providecommand{\href}[2]{#2}
\begin{thebibliography}{10}

\bibitem{aliste-prieto2017trees-dm}
J.~Aliste-Prieto, A.~de~Mier, and J.~Zamora, \emph{On trees with the same
  restricted {U}-polynomial and the
  {Prouhet}{\textendash}{Tarry}{\textendash}{Escott} problem}, Discrete
  Mathematics \textbf{340} (2017), no.~6, 1435--1441.

\bibitem{aliste-prieto2014proper-dm}
J.~Aliste-Prieto and J.~Zamora, \emph{Proper caterpillars are distinguished by
  their chromatic symmetric function}, Discrete Mathematics \textbf{315-316}
  (2014), 158--164.

\bibitem{beck2006inside-out-aim}
M.~Beck and T.~Zaslavsky, \emph{Inside-out polytopes}, Advances in Mathematics
  \textbf{205} (2006), no.~1, 134--162.

\bibitem{brosnan2018unit-aim}
P.~Brosnan and T.~Y. Chow, \emph{Unit interval orders and the dot action on the
  cohomology of regular semisimple {Hessenberg} varieties}, Advances in
  Mathematics \textbf{329} (2018), 955--1001.

\bibitem{chmutov2020b-symmetric}
S.~Chmutov, \emph{{$B$}-symmetric chromatic function of signed graphs}, 2020,
  Combinatorics of Vassiliev invariants.

\bibitem{cho2019positivity-a}
S.~Cho and J.~Hong, \emph{Positivity of chromatic symmetric functions
  associated with {Hessenberg} functions of bounce number 3}, arXiv: 1910.07308
  (2019).

\bibitem{cho2019$e$-positivity-sjodm}
S.~Cho and J.~Huh, \emph{On {$e$}-{Positivity} and {$e$}-{Unimodality} of
  {Chromatic} {Quasi}-symmetric {Functions}}, SIAM Journal on Discrete
  Mathematics \textbf{33} (2019), no.~4, 2286--2315.

\bibitem{cho2020$e$-positivity-sldc}
\bysame, \emph{On {$e$}-{Positivity} and {$e$}-{Unimodality} of {Chromatic}
  {Quasi}-symmetric {Functions}}, S{\'e}minaire Lotharingien de Combinatoire
  \textbf{80B} (2020), \#59, 12 pp.

\bibitem{cho2016chromatic-tejoc}
S.~Cho and S.~van Willigenburg, \emph{Chromatic {Bases} for {Symmetric}
  {Functions}}, The Electronic Journal of Combinatorics \textbf{23} (2016),
  no.~1, P1.15.

\bibitem{dahlberg2018triangular-a}
S.~Dahlberg, \emph{Triangular {Ladders} {${P}_{d,2}$} are {$e$}-positive},
  arXiv: 1811.04885 (2018).

\bibitem{dahlberg2019new-sldc}
\bysame, \emph{A new formula for {Stanley}{\textquoteright}s chromatic
  symmetric function for unit interval graphs and {$e$}-positivity for
  triangular ladder graphs}, S{\'e}minaire Lotharingien de Combinatoire
  \textbf{82B} (2019), \#52, 12 pp.

\bibitem{dahlberg2020resolving-jotems}
S.~Dahlberg, A.~Foley, and S.~van Willigenburg, \emph{Resolving {Stanley}'s
  {$e$}-positivity of claw-contractible-free graphs}, Journal of the European
  Mathematical Society \textbf{22} (2020), no.~8, 2673--2696.

\bibitem{dahlberg2018lollipop-sjodm}
S.~Dahlberg and S.~van Willigenburg, \emph{Lollipop and {Lariat} {Symmetric}
  {Functions}}, SIAM Journal on Discrete Mathematics \textbf{32} (2018), no.~2,
  1029--1039.

\bibitem{egge2016chromatic}
E.~Egge, \emph{A {Chromatic} {Symmetric} {Function} for {Signed} {Graphs}},
  2016, AMS Special Session on Combinatorics of Symmetric Functions.

\bibitem{foley2019classes-tejoc}
A.~M. Foley, C.~T. Ho{\`a}ng, and O.~D. Merkel, \emph{Classes of {Graphs} with
  {$e$}-{Positive} {Chromatic} {Symmetric} {Function}}, The Electronic Journal
  of Combinatorics \textbf{26} (2019), no.~3, P3.51.

\bibitem{greene1983interpretation-totams}
C.~Greene and T.~Zaslavsky, \emph{On the interpretation of {Whitney} numbers
  through arrangements of hyperplanes, zonotopes, non-{Radon} partitions, and
  orientations of graphs}, Transactions of the American Mathematical Society
  \textbf{280} (1983), no.~1, 97--97.

\bibitem{guay-paquet2013modular-a}
M.~Guay-Paquet, \emph{A modular relation for the chromatic symmetric functions
  of (3+1)-free posets}, arXiv:1306.2400 (2013), arXiv: 1306.2400.

\bibitem{guay-paquet2016second-a}
\bysame, \emph{A second proof of the {Shareshian}--{Wachs} conjecture, by way
  of a new {Hopf} algebra}, arXiv:1601.05498 (2016).

\bibitem{harada2018cohomology-sldc}
M.~Harada and M.~E. Precup, \emph{The cohomology of abelian {Hessenberg}
  varieties and the {Stanley}{\textendash}{Stembridge} conjecture},
  S{\'e}minaire Lotharingien de Combinatoire \textbf{80B} (2018), \#49, 12 pp.

\bibitem{harada2019cohomology-ac}
\bysame, \emph{The cohomology of abelian {Hessenberg} varieties and the
  {Stanley}{\textendash}{Stembridge} conjecture}, Algebraic Combinatorics
  \textbf{2} (2019), no.~6, 1059--1108.

\bibitem{harary1953notion-tmmj}
F.~Harary, \emph{On the notion of balance of a signed graph.}, The Michigan
  Mathematical Journal \textbf{2} (1953), no.~2, 143--146.

\bibitem{heil2019algorithm-ajc}
S.~Heil and C.~Ji, \emph{On an algorithm for comparing the chromatic symmetric
  functions of trees}, AUSTRALAS. J. COMBIN. (2019), 13.

\bibitem{huryn2020few-iajom}
J.~Huryn and S.~Chmutov, \emph{A few more trees the chromatic symmetric
  function can distinguish}, Involve, a Journal of Mathematics \textbf{13}
  (2020), no.~1, 109--116.

\bibitem{loebl2019isomorphism-adlhpd}
M.~Loebl and J.~Sereni, \emph{Isomorphism of weighted trees and {Stanley}'s
  isomorphism conjecture for caterpillars}, Annales de
  l{\textquoteright}Institut Henri Poincar{\'e} D \textbf{6} (2019), no.~3,
  357--384.

\bibitem{macdonald1971polynomials-jotlms}
I.~G. MacDonald, \emph{Polynomials {Associated} with {Finite}
  {Cell}-{Complexes}}, Journal of the London Mathematical Society \textbf{s2-4}
  (1971), no.~1, 181--192.

\bibitem{martin2008distinguishing-joctsa}
J.~L. Martin, M.~Morin, and J.~D. Wagner, \emph{On distinguishing trees by
  their chromatic symmetric functions}, Journal of Combinatorial Theory, Series
  A \textbf{115} (2008), no.~2, 237--253.

\bibitem{mcmullen1978lattice-adm}
P.~McMullen, \emph{Lattice invariant valuations on rational polytopes}, Archiv
  der Mathematik \textbf{31} (1978), no.~1, 509--516.

\bibitem{orellana2014graphs-dm}
R.~Orellana and G.~Scott, \emph{Graphs with equal chromatic symmetric
  functions}, Discrete Mathematics \textbf{320} (2014), 1--14.

\bibitem{rota1964foundations-zfwuvg}
G.~. Rota, \emph{On the foundations of combinatorial theory {I}. {Theory} of
  {M{\"o}bius} {Functions}}, Zeitschrift f{\"u}r Wahrscheinlichkeitstheorie und
  Verwandte Gebiete \textbf{2} (1964), no.~4, 340--368.

\bibitem{shareshian2016chromatic-aim}
J.~Shareshian and M.~L. Wachs, \emph{Chromatic quasisymmetric functions},
  Advances in Mathematics \textbf{295} (2016), no.~Supplement C, 497--551.

\bibitem{smith2015symmetric-a}
I.~Smith, Z.~Smith, and P.~Tian, \emph{Symmetric chromatic polynomial of
  trees}, arXiv:1505.01889 (2015).

\bibitem{stanley1973acyclic-dm}
R.~P. Stanley, \emph{Acyclic orientations of graphs}, Discrete Mathematics
  \textbf{5} (1973), no.~2, 171--178.

\bibitem{stanley1995symmetric-aim}
\bysame, \emph{A {Symmetric} {Function} {Generalization} of the {Chromatic}
  {Polynomial} of a {Graph}}, Advances in Mathematics \textbf{111} (1995),
  no.~1, 166--194.

\bibitem{stanley1999enumerative}
\bysame, \emph{Enumerative {Combinatorics}, {Volume} 2}, Cambridge University
  Press, Cambridge ; New York, January 1999.

\bibitem{stanley1993immanants-joctsa}
R.~P. Stanley and J.~R. Stembridge, \emph{On immanants of {Jacobi}-{Trudi}
  matrices and permutations with restricted position}, Journal of Combinatorial
  Theory, Series A \textbf{62} (1993), no.~2, 261--279.

\bibitem{tsujie2018chromatic-gac}
S.~Tsujie, \emph{The {Chromatic} {Symmetric} {Functions} of {Trivially}
  {Perfect} {Graphs} and {Cographs}}, Graphs and Combinatorics \textbf{34}
  (2018), no.~5, 1037--1048.

\bibitem{wolfgang1997two}
H.~L. Wolfgang, \emph{Two interactions between combinatorics and representation
  theory : monomial immanants and {Hochschild} cohomology}, Thesis,
  Massachusetts Institute of Technology, 1997.

\bibitem{zaslavsky1982signed-dm}
T.~Zaslavsky, \emph{Signed graph coloring}, Discrete Mathematics \textbf{39}
  (1982), no.~2, 215--228.

\bibitem{zaslavsky1982signed-dam}
\bysame, \emph{Signed graphs}, Discrete Applied Mathematics \textbf{4} (1982),
  no.~1, 47--74.

\bibitem{zaslavsky1989biased-joctsb}
\bysame, \emph{Biased graphs. {I}. {Bias}, balance, and gains}, Journal of
  Combinatorial Theory, Series B \textbf{47} (1989), no.~1, 32--52.

\bibitem{zaslavsky1991biased-joctsb}
\bysame, \emph{Biased graphs. {II}. {The} three matroids}, Journal of
  Combinatorial Theory, Series B \textbf{51} (1991), no.~1, 46--72.

\bibitem{zaslavsky1995biased-joctsb}
\bysame, \emph{Biased graphs. {III}. {Chromatic} and dichromatic {Invariants}},
  Journal of Combinatorial Theory, Series B \textbf{64} (1995), no.~1, 17--88.

\bibitem{zaslavsky2001supersolvable-ejoc}
\bysame, \emph{Supersolvable {Frame}-matroid and {Graphic}-lift {Lattices}},
  European Journal of Combinatorics \textbf{22} (2001), no.~1, 119--133.

\bibitem{zaslavsky2003biased-joctsb}
\bysame, \emph{Biased graphs {IV}: {Geometrical} realizations}, Journal of
  Combinatorial Theory, Series B \textbf{89} (2003), no.~2, 231--297.

\end{thebibliography}


\end{document}